\newtheorem{theorem}{Theorem}[section]
\newtheorem{corollary}[theorem]{Corollary}
\newtheorem{lemma}[theorem]{Lemma}
\newtheorem{proposition}[theorem]{Proposition}
\newtheorem{question}{Question}[section]
\theoremstyle{definition}
\newtheorem{definition}[theorem]{Definition}
\theoremstyle{remark}
\newtheorem{remark}[theorem]{Remark}
\theoremstyle{example}
\newcommand{\be}{\begin{equation}}
\newcommand{\ee}{\end{equation}}
\newcommand{\bea}{\begin{eqnarray}}
\newcommand{\eea}{\end{eqnarray}}
\newcommand{\ben}{\begin{eqnarray*}}
\newcommand{\een}{\end{eqnarray*}}
\newcommand{\bet}{\begin{equation}
\begin{split}}
\newcommand{\eet}{\end{split}
\end{equation}}
\DeclareMathOperator{\Ext}{\mathscr{E}\text{\kern -3pt {\calligra\Large xt}}\,\,}
\begin{document}

\title[Grauert-Riemenschneider multiplier ideals and the Brian\c{c}on-Skoda number]
      {Grauert-Riemenschneider multiplier ideal sheaves and the (optimal) Brian\c{c}on-Skoda number}

\author{Zhenqian Li}

\date{\today}
\subjclass[2010]{13B22, 14F18, 32B05, 32S05, 32U05}
\thanks{\emph{Key words}. Integral closure of ideals, multiplier ideal sheaves, weakly rational singularities, plurisubharmonic functions, Skoda's $L^2$ division theorem}
\thanks{E-mail: lizhenqian@amss.ac.cn}

\begin{abstract}
The goal of this note is to survey some recent results on the Grauert-Riemenschneider multiplier ideal sheaves on any (reduced) complex space of pure dimension. In particular, we obtain the Brian\c{c}on-Skoda number for any Noetherian ring of weakly holomorphic functions with weakly rational singularities (\emph{not} essentially of finite type over $\mathbb{C}$ and Cohen-Macaulay local rings), which will partially answer a question of Huneke.
\end{abstract}

\maketitle

\section{Introduction}

In \cite{Li_BS}, the author introduced a notion of Grauert-Riemenschneider multiplier ideal sheaves on any (reduced) complex space of pure dimension (\emph{not} necessarily normal or $\mathbb{Q}$-Gorenstein), motivated by the definition of Grauert-Riemenschneider canonical sheaf. Relying on the new multiplier ideals, the author established the original Brian\c{c}on-Skoda theorem for analytic local rings with normal weakly rational singularities. In the present note, we are going to investigate the Grauert-Riemenschneider multiplier ideals further and prove some related properties (e.g., the weak holomorphicity and Skoda-type division theorem, etc.). As applications, following the idea adopted in \cite{Li_BS}, we also obtain the Brian\c{c}on-Skoda number for Noetherian rings of \emph{weakly holomorphic functions} (i.e., germs of locally bounded \emph{meromorphic} functions on singular complex spaces) with weakly rational singularities, and give some characterizations of validity of the original Brian\c{c}on-Skoda theorem.

Throughout this note, all complex spaces are always assumed to be reduced and paracompact, unless otherwise mentioned; we refer to \cite{GR84, Richberg68} for main references on the theory of complex spaces.

\subsection{Grauert-Riemenschneider multiplier ideal sheaves}

Let $X$ be a complex space of pure dimension $n$ and $\varphi\in L_{\text{loc}}^1(X_\text{reg})$ with respect to the Lebesgue measure. Then, we can define the \emph{Grauert-Riemenschneider multiplier ideal sheaf} $\mathscr{I}_\text{GR}(\varphi)\subset\mathscr{M}_X$ associated to the weight $\varphi$ on $X$ (see Definition \ref{MIS_GR}). Moreover, we have the following:

\begin{proposition} \emph{(\cite{Li_BS}, Proposition 3.3).} \label{property}
Let $\varphi\in\emph{Psh}(X)$ be a plurisubharmonic function on $X$ such that $\varphi\not\equiv-\infty$ on every irreducible component of $X$. Then, $\mathscr{I}_\emph{GR}(\varphi)\subset\mathscr{M}_X$ is a coherent fractional ideal sheaf and satisfies the strong openness property, i.e.,      $$\mathscr{I}_\emph{GR}(\varphi)=\bigcup\limits_{\varepsilon>0}\mathscr{I}_\emph{GR}\big((1+\varepsilon)\varphi\big).$$
\end{proposition}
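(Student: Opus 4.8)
The plan is to transfer the statement, via a resolution of singularities, to the already-understood situation on a complex manifold. Fix $x\in X$ and choose a resolution $\pi\colon\widetilde X\to X$ over a neighbourhood of $x$: a proper bimeromorphic morphism from a complex manifold $\widetilde X$ which restricts to a biholomorphism over $X_{\mathrm{reg}}$, with exceptional set $E\subset\widetilde X$. The first (and, I expect, the only really delicate) step is to read off from Definition~\ref{MIS_GR} that membership of a meromorphic germ $f$ in $\mathscr{I}_{\mathrm{GR}}(\varphi)$ over an open set $U\ni x$ amounts to an $L^{2}$-integrability condition for $f$ against $e^{-\varphi}$ on $U\cap X_{\mathrm{reg}}$, twisted by the Grauert--Riemenschneider canonical form; because $\pi$ restricts to a biholomorphism between $X_{\mathrm{reg}}$ and $\widetilde X\setminus E$, the change-of-variables formula rewrites this as the condition that $\pi^{*}f$, trivialised by a local generator of $\omega_{\widetilde X}$, defines a germ of the (fractional) multiplier ideal $\mathscr{I}(\varphi\circ\pi)$ at every point of $\pi^{-1}(x)$. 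In short, with $\psi:=\varphi\circ\pi$,
\[
\mathscr{I}_{\mathrm{GR}}(\varphi)=\pi_{*}\big(\mathscr{O}_{\widetilde X}(K_{\widetilde X})\otimes\mathscr{I}(\psi)\big),
\]
and one must also check that the right-hand side is independent of the chosen resolution, so that the construction is well posed.

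Granting this identification, coherence is routine. Since $\varphi$ is plurisubharmonic and $\not\equiv-\infty$ on each irreducible component of $X$, the pullback $\psi$ is plurisubharmonic on the manifold $\widetilde X$ and $\not\equiv-\infty$ on each of its components (every component of $\widetilde X$ dominates one of $X$); hence $\mathscr{I}(\psi)$ is a coherent $\mathscr{O}_{\widetilde X}$-module by Nadel's coherence theorem, and so is $\mathscr{O}_{\widetilde X}(K_{\widetilde X})\otimes\mathscr{I}(\psi)$. As $\pi$ is proper, Grauert's direct image theorem gives that $\pi_{*}\big(\mathscr{O}_{\widetilde X}(K_{\widetilde X})\otimes\mathscr{I}(\psi)\big)=\mathscr{I}_{\mathrm{GR}}(\varphi)$ is coherent; it is by construction an $\mathscr{O}_X$-submodule of $\mathscr{M}_X$, i.e.\ a coherent fractional ideal sheaf (the hypothesis on $\varphi$ is what prevents it from vanishing on a component).

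For the strong openness property, the inclusion $\supset$ is mere monotonicity: a plurisubharmonic function is locally bounded above, so $e^{-\varphi}\le Ce^{-(1+\varepsilon)\varphi}$ locally and hence $\mathscr{I}_{\mathrm{GR}}\big((1+\varepsilon)\varphi\big)\subset\mathscr{I}_{\mathrm{GR}}(\varphi)$. For the reverse inclusion, let $f\in\mathscr{I}_{\mathrm{GR}}(\varphi)_{x}$; by the identification above, $\pi^{*}f$ (suitably trivialised) lies in $\mathscr{I}(\psi)$ near every $y\in\pi^{-1}(x)$. The strong openness property of multiplier ideals on the manifold $\widetilde X$, established by Guan and Zhou, gives $\mathscr{I}(\psi)=\bigcup_{\varepsilon>0}\mathscr{I}\big((1+\varepsilon)\psi\big)$ (the fractional version reduces to the usual one after clearing the denominator of $f$, which only adds a plurisubharmonic term to $\psi$), so for each such $y$ there is $\varepsilon_{y}>0$ with $\pi^{*}f\in\mathscr{I}\big((1+\varepsilon_{y})\psi\big)$ near $y$. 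Since $\pi^{-1}(x)$ is compact and these ideals increase as $\varepsilon\downarrow0$, a finite subcover produces a single $\varepsilon>0$ valid on a whole neighbourhood of $\pi^{-1}(x)$. Because $\big((1+\varepsilon)\varphi\big)\circ\pi=(1+\varepsilon)\psi$, the identification applied to the weight $(1+\varepsilon)\varphi$ now yields $f\in\mathscr{I}_{\mathrm{GR}}\big((1+\varepsilon)\varphi\big)_{x}$, which is the desired inclusion.

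Thus the analytic content is confined to two cited inputs — Nadel's coherence theorem and the Guan--Zhou strong openness theorem, both on the smooth space $\widetilde X$ — together with Grauert's direct image theorem and the compactness of the fibres of $\pi$. The work that genuinely has to be done is the first step: matching the intrinsic $L^{2}$ description of $\mathscr{I}_{\mathrm{GR}}(\varphi)$ with the resolution formula $\pi_{*}\big(\mathscr{O}_{\widetilde X}(K_{\widetilde X})\otimes\mathscr{I}(\psi)\big)$, keeping careful track of the Grauert--Riemenschneider twist, and verifying independence of the resolution.
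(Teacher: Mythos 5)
Your overall strategy coincides with the one underlying the paper: push the problem to a resolution $\pi:\widetilde X\to X$, identify $\mathscr{I}_{\mathrm{GR}}(\varphi)$ as the direct image of a line-bundle twist of the Nadel multiplier ideal $\mathscr{I}(\varphi\circ\pi)$, and then quote Nadel's coherence theorem, Grauert's direct image theorem, and Guan--Zhou strong openness together with the compactness of $\pi^{-1}(x)$ to produce a uniform $\varepsilon$. Steps two and three of your argument are correct and are exactly what the cited proof does.

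The gap is in the step you yourself flag as ``the work that genuinely has to be done'': the identification you write down is not the right one. The sheaf $\pi_{*}\bigl(\mathcal{O}_{\widetilde X}(K_{\widetilde X})\otimes\mathscr{I}(\varphi\circ\pi)\bigr)$ is, by the functoriality property recorded after Definition \ref{canonical}, the twisted canonical sheaf $\omega_X^{\mathrm{GR}}(\varphi)$ --- a sheaf of $n$-forms --- and not the fractional ideal $\mathscr{I}_{\mathrm{GR}}(\varphi)\subset\mathscr{M}_X$. Your phrase ``twisted by the Grauert--Riemenschneider canonical form'' presumes a single distinguished generator of $\omega_X$, but $\omega_X$ is in general not invertible: Definition \ref{MIS_GR} quantifies over \emph{all} holomorphic sections $\sigma$ of $\omega_X$ near $x$, equivalently over a finite generating set $\sigma_1,\dots,\sigma_r$, and one cannot simply ``trivialise'' to divide out. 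The paper's local description handles this by embedding $X$ in an $n$-dimensional complete intersection $Z$ (so that $\omega_Z|_X$ is free), writing $\mathrm{Im}(\omega_X\hookrightarrow\omega_Z|_X)=\mathfrak{d}_{X,Z}\otimes\omega_Z|_X$, and taking $\pi$ to be a log resolution of $\mathfrak{d}_{X,Z}$ with $\mathfrak{d}_{X,Z}\cdot\mathcal{O}_{\widetilde X}=\mathcal{O}_{\widetilde X}(-D_Z)$; only then does the membership condition become a line-bundle twist, namely
$$\mathscr{I}_{\mathrm{GR}}(\varphi)=\pi_*\Bigl(\omega_{\widetilde X}\otimes\pi^*(\omega_Z^{-1}|_X)\otimes\mathcal{O}_{\widetilde X}(D_Z)\otimes\mathscr{I}(\varphi\circ\pi)\Bigr),$$
which is formula $(\spadesuit)$ in Section 3. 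Your formula is the special case where $\omega_X$ is trivial (e.g.\ $X$ smooth or $1$-Gorenstein with $\mathfrak{d}_{X,Z}$ principal). Since everything downstream (coherence, and in particular applying strong openness to the \emph{same} twisting bundle for the weights $(1+\varepsilon)\varphi\circ\pi$) rests on this identification, the proof as written does not close; once $(\spadesuit)$ is substituted for your formula, the remainder of your argument goes through essentially verbatim.
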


Different from the usual multiplier ideal defined by integrability with respect to the Lebesgue measure (cf. \cite{De10}, Definition 5.4), the Grauert-Riemenschneider multiplier ideal sheaf is in general a \emph{fractional} ideal sheaf, which is an ideal sheaf when $X$ is a normal complex space and agrees with the usual multiplier ideal if $X$ is smooth (cf. \cite{Li_BS}). So it is interesting for us to understand how ``singular'' the multiplier ideals might be. Fortunately, the singularities encoded by Grauert-Riemenschneider multiplier ideals are not too bad and in fact we have the following

\begin{theorem} \label{GR_weakly}
With the same hypotheses as in Proposition \ref{property}, then we obtain that $\mathscr{I}_\emph{GR}(\varphi)\subset\widehat{\mathcal{O}}_X$, the sheaf of weakly holomorphic functions on $X$. In addition, $\mathscr{I}_\emph{GR}(\varphi)$ is integrally closed in $\widehat{\mathcal{O}}_X$.
\end{theorem}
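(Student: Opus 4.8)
\emph{Overview.} I would prove the two assertions separately: for the inclusion $\mathscr{I}_{\mathrm{GR}}(\varphi)\subset\widehat{\mathcal{O}}_X$ I would reduce to the normal case via the normalization and then invoke Riemann's second extension theorem, and for integral closedness I would exploit the monotonicity built into the $L^2$ definition of $\mathscr{I}_{\mathrm{GR}}$. Throughout, by Proposition \ref{property} we may already assume $\mathscr{I}_{\mathrm{GR}}(\varphi)$ is a coherent fractional ideal sheaf, and since everything in sight is a sheaf and the statements are local we are free to shrink $X$.

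\emph{Step 1: reduction to the normal case.} Let $\nu\colon X'\to X$ be the normalization; it is finite, $X'$ is normal of pure dimension $n$, $\nu$ restricts to a biholomorphism over $X_{\mathrm{reg}}$, and $\widehat{\mathcal{O}}_X=\nu_*\mathcal{O}_{X'}$ by Oka's theorem. The Grauert--Riemenschneider canonical sheaf is functorial for the normalization: choosing a resolution $\rho\colon\widetilde{X}\to X$ factoring through $X'$ (and which is an isomorphism over $X_{\mathrm{reg}}$) one has $\omega_X^{\mathrm{GR}}=\rho_*\omega_{\widetilde{X}}=\nu_*\omega_{X'}^{\mathrm{GR}}$, so that, under the identification $\mathscr{M}_X=\nu_*\mathscr{M}_{X'}$, the defining weighted $L^2$--integrability conditions of $\mathscr{I}_{\mathrm{GR}}(\varphi)$ on $X$ and of $\mathscr{I}_{\mathrm{GR}}(\nu^*\varphi)$ on $X'$ correspond sheet by sheet (the regular loci are biholomorphic under $\nu$ and carry the same volume forms, and $\nu^*\varphi$ is again plurisubharmonic and not identically $-\infty$ on any component). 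In particular a section $f$ of $\mathscr{I}_{\mathrm{GR}}(\varphi)$ near $x$ pulls back to a section of $\mathscr{I}_{\mathrm{GR}}(\nu^*\varphi)$ near each point of $\nu^{-1}(x)$. It therefore suffices to prove that on a \emph{normal} space one has $\mathscr{I}_{\mathrm{GR}}(\psi)\subset\mathcal{O}_X$.

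\emph{Step 2: the normal case.} Let $X$ be normal and $f$ a section of $\mathscr{I}_{\mathrm{GR}}(\psi)\subset\mathscr{M}_X$ near $x$. First, $f$ has no pole on $X_{\mathrm{reg}}$: if the polar set of $f$ met $X_{\mathrm{reg}}$, pick a generic, hence smooth, point $p$ of a component of it, where $f$ has a genuine pole of order $\geq 1$ along a smooth hypersurface; since $\omega_X^{\mathrm{GR}}$ agrees with the canonical sheaf on $X_{\mathrm{reg}}$ and is globally generated after shrinking, choose a test section $\sigma$ of $\omega_X^{\mathrm{GR}}$ at $x$ with $\sigma(p)\neq0$; then $|f|^2e^{-2\psi}\,i^{n^2}\sigma\wedge\bar\sigma$ fails to be integrable near $p$ (a pole is non-integrable in one complex variable and $e^{-2\psi}$ is bounded below near $p$ by upper semicontinuity of $\psi$), contradicting $f\in\mathscr{I}_{\mathrm{GR}}(\psi)_x$. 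Hence $f\in\mathcal{O}_{X_{\mathrm{reg}}}$. Since $X$ is normal, $X_{\mathrm{sing}}$ has codimension $\geq2$, so by Riemann's second extension theorem $f$ extends across $X_{\mathrm{sing}}$, i.e. $f\in\mathcal{O}_{X,x}$. Combined with Step 1 this gives $\mathscr{I}_{\mathrm{GR}}(\varphi)\subset\widehat{\mathcal{O}}_X$ in general.

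\emph{Step 3: integral closedness.} Let $g\in\widehat{\mathcal{O}}_{X,x}$ be integral over $\mathscr{I}_{\mathrm{GR}}(\varphi)_x$, say $g^d+a_1g^{d-1}+\cdots+a_d=0$ with $a_j\in(\mathscr{I}_{\mathrm{GR}}(\varphi)_x)^j$, and let $h_1,\dots,h_r$ generate $\mathscr{I}_{\mathrm{GR}}(\varphi)_x$. Working on the normalization, where $g,a_j,h_i$ are honest holomorphic functions, the standard bound on the roots of a monic polynomial with bounded coefficients (noting $|a_j|\leq C_j(\max_i|h_i|)^j$) yields a constant $C$ and a neighbourhood $U$ of $x$ with $|g|\leq C\max_i|h_i|$, hence $|g|^2\leq C^2\sum_i|h_i|^2$, on $U_{\mathrm{reg}}$. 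Consequently, for every germ $\sigma$ of a section of $\omega_X^{\mathrm{GR}}$ at $x$ one has $|g|^2e^{-2\varphi}\,i^{n^2}\sigma\wedge\bar\sigma\leq C^2\sum_i|h_i|^2e^{-2\varphi}\,i^{n^2}\sigma\wedge\bar\sigma$ on $U_{\mathrm{reg}}$, and the right-hand side is locally integrable since each $h_i\in\mathscr{I}_{\mathrm{GR}}(\varphi)_x$. By the definition of $\mathscr{I}_{\mathrm{GR}}(\varphi)$ this forces $g\in\mathscr{I}_{\mathrm{GR}}(\varphi)_x$, so $\mathscr{I}_{\mathrm{GR}}(\varphi)$ is integrally closed in $\widehat{\mathcal{O}}_X$.

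\emph{Main obstacle.} Steps 2 and 3 are essentially routine once the $L^2$ definition is in hand. The point requiring care is the compatibility of the Grauert--Riemenschneider multiplier ideal with the normalization in Step 1, namely that $\nu_*\mathscr{I}_{\mathrm{GR}}(\nu^*\varphi)=\mathscr{I}_{\mathrm{GR}}(\varphi)$; this rests on $\nu_*\omega_{X'}^{\mathrm{GR}}=\omega_X^{\mathrm{GR}}$ together with the fact that $\nu$ is a finite map which is biholomorphic over the regular loci, so that the weighted $L^2$--conditions transfer verbatim. If one wished to bypass the normalization, the difficulty would reappear as the need to exclude unbounded weakly meromorphic functions from $\mathscr{I}_{\mathrm{GR}}(\varphi)$ directly, which is less transparent.
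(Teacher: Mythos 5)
Your argument is correct in substance but follows a genuinely different route from the paper's for the inclusion $\mathscr{I}_{\mathrm{GR}}(\varphi)\subset\widehat{\mathcal{O}}_X$. The paper works on a resolution $\pi:\widetilde X\to X$: assuming $f\notin\widehat{\mathcal{O}}_{X,x}$, it picks an irreducible component $E$ of the polar set of $f\circ\pi$ inside $\mathrm{Ex}(\pi)$, takes generators $\sigma_1,\dots,\sigma_r$ of $\omega_{X,x}$, and derives a contradiction by comparing $\mathrm{ord}_E\,\pi^*(f\sigma_{k_0})$ with $\mathrm{ord}_E\,\pi^*\sigma_{k_0}$, using that $f\sigma_{k_0}\in\omega_{X,x}^{\mathrm{GR}}\subset\omega_{X,x}$ is again an $\mathcal{O}_{X,x}$-combination of the $\sigma_k$. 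You instead normalize and invoke the second Riemann extension theorem; the paper itself remarks that this shortcut works once $\mathrm{codim}\,X_{\mathrm{sing}}\geq2$, and your Step 1 is exactly the device that reduces the general case to that situation, at the price of a transfer statement under $\nu$. Your Step 3 coincides with the paper's (which delegates to its Corollary \ref{IC_weakly}, i.e.\ the same pointwise bound $|g|\leq C\sup_i|h_i|$ away from a thin set followed by domination of the integrand).

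The one point you must repair is the justification of Step 1. The definition of $\mathscr{I}_{\mathrm{GR}}$ tests integrability against sections of the Grothendieck dualizing sheaf $\omega_X$, not of $\omega_X^{\mathrm{GR}}$, so the functoriality $\nu_*\omega_{X'}^{\mathrm{GR}}=\omega_X^{\mathrm{GR}}$ that you cite does not by itself give the ``verbatim'' correspondence of the defining conditions; moreover the equality $\nu_*\mathscr{I}_{\mathrm{GR}}(\nu^*\varphi)=\mathscr{I}_{\mathrm{GR}}(\varphi)$ is doubtful in general, since $\omega_X$ may be strictly larger than $\nu_*\omega_{X'}$ (already for a nodal curve). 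Fortunately only one inclusion is needed, and it does hold: by duality for the finite birational map $\nu$ one has $\nu_*\omega_{X'}\cong\mathscr{H}om_{\mathcal{O}_X}(\nu_*\mathcal{O}_{X'},\omega_X)\subset\omega_X$ compatibly with the identification over $X_{\mathrm{reg}}$, so every test section $\tau$ of $\omega_{X'}$ near a point of $\nu^{-1}(x)$ (extended by zero near the other points of the fibre) is dominated by a test section of $\omega_X$ near $x$; hence $f\in\mathscr{I}_{\mathrm{GR}}(\varphi)_x$ forces $f\circ\nu\in\mathscr{I}_{\mathrm{GR}}(\nu^*\varphi)$, which is all Steps 2 requires. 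With that substitution the argument closes.
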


Combining with a relative version of Grauert-Riemenschneider vanishing theorem for the higher direct images, we formulate a Skoda-type division theorem for Grauert-Riemenschneider multiplier ideal sheaves as follows (see also \cite{Li_BS}).

\begin{theorem} \label{SkodaDivision}
Let $X$ be a complex space of pure dimension $n$ and $\varphi\in\emph{Psh}(X)$ such that $\varphi\not\equiv-\infty$ on every irreducible component of $X$. Let $\mathfrak{a}\subset\widehat{\mathcal{O}}_X$ be a nonzero ideal sheaf with $r$ (local) generators. Then, for any integer $k\geq q:=\min\{n,r\}$, we have
$$\mathscr{I}_\emph{GR}\big(\varphi+k\varphi_\mathfrak{a}\big)=\mathfrak{a}\cdot\mathscr{I}_\emph{GR}\big(\varphi+(k-1)\varphi_\mathfrak{a}\big),$$
where $\varphi_{c\cdot\frak{a}}:=\frac{c}{2}\log(\sum_k|g_k|^2)$ for each $c\geq0$ and $(g_k)$ is any local system of generators of $\frak{a}$.
\end{theorem}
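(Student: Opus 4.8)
The plan is to establish the two inclusions separately. The inclusion $\mathfrak{a}\cdot\mathscr{I}_{\mathrm{GR}}(\varphi+(k-1)\varphi_{\mathfrak{a}})\subset\mathscr{I}_{\mathrm{GR}}(\varphi+k\varphi_{\mathfrak{a}})$ is elementary: since $\varphi_{\mathfrak{a}}=\frac12\log(\sum_j|g_j|^2)\ge\log|g_j|$ for each local generator $g_j$ of $\mathfrak{a}$, any local section $g$ of $\mathfrak{a}$ satisfies $|g|^2 e^{-2\varphi_{\mathfrak{a}}}\le C$ locally, so for a local section $f$ of $\mathscr{I}_{\mathrm{GR}}(\varphi+(k-1)\varphi_{\mathfrak{a}})$ the pointwise bound $|gf|^2 e^{-2(\varphi+k\varphi_{\mathfrak{a}})}\le C\,|f|^2 e^{-2(\varphi+(k-1)\varphi_{\mathfrak{a}})}$ forces $gf$ to satisfy the defining integrability condition of $\mathscr{I}_{\mathrm{GR}}(\varphi+k\varphi_{\mathfrak{a}})$; this works for every $k\ge1$. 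Hence the content of the theorem is the reverse inclusion $\mathscr{I}_{\mathrm{GR}}(\varphi+k\varphi_{\mathfrak{a}})\subset\mathfrak{a}\cdot\mathscr{I}_{\mathrm{GR}}(\varphi+(k-1)\varphi_{\mathfrak{a}})$ for $k\ge q$, the singular analogue of Skoda's $L^2$ division theorem, which I would prove on a resolution and descend by vanishing.

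First I would fix a log resolution $\pi\colon\widetilde X\to X$ that both desingularizes $X$ and principalizes $\mathfrak{a}$: this is legitimate because weakly holomorphic functions pull back to holomorphic functions on $\widetilde X$, so $\mathfrak{a}\cdot\mathcal{O}_{\widetilde X}$ is an honest ideal sheaf, which after further blow-ups equals $\mathcal{O}_{\widetilde X}(-F)$ for an effective divisor $F$ with $\mathrm{Supp}\,F\cup\mathrm{Exc}(\pi)$ of normal crossings; one may also arrange the unbounded locus of $\pi^{*}\varphi$ in normal crossing position with $F$. Then $\pi^{*}\varphi_{\mathfrak{a}}=\log|s_F|+\psi$ with $s_F$ a local equation of $F$ and $\psi$ bounded (the $g_j/s_F$ having no common zero), so $\mathscr{I}(\pi^{*}(\varphi+m\varphi_{\mathfrak{a}}))=\mathcal{O}_{\widetilde X}(-mF)\otimes\mathscr{I}(\pi^{*}\varphi)$ exactly — and here Proposition~\ref{property} (strong openness) is what justifies passing to this normal crossing model without floor-function discrepancies. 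By the construction of the Grauert--Riemenschneider multiplier ideal in \cite{Li_BS}, $\mathscr{I}_{\mathrm{GR}}(\varphi+m\varphi_{\mathfrak{a}})=\pi_{*}\mathcal{K}_m$ for the corresponding coherent sheaf $\mathcal{K}_m\subset\mathscr{M}_{\widetilde X}\otimes\omega_{\widetilde X}$, and the identity above gives $\mathcal{K}_m=\mathcal{K}_{m-1}\otimes\mathcal{O}_{\widetilde X}(-F)$. Writing $g_j=s_F\cdot\widetilde g_j$, the generators induce a surjection $u\colon\mathcal{O}_{\widetilde X}^{\oplus r}\twoheadrightarrow\mathcal{O}_{\widetilde X}(-F)$, surjective precisely because $\mathfrak{a}$ has been principalized.

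Next I would run the Skoda--Koszul mechanism. Tensoring the Koszul complex of $u$ by $\mathcal{K}_{k-1}$ yields an exact complex on $\widetilde X$,
$$0\to\textstyle\bigwedge^{r}\mathcal{O}_{\widetilde X}^{\oplus r}\otimes\mathcal{K}_{k-r}\to\cdots\to\bigwedge^{2}\mathcal{O}_{\widetilde X}^{\oplus r}\otimes\mathcal{K}_{k-2}\to\mathcal{O}_{\widetilde X}^{\oplus r}\otimes\mathcal{K}_{k-1}\xrightarrow{\,u\,}\mathcal{K}_{k}\to0,$$
using $\mathcal{O}_{\widetilde X}(jF)\otimes\mathcal{K}_{k}\cong\mathcal{K}_{k-j}$ and the fact that the Koszul differentials raise the $\mathcal{O}_{\widetilde X}(F)$-twist by one. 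I would break this into short exact sequences $0\to\mathcal{Z}_{j}\to C_{j}\to\mathcal{Z}_{j-1}\to0$ (with $C_{j}=\bigwedge^{j+1}\mathcal{O}_{\widetilde X}^{\oplus r}\otimes\mathcal{K}_{k-1-j}$, $\mathcal{Z}_{-1}=\mathcal{K}_{k}$) and apply $\pi_{*}$. By the relative Grauert--Riemenschneider vanishing theorem for higher direct images (applicable since $\mathcal{K}_{m}$ is, up to the twist $\omega_{\widetilde X}\otimes\mathscr{I}(\pi^{*}\varphi)$, of the form $\omega_{\widetilde X}\otimes(\pi\text{-nef})$ when $m\ge0$), one has $R^{i}\pi_{*}C_{j}=0$ for $i\ge1$ whenever $k-1-j\ge0$; combined with the automatic vanishing $R^{i}\pi_{*}=0$ for $i\ge n$ (the fibres of $\pi$ have dimension $\le n-1$), the diagram chase only ever needs $R^{i}\pi_{*}C_{i}=0$ for $i\le\min\{n,r\}-1$, which is exactly where the hypothesis $k\ge q=\min\{n,r\}$ enters. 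The chase then gives that $\pi_{*}(\mathcal{O}_{\widetilde X}^{\oplus r}\otimes\mathcal{K}_{k-1})\to\pi_{*}\mathcal{K}_{k}$ is surjective, i.e. $(g_{1},\dots,g_{r})\colon\mathscr{I}_{\mathrm{GR}}(\varphi+(k-1)\varphi_{\mathfrak{a}})^{\oplus r}\twoheadrightarrow\mathscr{I}_{\mathrm{GR}}(\varphi+k\varphi_{\mathfrak{a}})$, which is the desired inclusion.

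The conceptual skeleton is classical — Skoda's division on the smooth model plus vanishing to push down — so the real work, and the main obstacle, is the bookkeeping on $\widetilde X$: pinning down the coherent model $\mathcal{K}_m$ with $\pi_{*}\mathcal{K}_m=\mathscr{I}_{\mathrm{GR}}(\varphi+m\varphi_{\mathfrak{a}})$ compatibly with multiplication by the $g_j$ (using Proposition~\ref{property} to absorb floor-function issues and Theorem~\ref{GR_weakly} to know these are honest subsheaves of $\widehat{\mathcal{O}}_X$), and verifying the precise range of the relative Grauert--Riemenschneider vanishing for the twisted sheaves $\mathcal{K}_{k-j}$ — in particular the $\pi$-nefness of $-F$ along the exceptional fibres and the reduction, via $R^{\ge n}\pi_{*}=0$, of the effective length of the Koszul complex to $q=\min\{n,r\}$. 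Once that is settled the inclusion is formal.
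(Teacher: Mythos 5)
Your argument is correct and follows the same backbone as the paper's proof: identify $\mathscr{I}_{\mathrm{GR}}(\varphi+m\varphi_{\mathfrak{a}})$ as $\pi_*$ of a coherent sheaf on a log resolution via the local description $(\spadesuit)$, twist the (locally split, hence exact) Koszul complex of the generators, kill the higher direct images of the terms by the relative Grauert--Riemenschneider vanishing (Theorem \ref{G-R_vanishing}), and read off the surjectivity of $V\otimes\mathscr{I}_{\mathrm{GR}}(\varphi+(k-1)\varphi_{\mathfrak{a}})\to\mathscr{I}_{\mathrm{GR}}(\varphi+k\varphi_{\mathfrak{a}})$. The one genuine divergence is the case $r>n$: the paper first replaces $\mathfrak{a}$ by a reduction generated by $n$ elements (Lemma \ref{Exis_reduction}, which rests on an analytic Bertini-type statement) and then runs the length-$n$ Koszul complex, whereas you keep all $r$ generators and truncate the diagram chase using the automatic vanishing $R^{i}\pi_*=0$ for $i\geq n$ forced by the fibre-dimension bound, so that only $R^{j}\pi_*C_j$ with $j\leq q-1$ --- hence only the sheaves with nonnegative twist $\mathcal{O}_{\widetilde X}(-(k-1-j)F)$, to which Theorem \ref{G-R_vanishing} applies --- are ever needed. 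This is a legitimate and arguably cleaner route: it makes the case $r>n$ self-contained and avoids the existence of reductions entirely, at the price of manipulating the (still exact, since the Koszul complex is locally split) tail of the complex whose terms carry positive multiples of $F$. Two cosmetic corrections: strong openness is not what gives $\mathscr{I}(\varphi\circ\pi+m\varphi_{\mathfrak{a}}\circ\pi)=\mathcal{O}_{\widetilde X}(-mF)\otimes\mathscr{I}(\varphi\circ\pi)$ --- this identity is exact for an integer multiple of a principal divisor, since $|f|^{2}e^{-2m\log|s_F|}=|f/s_F^{m}|^{2}$ --- and your ``corresponding coherent sheaf'' $\mathcal{K}_m$ must carry the correction factor $\pi^*(\omega_Z^{-1}|_X)\otimes\mathcal{O}_{\widetilde X}(D_Z)$ coming from the local embedding into an $n$-dimensional complete intersection $Z$ and the ideal $\mathfrak{d}_{X,Z}$, exactly as in $(\spadesuit)$; without that factor $\pi_*\mathcal{K}_m$ computes $\omega_X^{\mathrm{GR}}(\cdot)$ rather than $\mathscr{I}_{\mathrm{GR}}(\cdot)$.
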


\begin{remark}
What we would like to mention is that the weight $\varphi_{c\cdot\frak{a}}\in L_{\text{loc}}^1(X_\text{reg})$ here is not plurisubharmonic (\emph{not} even upper semi-continuous) on $X$ in general. Let $\pi:\widetilde X\to X$ be any resolution of singularities of $X$. Then, we obtain that $\varphi_{c\cdot\frak{a}}\circ\pi$ is plurisubharmonic on $\widetilde X$, which implies that $\varphi_{c\cdot\frak{a}}$ is plurisubharmonic if $\varphi_{c\cdot\frak{a}}$ is upper semi-continuous by Proposition 2.1 in \cite{CM85} (see also \cite{Li_pullback} for more details).
\end{remark}

\subsection{The Brian\c{c}on-Skoda theorem for weakly holomorphic functions}

Let $\mathcal{I}$ be a nonzero ideal of the ring $\mathcal{O}_n=\mathbb{C}\{z_1,...,z_n\}$ of convergent power series in $n$ variables, $g= (g_1,...,g_r)$ a system of generators of $\mathcal{I}$ and $\overline{\mathcal{I}}$ the integral closure of $\mathcal{I}$ in $\mathcal{O}_n$. In \cite{B-S}, Brian\c{c}on and Skoda established the following remarkable result on the comparison of powers of an ideal with the integral closure of their powers, which confirmed a question posed by J. Mather (see \cite{De10}, \S11.B):\\

\textbf{Brian\c{c}on-Skoda Theorem.} If $q=\min\{n,r\}$, then $\overline{\mathcal{I}^{k+q-1}}\subset\mathcal{I}^k,\ k\in\mathbb{N}$. Moreover, the number $q$ is optimal.\\

The original proof given by Brian\c{c}on and Skoda is based on the so-called Skoda's $L^2$ division theorem for holomorphic functions (see \cite{De10}, Theorem 11.13). From a purely algebraic viewpoint, the Brian\c{c}on-Skoda theorem was generalized to arbitrary regular rings by Lipman and Sathaye in \cite{L-S}, and partially extended to pseudo-rational rings by Lipman and Teissier in \cite{L-T}. Later on, Huneke \cite{Huneke92} proved the following similar ``uniform Brian\c{c}on-Skoda'' for much more broad classes of Noetherian rings by the characteristic $p$ method relied on the theory of tight closure:

\begin{theorem} \emph{(\cite{Huneke92}, Theorem 4.13).} \label{Huneke_BS}
Let $R$ be a Noetherian reduced ring. If $R$ satisfies at least one of the following conditions, then there exists a positive integer $\nu$ such that for all ideals $I$ of $R$, $\overline{I^{k+\nu}}\subset I^k,\ k\in\mathbb{N}$.

$(i)$ $R$ is essentially of finite type over an excellent Noetherian local ring.

$(ii)$ $R$ is of characteristic $p$, and $R^{1/p}$ is module finite over $R$.

$(iii)$ $R$ is essentially of finite type over $\mathbb{Z}$.
\end{theorem}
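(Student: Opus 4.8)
The plan is to reprove this by the characteristic-$p$ tight-closure method of Huneke, in which the three hypotheses $(i)$--$(iii)$ enter precisely as the cases for which a uniform Artin--Rees lemma is available. Write $d=\dim R$ and let $(-)^{*}$ denote tight closure. In characteristic $p>0$ the backbone is the Hochster--Huneke tight-closure version of the Brian\c{c}on--Skoda theorem: if an ideal $J$ is generated by $\ell$ elements, then $\overline{J^{\,k+\ell-1}}\subseteq (J^{k})^{*}$ for all $k\ge 1$. Since the inclusion to be proved can be checked after localizing at each maximal ideal, and since integral closures and the relations below localize, we may reduce to the case that $R$ is local with infinite residue field (passing to $R(t)$ if necessary); then every ideal $I$ has a minimal reduction $J$ with $\mu(J)\le d$, whence $\overline{I^{k}}=\overline{J^{k}}$ for all $k$ and, by monotonicity of tight closure, $\overline{I^{\,k+d-1}}\subseteq (J^{k})^{*}\subseteq (I^{k})^{*}$.

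Because $R$ is reduced and excellent (resp.\ F-finite), the Hochster--Huneke theory of test elements furnishes $c\in R$ in no minimal prime --- hence a nonzerodivisor, $R$ being reduced --- with $c\cdot (L)^{*}\subseteq L$ for every ideal $L$. Applying this with $L=I^{k}$ and reindexing gives $c\,\overline{I^{\,m}}\subseteq I^{\,m-d+1}$ for $m\ge d$; as also $c\,\overline{I^{\,m}}\subseteq cR$, we get $c\,\overline{I^{\,m}}\subseteq cR\cap I^{\,m-d+1}$. Now invoke the uniform Artin--Rees lemma for the pair $cR\subseteq R$ --- the step that uses hypotheses $(i)$--$(iii)$: there is an integer $h$, independent of $I$, with $cR\cap I^{n}=I^{\,n-h}\,(cR\cap I^{h})$ for all $n\ge h$ and all $I$. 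Hence for $m\ge d+h-1$,
\[
c\,\overline{I^{\,m}}\subseteq cR\cap I^{\,m-d+1}=I^{\,m-d+1-h}\,(cR\cap I^{h})\subseteq c\,I^{\,m-d+1-h},
\]
and cancelling the nonzerodivisor $c$ yields $\overline{I^{\,m}}\subseteq I^{\,m-d+1-h}$. With $\nu:=d+h-1$ this is $\overline{I^{\,k+\nu}}\subseteq I^{k}$ for every $k$, settling the positive-characteristic case, in particular $(ii)$.

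For $(i)$ and $(iii)$ one descends to positive characteristic: spread $R$ (or a finitely generated subalgebra carrying the finitely many data actually used) and a given ideal out over the spectrum of a finitely generated $\mathbb{Z}$-subalgebra $A$ of the base, and apply the characteristic-$p$ conclusion on the closed fibres. The point needing care is uniformity --- one needs the Artin--Rees number $h$, equivalently the exponent $\nu$, to work simultaneously over a dense open set of closed points of $\operatorname{Spec} A$, which rests on constructibility of the relevant loci together with uniform test exponents; standard spreading-out and descent then recover $\overline{I^{\,k+\nu}}\subseteq I^{k}$ over $R$ itself.

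The main obstacle is this twofold uniformity. The uniform Artin--Rees lemma is the genuine technical core and must be established separately for each of the classes $(i)$--$(iii)$; and transporting the characteristic-$p$ bound to characteristic zero and mixed characteristic requires controlling tight closure uniformly along reduction mod $p$. Once test elements and uniform Artin--Rees are granted, however, the purely characteristic-$p$ case $(ii)$ falls out at once from the displayed chain of inclusions.
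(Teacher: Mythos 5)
First, a point of comparison: the paper does not prove this statement at all --- it is quoted verbatim from Huneke (\cite{Huneke92}, Theorem 4.13) and used as a black box --- so there is no in-paper argument to measure yours against. Judged on its own, your characteristic-$p$ chain is a faithful reconstruction of the tight-closure route and is essentially correct: the tight-closure Brian\c{c}on--Skoda theorem for an $\ell$-generated ideal, passage to a minimal reduction with at most $\dim R$ generators after enlarging the residue field, a test element $c$ avoiding all minimal primes (hence a nonzerodivisor on the reduced ring $R$), and uniform Artin--Rees for the pair $cR\subseteq R$ to cancel $c$. Two caveats. The uniform Artin--Rees theorem yields only the containment $cR\cap I^{n}\subseteq I^{\,n-h}\cdot cR$, not the equality you wrote (the ``strong'' uniform Artin--Rees property with equality is a separate and much more delicate question); fortunately the containment is all your displayed chain uses. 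And the reduction to the local case must come with the remark that $d$, $c$ and $h$ can be chosen once for all maximal ideals --- which the cited theories do provide --- since otherwise $\nu$ is not uniform.

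The genuine gap is in cases $(i)$ and $(iii)$. The plan to ``apply the characteristic-$p$ conclusion on the closed fibres'' and descend cannot work as stated in mixed characteristic: for $R$ essentially of finite type over $\mathbb{Z}$, or over an excellent local ring of mixed characteristic, the local rings at maximal ideals are themselves of mixed characteristic, and a containment $\overline{I^{\,k+\nu}}\subseteq I^{k}$ in such a ring is not detected by the characteristic-$p$ fibres $R/pR$: integral closure does not commute with these quotients, and knowing the inclusion modulo every $p$ does not recover it in $R$. Reduction to characteristic $p$ is a legitimate tool only in equal characteristic zero (via Artin approximation or general N\'eron desingularization in the local case). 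The characteristic-free route --- and the one these hypotheses actually require --- keeps your Artin--Rees cancellation step verbatim but replaces the tight-closure/test-element input by the Lipman--Sathaye theorem \cite{L-S}: for $R$ generically \'etale and essentially of finite type over a regular domain $A$, the Jacobian ideal $J_{R/A}$ satisfies $J_{R/A}\cdot\overline{I^{\,n+\ell}}\subseteq I^{\,n+1}$ for every $\ell$-generated ideal, in all characteristics, which furnishes the uniform multiplier $c$ directly. Without this (or some other characteristic-free substitute), your argument establishes only case $(ii)$ and the equicharacteristic-zero part of $(i)$.
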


Here the constant $\nu$ must be much larger than $\dim R$ in general and the optimal such $\nu$ is called the \emph{Brian\c{c}on-Skoda number} for the ring $R$; one can refer to \cite{SH06} and \cite{ASS10, Szna10} for more details on the above ``uniform Brian\c{c}on-Skoda'' as well. Specifically, in his paper, Huneke pointed that it is of great interest to find what the Brian\c{c}on-Skoda number is in Theorem \ref{Huneke_BS} in terms of invariants of the ring $R$ and such a number is just $\dim R-1$ for regular $R$ by the original Brian\c{c}on-Skoda theorem (cf. \cite{Huneke92}, Remark 4.14). Thus, if we could establish the original Brian\c{c}on-Skoda theorem for a class of Noetherian rings, we would obtain the Brian\c{c}on-Skoda number $\dim R-1$ immediately.

In order to establish the original Brian\c{c}on-Skoda theorem for ``singular'' Noetherian rings in the analytic setting, a natural idea is to generalize Skoda's $L^2$ division theorem for holomorphic functions to singular complex spaces. Unfortunately, a trivial generalization to the singular case is not possible due to Theorem \ref{Skoda_negative} in section \ref{Non_division}. In \cite{Li_BS}, the author established the original Brian\c{c}on-Skoda theorem for analytic local rings with normal weakly rational singularities (\emph{not} essentially of finite type over $\mathbb{C}$ and Cohen-Macaulay), by introducing the notion of Grauert-Riemenschneider multiplier ideal sheaves. In this section, combining weak holomorphicity of Grauert-Riemenschneider multiplier ideals with some techniques adopted in \cite{Li_BS}, we are able to establish the following:

\begin{theorem} \label{BS_weakly}
Let $X$ be an $n$-dimensional complex space with $x\in X$ a point and $\mathcal{I}\subset\widehat{\mathcal{O}}_{X,x}$ a nonzero ideal with $r$ generators. If $x\in X$ is a weakly rational singularity, then the original Brian\c{c}on-Skoda theorem holds for the Noetherian ring $\widehat{\mathcal{O}}_{X,x}$. In particular, the Brian\c{c}on-Skoda number for $\widehat{\mathcal{O}}_{X,x}$ is $n-1$.
\end{theorem}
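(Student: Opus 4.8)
The plan is to deduce Theorem~\ref{BS_weakly} from the Skoda-type division theorem (Theorem~\ref{SkodaDivision}) together with the weak holomorphicity and integral-closure statement (Theorem~\ref{GR_weakly}), following the classical passage from Skoda division to Brian\c{c}on-Skoda but now carried out on a germ of a singular space. First I would set $\varphi\equiv 0$ on $X$ (which is plurisubharmonic and $\not\equiv-\infty$ on every component) and let $\mathfrak a$ be a coherent ideal sheaf on a representative of the germ whose stalk at $x$ is $\mathcal I$; write $q=\min\{n,r\}$. Applying Theorem~\ref{SkodaDivision} with this $\varphi$ and iterating the identity
\[
\mathscr I_{\mathrm{GR}}\big(k\varphi_{\mathfrak a}\big)=\mathfrak a\cdot\mathscr I_{\mathrm{GR}}\big((k-1)\varphi_{\mathfrak a}\big)\qquad(k\ge q)
\]
gives, for every $k\in\mathbb N$,
\[
\mathscr I_{\mathrm{GR}}\big((k+q-1)\varphi_{\mathfrak a}\big)=\mathfrak a^{k}\cdot\mathscr I_{\mathrm{GR}}\big((q-1)\varphi_{\mathfrak a}\big)\subset \mathfrak a^{k}.
\]
Taking stalks at $x$, it then suffices to show the inclusion $\overline{\mathcal I^{\,k+q-1}}\subset \mathscr I_{\mathrm{GR}}\big((k+q-1)\varphi_{\mathfrak a}\big)_x$ inside $\widehat{\mathcal O}_{X,x}$, and since $q\le n$ this yields $\overline{\mathcal I^{\,k+n-1}}\subset\mathcal I^{k}$, hence the original Brian\c{c}on-Skoda theorem and the number $n-1$.

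The inclusion $\overline{\mathcal I^{\,m}}\subset \mathscr I_{\mathrm{GR}}(m\varphi_{\mathfrak a})_x$ (with $m=k+q-1$) is where the weakly rational hypothesis and Theorem~\ref{GR_weakly} enter. The key point is the valuative characterization of integral closure: $h\in\overline{\mathcal I^{\,m}}$ iff $|h|\le C\,|g|^{m}$ locally after pulling back to a resolution $\pi:\widetilde X\to X$, where $|g|^2=\sum|g_k|^2$; equivalently $\log|h\circ\pi|\le m\,\varphi_{\mathfrak a}\circ\pi+O(1)$. By the defining property of $\mathscr I_{\mathrm{GR}}$ in terms of $\pi_*$ of the canonical sheaf weighted by $e^{-2m\varphi_{\mathfrak a}}$ (Definition~\ref{MIS_GR}) together with the relative Grauert-Riemenschneider vanishing already invoked in the proof of Theorem~\ref{SkodaDivision}, such an $h$ defines a section of the Grauert-Riemenschneider multiplier ideal; the weakly rational hypothesis is what makes $\pi_*\omega_{\widetilde X}=\widehat{\mathcal O}_X\cdot\omega_X$ (i.e.\ controls the canonical sheaf of $\widetilde X$ against that of $X$) so that the resulting section lands in $\widehat{\mathcal O}_{X,x}$ rather than merely in $\mathscr M_X$. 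Finally, the reverse containment $\mathscr I_{\mathrm{GR}}(m\varphi_{\mathfrak a})_x\subset\overline{\mathcal I^{\,m}}$ — not logically needed for the theorem but clarifying — follows because $\mathscr I_{\mathrm{GR}}(m\varphi_{\mathfrak a})$ is integrally closed in $\widehat{\mathcal O}_X$ by Theorem~\ref{GR_weakly} and contains $\mathcal I^{\,m}$, hence contains $\overline{\mathcal I^{\,m}}$; so in fact $\mathscr I_{\mathrm{GR}}(m\varphi_{\mathfrak a})_x=\overline{\mathcal I^{\,m}}$ for $m$ large, which packages the whole argument cleanly.

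The optimality of $n-1$ — that one cannot replace $q-1$ by anything smaller in general, so the Brian\c{c}on-Skoda number is exactly $n-1$ and not less — I would get from the smooth case: choosing $X$ a smooth germ (which is certainly a weakly rational singularity) and $\mathcal I=(z_1,\dots,z_n)$, the classical Brian\c{c}on-Skoda sharpness example shows $\overline{\mathcal I^{\,k+n-2}}\not\subset\mathcal I^{k}$ for suitable $k$, and here $r=n$ so $q=n$; thus no smaller exponent works even within this class of rings. The main obstacle I anticipate is the first inclusion: pinning down precisely that the valuative/integrability estimate for $h\in\overline{\mathcal I^{\,m}}$ produces an honest $L^2$ section of $\omega_{\widetilde X}\otimes\pi^*(\text{weight})$ whose pushforward is a weakly holomorphic function on $X$, i.e.\ correctly matching the normalization in Definition~\ref{MIS_GR} and invoking weak rationality at the right moment; the algebra after that (the telescoping of the Skoda identity and the dimension count $q\le n$) is routine.
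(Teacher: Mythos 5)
Your proposal is correct and follows essentially the same route as the paper: the valuative/boundedness characterization of integral closure in $\widehat{\mathcal{O}}_{X,x}$ (Corollary \ref{IC_weakly}) combined with weak rationality gives $\overline{\mathcal{I}^{\,k+q-1}}\subset\mathscr{I}_{\mathrm{GR}}\big((k+q-1)\varphi_{\mathfrak a}\big)_x$, the identity of Theorem \ref{SkodaDivision} is telescoped, and Theorem \ref{GR_weakly} places the result back inside $\widehat{\mathcal{O}}_{X,x}$. The only cosmetic differences are that the paper first passes to a minimal reduction so as to assume $r\le n$, and that in the key inclusion weak rationality enters merely to guarantee that $\sigma\wedge\overline\sigma$ is locally integrable for every section $\sigma$ of $\omega_X$ (so the bounded quotient $|f|^2e^{-2m\varphi_{\mathfrak a}}$ is integrable against it; no appeal to relative vanishing is needed at that point), whereas the containment of the multiplier ideal in $\widehat{\mathcal{O}}_{X,x}$ is the unconditional Theorem \ref{GR_weakly} rather than a consequence of weak rationality.
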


\begin{remark}
In \cite{H-V}, an algebraic analogue was established for the case that $R$ is normal local Gorenstein and essentially of finite type over a field of characteristic 0, where the Cohen-Macaulayness of $R$ is necessary.
\end{remark}

\begin{remark}
On the other hand, relying on the Nadel-Ohsawa multiplier ideal sheaves introduced in \cite{Li_multiplier} and a version of Ohsawa-Takegoshi extension theorem presented in \cite{G-Z_optimal}, we can establish the following Brian\c{c}on-Skoda type theorem:

Let $X$ be a complex space of pure dimension $n$ with $x\in X$ a singularity, and $\mathcal{I}\subset\mathcal{O}_{X,x}$ a nonzero ideal with $r$ generators. Set $q=\min\{n,r\}$. Then we have
$$\overline{\mathcal{J}ac_{X,x}\cdot\mathcal{I}^{k+q-1}}\subset\mathcal{I}^k, \ k\in\mathbb{N},$$
where $\mathcal{J}ac_{X}$ is the Jacobian ideal of $X$.
\end{remark}

As a direct consequence of Theorem \ref{BS_weakly}, we can derive the main result in \cite{Li_BS}:

\begin{corollary} \emph{(\cite{Li_BS}, Theorem 1.1).}
The original Brian\c{c}on-Skoda theorem is valid for any analytic local ring with normal weakly rational singularities.
\end{corollary}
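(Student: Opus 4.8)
The plan is to derive the corollary directly from Theorem \ref{BS_weakly}, the point being that on a normal complex space the sheaf of weakly holomorphic functions coincides with the structure sheaf, so that the Noetherian ring $\widehat{\mathcal{O}}_{X,x}$ appearing in Theorem \ref{BS_weakly} reduces to the given analytic local ring.

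First I would unwind the hypothesis: an analytic local ring $R$ with normal weakly rational singularities is, by definition, isomorphic to $\mathcal{O}_{X,x}$ for some normal complex space $X$ and a point $x\in X$ at which $X$ has a weakly rational singularity. Since $R$ is then a normal (hence integral) domain, the germ $(X,x)$ is irreducible of pure dimension $n:=\dim R$, so that the hypotheses of Theorem \ref{BS_weakly} are met.

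Next I would invoke the standard characterization of normality for reduced complex spaces: $X$ is normal at $x$ if and only if $\mathcal{O}_{X,x}=\widehat{\mathcal{O}}_{X,x}$, i.e., every germ of a locally bounded meromorphic function at $x$ is already holomorphic (Riemann's extension theorem, together with the identification of $\widehat{\mathcal{O}}_{X,x}$ with the integral closure of $\mathcal{O}_{X,x}$ in its total ring of fractions; cf. \cite{GR84}). Applying this at $x$ gives $\widehat{\mathcal{O}}_{X,x}=\mathcal{O}_{X,x}=R$. Consequently, for a nonzero ideal $\mathcal{I}\subset R$ with $r$ generators, the integral closures $\overline{\mathcal{I}^{k+q-1}}$ are taken inside $\widehat{\mathcal{O}}_{X,x}$, and Theorem \ref{BS_weakly} applies verbatim: one gets $\overline{\mathcal{I}^{k+q-1}}\subset\mathcal{I}^k$ for all $k\in\mathbb{N}$ with $q=\min\{n,r\}$, and hence the Brian\c{c}on-Skoda number $n-1=\dim R-1$; the optimality of $q$ is inherited from the regular case, which is itself a special instance of a normal weakly rational singularity.

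I do not expect a genuine obstacle here, since all the substance sits in Theorem \ref{BS_weakly}. The only points deserving a word of care are the equality $\widehat{\mathcal{O}}_{X,x}=\mathcal{O}_{X,x}$ forced by normality, and the routine remark that on normal spaces the notion of weakly rational singularity used in Theorem \ref{BS_weakly} coincides with the one appearing in the statement of the corollary.
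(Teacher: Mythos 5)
Your proposal is correct and is exactly the paper's intended derivation: the paper states the corollary as a direct consequence of Theorem \ref{BS_weakly}, the only point being that normality of $X$ at $x$ forces $\widehat{\mathcal{O}}_{X,x}=\mathcal{O}_{X,x}$, so the statement for the ring of weakly holomorphic functions specializes to the analytic local ring itself.
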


Motivated by Theorem \ref{BS_weakly}, a natural question is whether we could characterize weakly rational singularities by the original Brian\c{c}on-Skoda theorem. Concretely, we raise the following

\begin{question} \label{Q_SingBS}
Let $X$ be complex space of pure dimension $n$ with $x\in X$ a point and $\mathcal{I}\subset\widehat{\mathcal{O}}_{X,x}$ a nonzero ideal with $r$ generators.

Could we show that , $x\in X$ is a weakly rational singularity if and only if the original Brian\c{c}on-Skoda theorem holds for the Noetherian ring $\widehat{\mathcal{O}}_{X,x}$?
\end{question}

When $n=1$, we will obtain

\begin{theorem} \label{BS_1D}
Let $X$ be a complex curve with $x\in X$ a point. Then, the original Brian\c{c}on-Skoda theorem holds for the Noetherian ring $\widehat{\mathcal{O}}_{X,x}$. Moreover, the following statements are equivalent:

$(1)$ $x\in X$ is a weakly rational singularity.

$(2)$ $x\in X$ is a regular point.

$(3)$ The original Brian\c{c}on-Skoda theorem holds for the local ring $\mathcal{O}_{X,x}$; namely, $\overline{\mathcal{I}^k}=\mathcal{I}^k$ for any nonzero ideal $\mathcal{I}\subset\mathcal{O}_{X,x}$ and $k\in\mathbb{N}$.
\end{theorem}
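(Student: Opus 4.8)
The plan is to prove the three assertions in order: first the original Brian\c{c}on-Skoda theorem for $\widehat{\mathcal{O}}_{X,x}$ on a curve, then the chain of equivalences $(1)\Leftrightarrow(2)\Leftrightarrow(3)$. For the first assertion, I would invoke Theorem \ref{BS_weakly}: since a one-dimensional point on a reduced complex curve is automatically a weakly rational singularity (its normalization is smooth and the higher direct images $R^i\pi_*\omega_{\widetilde X}$ vanish trivially for $i\geq 1$ by dimension reasons, so the Grauert-Riemenschneider canonical sheaf computation goes through), Theorem \ref{BS_weakly} applies and gives $\overline{\mathcal{I}^{k+q-1}}\subset\mathcal{I}^k$ for all $k$, with $q=\min\{1,r\}=1$; hence $\overline{\mathcal{I}^k}\subset\mathcal{I}^k$, and the reverse inclusion being trivial we get $\overline{\mathcal{I}^k}=\mathcal{I}^k$ in $\widehat{\mathcal{O}}_{X,x}$. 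Alternatively, one can argue directly: $\widehat{\mathcal{O}}_{X,x}$ is the semilocalization of the integral closure of $\mathcal{O}_{X,x}$ at the points over $x$, hence a one-dimensional semilocal domain (or product of domains) whose normalization is regular, and a one-dimensional normal Noetherian local ring is a DVR, in which every ideal is principal and therefore integrally closed; so every ideal of $\widehat{\mathcal{O}}_{X,x}$ equals its own integral closure.

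For the equivalences, the implication $(2)\Rightarrow(1)$ is immediate since regular points are (weakly) rational singularities, and $(1)\Rightarrow(3)$ follows by applying Theorem \ref{BS_weakly} again but now noting that if $x$ is weakly rational then $\widehat{\mathcal{O}}_{X,x}=\mathcal{O}_{X,x}$—indeed weak rationality on a curve forces normality (the normalization map $\pi$ is finite and birational, and weak rationality together with one-dimensionality implies $\pi_*\mathcal{O}_{\widetilde X}=\mathcal{O}_X$, i.e. $X$ is already normal, hence smooth), so $\mathcal{O}_{X,x}$ is a DVR and the Brian\c{c}on-Skoda statement $\overline{\mathcal{I}^k}=\mathcal{I}^k$ holds. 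The implication $(3)\Rightarrow(2)$ is the substantive direction: assuming $\overline{\mathcal{I}^k}=\mathcal{I}^k$ for every nonzero ideal of $\mathcal{O}_{X,x}$ and $k\in\mathbb N$, I must deduce that $\mathcal{O}_{X,x}$ is a DVR. Take $\mathcal{I}=\mathfrak{m}$ the maximal ideal; then $\overline{\mathfrak m}=\mathfrak m$. More usefully, apply the hypothesis to the conductor or to $\mathfrak m^k$: in a one-dimensional local domain the condition $\overline{\mathfrak m^k}=\mathfrak m^k$ for all $k$ says the associated graded ring has no nilpotents in the relevant sense, and combined with the fact that $\overline{\mathfrak m}\supset \mathfrak c$ (the conductor, up to radicals) one forces the conductor to be trivial, i.e. $\mathcal{O}_{X,x}$ is integrally closed, hence a DVR, hence regular.

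The main obstacle is the clean execution of $(3)\Rightarrow(2)$. The cleanest route I see: if $\mathcal{O}_{X,x}$ is not regular, its normalization $\widetilde{\mathcal O}$ strictly contains $\mathcal O:=\mathcal{O}_{X,x}$, and there is a nonzero element $t\in\widetilde{\mathcal O}\setminus\mathcal O$ integral over $\mathcal O$; writing $t=a/b$ with $a,b\in\mathcal O$, one sees $a\in\overline{(b)}$. Now iterate: $t^k=a^k/b^k$ is still integral, so $a^k\in\overline{(b^k)}\subset\overline{(b)^k}$, and if the Brian\c{c}on-Skoda equality $\overline{(b)^k}=(b)^k$ held for all $k$ we would get $a^k\in (b^k)$ for all $k$, i.e. $t^k\in\mathcal O$ for all $k$, so $\mathcal O[t]$ is a finitely generated $\mathcal O$-module contained in... this by itself is automatic. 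The real leverage must come from choosing $b$ so that $(b)$ is the conductor or a power of $\mathfrak m$ and using that in a non-regular one-dimensional local ring the integral closure $\overline{\mathfrak m^k}$ strictly contains $\mathfrak m^k$ for $k$ large (since $\dim_\mathbb C \widetilde{\mathcal O}/\mathfrak m^k\widetilde{\mathcal O}$ grows linearly in $k$ with the right slope while $\dim_\mathbb C \mathcal O/\mathfrak m^k$ has a deficit measured by the delta-invariant, and $\overline{\mathfrak m^k}=\mathfrak m^k\widetilde{\mathcal O}\cap\mathcal O$ whenever $\widetilde{\mathcal O}$ is a DVR). That Hilbert-function comparison is the crux: it shows $\overline{\mathfrak m^k}\neq\mathfrak m^k$ for $k\gg 0$ precisely when the $\delta$-invariant is positive, i.e. when $x$ is singular, contradicting $(3)$. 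I would present exactly this argument, citing the standard identity $\overline{\mathfrak m^k}=\mathfrak m^k\widetilde{\mathcal O}\cap\mathcal O$ for curves (valuative criterion of integral closure with $\widetilde{\mathcal O}$ a product of DVRs) and the linear growth of lengths.
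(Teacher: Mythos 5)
Two issues, one minor and one fatal.

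First, your opening claim that ``a one-dimensional point on a reduced complex curve is automatically a weakly rational singularity'' is false and contradicts the paper's own Remark on rational singularities (part (2)): on a curve, $x$ is weakly rational if and only if it is regular. The relevant object is not $R^i\pi_*\omega_{\widetilde X}$ for $i\geq1$ but the inclusion $\pi_*\omega_{\widetilde X}=\omega_X^{\mathrm{GR}}\subset\omega_X$, whose cokernel has length equal to the $\delta$-invariant (for a node, $\omega_X$ contains Rosenlicht differentials with poles on the branches that are not locally $L^2$). You in fact use the correct equivalence ``weakly rational $\Rightarrow$ regular'' later, so your proposal is internally inconsistent. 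This does not sink the first assertion, because your alternative argument (that $\widehat{\mathcal{O}}_{X,x}$ is a finite product of DVRs, in which every ideal is integrally closed) is correct and is essentially the paper's argument in ring-theoretic clothing; the paper instead reduces to a principal ideal $(g)$ and uses the sup-norm characterization of integral closure to conclude $f/g\in\widehat{\mathcal{O}}_{X,x}$.

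The fatal gap is in $(3)\Rightarrow(2)$. Your proposed crux --- that $\overline{\mathfrak m^k}\neq\mathfrak m^k$ for $k\gg0$ precisely when $\delta>0$ --- is false. For the node $\mathbb{C}\{x,y\}/(xy)$ and the cusp $\mathbb{C}\{t^2,t^3\}$ one checks directly that $\overline{\mathfrak m^k}=\mathfrak m^k\widetilde{\mathcal O}\cap\mathcal O=\mathfrak m^k$ for \emph{every} $k$, although $\delta=1$; in Hilbert-function terms, $\dim_{\mathbb C}\mathcal O/\overline{\mathfrak m^k}=ek-\delta$ while $\dim_{\mathbb C}\mathcal O/\mathfrak m^k=ek-e_1$, and $e_1=\delta$ is perfectly possible, so the two lengths can agree for all large $k$. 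Thus testing $(3)$ on powers of the maximal ideal detects nothing. The paper instead tests $(3)$ with $k=1$ on the principal ideal generated by a generic linear form $\widehat z_1$: Weierstrass preparation shows $\overline{(\widehat z_1)}=\mathfrak m_{X,o}$, which strictly contains $(\widehat z_1)$ as soon as the embedding dimension is $\geq2$. Ironically, you had a complete proof in hand and walked past it: if $\mathcal O:=\mathcal O_{X,x}$ is not normal, pick $t=a/b\in\widetilde{\mathcal O}\setminus\mathcal O$ with $b$ a non-zerodivisor; the integral equation for $t$ over $\mathcal O$, cleared of denominators, gives $a\in\overline{(b)}$, and hypothesis $(3)$ with $k=1$ yields $\overline{(b)}=(b)$, hence $a\in(b)$ and $t\in\mathcal O$, a contradiction; since a normal curve germ is regular, $(2)$ follows. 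Either this or the paper's generic-linear-form argument should replace the Hilbert-function step.
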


When $n=2$, the answer of Question \ref{Q_SingBS} is positive, i.e.,

\begin{theorem} \label{BS_2D}
Let $X$ be a complex space of pure dimension two with $x\in X$ a point and $\mathcal{I}\subset\widehat{\mathcal{O}}_{X,x}$ a nonzero ideal with $r$ generators. Then, $x\in X$ is a weakly rational singularity if and only if the original Brian\c{c}on-Skoda theorem holds for the Noetherian ring $\widehat{\mathcal{O}}_{X,x}$.
\end{theorem}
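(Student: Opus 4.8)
The plan is to prove both implications separately, with the ``only if'' direction following from the general machinery of Theorem \ref{BS_weakly} and the ``if'' direction requiring a genuine two-dimensional argument. For the forward direction, suppose $x\in X$ is a weakly rational singularity. Since $X$ has pure dimension two, Theorem \ref{BS_weakly} applies verbatim and gives that the original Brian\c{c}on-Skoda theorem holds for $\widehat{\mathcal{O}}_{X,x}$, i.e. $\overline{\mathcal{I}^{k+q-1}}\subset\mathcal{I}^k$ for all $k$, with $q=\min\{2,r\}$. So this direction is essentially immediate and the content of the theorem lies entirely in the converse.

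For the converse, assume the original Brian\c{c}on-Skoda theorem holds for $\widehat{\mathcal{O}}_{X,x}$; we must deduce weak rationality. First I would recall the characterization of weakly rational singularities: $x\in X$ is weakly rational if and only if for some (equivalently any) resolution $\pi:\widetilde{X}\to X$ one has $\pi_*\omega_{\widetilde X}=\widehat{\mathcal{O}}_{X}\cdot\omega_X$ near $x$ (equivalently, the Grauert-Riemenschneider canonical sheaf equals $\widehat{\mathcal{O}}_X\cdot\omega_X$), which in dimension two is the statement that $R^1\pi_*\mathcal{O}_{\widetilde X}$-type obstructions vanish after normalization. The key device will be the Skoda-type division theorem (Theorem \ref{SkodaDivision}) together with the weak holomorphicity and integral closedness of Grauert-Riemenschneider multiplier ideals (Theorem \ref{GR_weakly}): for a generic ideal $\mathfrak{a}\subset\widehat{\mathcal{O}}_{X,x}$ one has, for the trivial weight $\varphi\equiv 0$ and $k\ge q$,
$$\mathscr{I}_{\text{GR}}(k\varphi_{\mathfrak a})=\mathfrak{a}\cdot\mathscr{I}_{\text{GR}}((k-1)\varphi_{\mathfrak a}),$$
and by a standard iteration $\mathscr{I}_{\text{GR}}(k\varphi_{\mathfrak a})=\mathfrak{a}^{k-q+1}\cdot\mathscr{I}_{\text{GR}}((q-1)\varphi_{\mathfrak a})$. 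The point is that $\mathscr{I}_{\text{GR}}((q-1)\varphi_{\mathfrak a})$ contains $\mathbbm{1}$ precisely when $x\in X$ is weakly rational (this is the mechanism behind Theorem \ref{BS_weakly}); conversely, when $x$ fails to be weakly rational the strict inclusion $\widehat{\mathcal{O}}_{X,x}\cdot\omega_{X,x}\subsetneq (\pi_*\omega_{\widetilde X})_x$ must be detected by some ideal. So the strategy is the contrapositive: assuming $x$ is \emph{not} weakly rational, I would exhibit an explicit ideal $\mathcal{I}$ (built from a local system of parameters adapted to the exceptional divisor of a resolution) together with an element $h\in\overline{\mathcal{I}^{k+q-1}}\setminus\mathcal{I}^k$ for suitable $k$. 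The construction of $h$ uses the $L^2$-theoretic description of integral closure via the Grauert-Riemenschneider multiplier ideal: $h\in\overline{\mathcal{I}^m}$ forces $h\in\mathscr{I}_{\text{GR}}(m\varphi_{\mathcal I})=\mathfrak{I}\cdot(\text{stuff})$, and the failure of weak rationality produces an element of $(\pi_*\omega_{\widetilde X})_x$ not lying in $\widehat{\mathcal{O}}_{X,x}\cdot\omega_{X,x}$ that translates, via division, into a Brian\c{c}on-Skoda violation.

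The main obstacle will be the converse in the case where the failure of weak rationality is ``subtle'' — for instance when $X$ is already normal so that $\widehat{\mathcal{O}}_{X,x}=\mathcal{O}_{X,x}$, and one needs to produce a genuine ideal-theoretic violation of $\overline{\mathcal{I}^{k+1}}\subset\mathcal{I}^k$ purely from the non-vanishing of $R^1\pi_*\mathcal{O}_{\widetilde X}$ (equivalently $p_g>0$ for the singularity, or more precisely the arithmetic genus being positive). Here I expect to need the two-dimensional resolution structure quite concretely: on a good resolution the exceptional curve configuration, the structure of $\pi_*\omega_{\widetilde X}$ via the canonical cycle $Z_K$, and the Artin/Laufer theory of rational surface singularities all enter. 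The cleanest route is probably to show that if the original Brian\c{c}on-Skoda theorem holds then, testing against $\mathfrak{m}$-primary ideals $\mathcal{I}$ generated by two generic elements, one gets $\overline{\mathcal{I}^{k+1}}=\mathcal{I}\cdot\overline{\mathcal{I}^k}$ for all large $k$, i.e. the Rees algebra of $\overline{\mathcal{I}^{\bullet}}$ is generated in low degree, and then invoke that this forces the reduction number and the normalization data to be compatible with $\mathscr{I}_{\text{GR}}(0)=\widehat{\mathcal{O}}_{X,x}$ containing the unit, which is exactly weak rationality — passing through the resolution and the coherence/strong openness in Proposition \ref{property} to make the limit argument rigorous.
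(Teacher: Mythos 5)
The forward direction of your proposal (weakly rational $\Rightarrow$ Brian\c{c}on--Skoda via Theorem \ref{BS_weakly}) matches the paper and is fine. The problem is the converse, which you correctly identify as the real content but then only sketch: you propose to run the contrapositive by exhibiting, from the failure of weak rationality, an explicit ideal $\mathcal{I}$ and an element $h\in\overline{\mathcal{I}^{k+q-1}}\setminus\mathcal{I}^k$, and you explicitly flag that the hard case is a normal surface singularity with $R^1\pi_*\mathcal{O}_{\widetilde X}\neq 0$, where such a violation must be manufactured ``purely'' from $p_g>0$. You never actually produce $\mathcal{I}$ or $h$; the phrases ``I would exhibit,'' ``I expect to need,'' and ``the cleanest route is probably'' mark exactly the step that is missing. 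Note also that the Skoda-type division theorem and the GR multiplier ideal machinery of the paper only yield containments in the direction weak rationality $\Rightarrow$ Brian\c{c}on--Skoda; they do not by themselves show that non-weak-rationality is detected by integral closures, so the iteration $\mathscr{I}_{\mathrm{GR}}(k\varphi_{\mathfrak a})=\mathfrak a^{k-q+1}\cdot\mathscr{I}_{\mathrm{GR}}((q-1)\varphi_{\mathfrak a})$ you write down does not close the loop.

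The paper handles this differently and much less constructively: it first reduces to the normal case by passing to the normalization $\xi:\widehat X\to X$ (using Proposition \ref{invariance_weakly} and the decomposition $\widehat{\mathcal{O}}_{X,x}\cong\bigoplus_j\mathcal{O}_{\widehat X,\hat x_j}$ to transport ideals and their integral closures across $\xi$ via Corollary \ref{IC_weakly}); then, for a normal (hence Cohen--Macaulay) surface point, it invokes Artin's algebraization theorem to reduce to an affine algebraic variety and quotes Shibata's Theorem 1.1 in \cite{Shibata17}, which states that validity of the original Brian\c{c}on--Skoda theorem for such a local ring essentially of finite type over $\mathbb{C}$ forces the singularity to be rational, together with the equivalence of algebraic and analytic rationality from \cite{K-M98}. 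So the crucial implication you would need to prove by hand is precisely the imported algebraic theorem. Your proposal as written has a genuine gap at this point; to repair it you would either need to carry out the Artin/Laufer-type construction you allude to (which amounts to reproving Shibata's result analytically) or to perform the normalization-plus-algebraization reduction and cite the algebraic characterization, as the paper does.
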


Due to the Cohen-Macaulayness of normal complex surfaces, as an application of Theorem \ref{BS_2D}, we have

\begin{corollary} \emph{(\cite{Li_BS}, Theorem 1.5).}
Let $X$ be a normal complex surface with $x\in X$ a point. Then, $x\in X$ is a rational singularity if and only if the original Brian\c{c}on-Skoda theorem holds for the local ring $\mathcal{O}_{X,x}$.
\end{corollary}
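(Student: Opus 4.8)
The implication ``$x$ weakly rational $\Rightarrow$ the original Briançon--Skoda theorem for $\widehat{\mathcal{O}}_{X,x}$'' is Theorem \ref{BS_weakly} specialized to $n=2$, so the whole burden is the converse, and it is here that two-dimensionality is decisive. I would argue by contraposition: assuming $x$ is \emph{not} a weakly rational singularity, I would produce a nonzero ideal $\mathcal{I}\subset\widehat{\mathcal{O}}_{X,x}$ with $r\ge 2$ generators (so $q=\min\{2,r\}=2$) and an integer $k$ with $\overline{\mathcal{I}^{k+1}}\not\subseteq\mathcal{I}^{k}$, which is the only way the original theorem can fail for a two-dimensional ring.

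First I would reduce to a normal surface germ. Since the weakly holomorphic functions are the structure sheaf of the normalization $\nu\colon X^{\nu}\to X$, the ring $\widehat{\mathcal{O}}_{X,x}=\prod_{y\mapsto x}\mathcal{O}_{X^{\nu},y}$ is the semilocal ring of the normal (hence Cohen--Macaulay) complex surface $X^{\nu}$, and a counterexample to the Briançon--Skoda inclusion for $\widehat{\mathcal{O}}_{X,x}$ exists once one exists in some factor $\mathcal{O}_{X^{\nu},y}$. On the other hand, by Definition \ref{MIS_GR} together with Theorem \ref{GR_weakly}, weak rationality of $x$ is equivalent to $\mathscr{I}_{\text{GR}}(0)_{x}=\widehat{\mathcal{O}}_{X,x}$; computing $\mathscr{I}_{\text{GR}}(0)$ on a resolution $\pi\colon\widetilde{X}\to X$ which factors through $\nu$ via $\rho\colon\widetilde{X}\to X^{\nu}$, and using that a normal surface singularity is rational iff $\rho_{*}\omega_{\widetilde{X}}=\omega_{X^{\nu}}$ (this is where Cohen--Macaulayness enters), one finds that $x\in X$ is weakly rational exactly when each point of $X^{\nu}$ over $x$ is a rational singularity, i.e.\ has vanishing geometric genus. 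So it suffices to show: a two-dimensional normal analytic local ring $(R,\mathfrak{m})$ that is not a rational singularity fails the original Briançon--Skoda theorem.

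For this I would combine the Skoda-type division Theorem \ref{SkodaDivision} with the surface-singularity structure theory. Take $N\gg 0$ and $r=2$ general elements of $\mathfrak{m}^{N}$ generating a reduction $\mathfrak{a}$ of $\mathfrak{m}^{N}$; then $\varphi_{\mathfrak{a}}=N\varphi_{\mathfrak{m}}+O(1)$ on $X_{\text{reg}}$ near $x$, so $\mathscr{I}_{\text{GR}}(m\varphi_{\mathfrak{a}})=\mathscr{I}_{\text{GR}}(mN\varphi_{\mathfrak{m}})$ for all $m$, and Theorem \ref{SkodaDivision} with the zero weight, iterated, yields $\mathscr{I}_{\text{GR}}\big((k+1)\varphi_{\mathfrak{a}}\big)=\mathfrak{a}^{k}\,\mathscr{I}_{\text{GR}}\big(\varphi_{\mathfrak{a}}\big)$ for all $k\in\mathbb{N}$. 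On a resolution $\rho$ with $\mathfrak{m}\mathcal{O}_{\widetilde{X}}=\mathcal{O}_{\widetilde{X}}(-E)$, relative Grauert--Riemenschneider vanishing expresses the left side in terms of $\rho_{*}\big(\omega_{\widetilde{X}}\otimes\mathcal{O}_{\widetilde{X}}(-mNE)\big)$; when the geometric genus of $X^{\nu}$ over $x$ is positive, the cohomology obstructing $\rho_{*}\omega_{\widetilde{X}}=\omega_{X^{\nu}}$ persists after twisting by $-mNE$ for a suitable $m=k+1$, and it produces a weakly holomorphic germ $f$ with $\log|f|\le(k+1)\varphi_{\mathfrak{a}}+O(1)$ --- hence $f\in\overline{\mathfrak{a}^{k+1}}$ --- that does not lie in $\mathfrak{a}^{k}$, giving $\overline{\mathfrak{a}^{k+1}}\not\subseteq\mathfrak{a}^{k}$ as required. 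The hard part is precisely this last extraction: translating the non-vanishing $R^{1}\rho_{*}\mathcal{O}_{\widetilde{X}}\neq 0$ into an honest element of $\overline{\mathfrak{a}^{k+1}}\setminus\mathfrak{a}^{k}$, which requires tracking the discrepancy cycle $K_{\widetilde{X}/X^{\nu}}$ against $-mNE$ along the exceptional locus and invokes results on rational surface singularities (integral closedness of the powers of $\mathfrak{m}$, reduction numbers of complete $\mathfrak{m}$-primary ideals) with no analogue in dimension $\ge 3$ --- which is why Question \ref{Q_SingBS} is still open for $n\ge 3$. By contrast, the reduction to the normalization and the iteration of the Skoda division theorem are purely formal.
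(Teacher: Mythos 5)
Your forward direction and the reduction to the normalization are fine; in fact, for a normal surface one has $\widehat{\mathcal{O}}_{X,x}=\mathcal{O}_{X,x}$, and Cohen--Macaulayness of normal surfaces makes ``weakly rational'' and ``rational'' coincide, so the corollary is nothing but the normal case of Theorem \ref{BS_2D}. The problem is that your argument for the converse is not a proof: the step you yourself label ``the hard part'' --- extracting from $R^{1}\rho_{*}\mathcal{O}_{\widetilde X}\neq 0$ an honest element of $\overline{\mathfrak{a}^{k+1}}\setminus\mathfrak{a}^{k}$ --- is precisely the entire content of the implication, and you only assert that the cohomological obstruction ``persists after twisting'' and ``produces'' such a germ, without any construction. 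Note moreover that the Skoda-type identity $\mathscr{I}_{\mathrm{GR}}\big((k+1)\varphi_{\mathfrak a}\big)=\mathfrak{a}^{k}\,\mathscr{I}_{\mathrm{GR}}(\varphi_{\mathfrak a})$ gives you no purchase here: the inclusion $\overline{\mathfrak{a}^{k+1}}\subset\mathscr{I}_{\mathrm{GR}}\big((k+1)\varphi_{\mathfrak a}\big)$ exploited in the proof of Theorem \ref{BS_weakly} is exactly the point where weak rationality is invoked (one needs $\sigma\wedge\overline\sigma$ itself to be locally integrable), and it is unavailable in your contrapositive setting, where $\overline{\mathfrak{a}^{k+1}}$ can be strictly larger than the Grauert--Riemenschneider multiplier ideal. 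So the division theorem does not see the elements you need to exhibit.

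The paper does not attempt this analytic extraction at all. It deduces the corollary in one line from Theorem \ref{BS_2D}, whose sufficiency direction is proved by first invoking Artin's algebraization theorem to replace the normal surface germ by a point on a normal affine algebraic surface over $\mathbb{C}$, and then quoting Theorem 1.1 of \cite{Shibata17}: for a two-dimensional normal (hence Cohen--Macaulay) local ring essentially of finite type over $\mathbb{C}$, validity of the original Brian\c{c}on--Skoda theorem forces the singularity to be rational; algebraic and analytic rationality then agree by \cite{K-M98}. If you insist on your analytic route, you must actually carry out the construction of a violating ideal (this is essentially the surface-theoretic input behind Shibata's result, via complete $\mathfrak{m}$-primary ideals and their cores); otherwise the correct and complete proof is simply to cite Theorem \ref{BS_2D} together with the two observations above.
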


\begin{remark}
The above corollary does not hold when $\dim X\geq3$; see \cite{A-S84} for some examples of normal weakly rational singularities which are not rational.
\end{remark}

\section{Terminologies and basic facts}

Firstly, let us start by recalling some concepts and notations that will be used throughout this paper. Various classes of the singularities of complex spaces are given exactly as in the algebraic context (cf. \cite{A-S84, K-M98}).

\begin{definition} \label{canonical}
Let $X$ be a complex space of pure dimension $n$. The \emph{Grothendieck canonical sheaf} (or \emph{dualizing sheaf}) $\omega_X$ on $X$ is defined by $$\Gamma(U,\omega_X)=\text{Ext}_{\mathcal{O}_D}^{N-n}(\mathcal{O}_U,\Omega_D^N){\big|}_U,$$ where $U\subset X$ is an open subset isomorphic to a complex model space in a domain $D$ of some $\mathbb{C}^N$. Moreover, $\omega_X$ is independent of the local embedding; and $\omega_X\cong i_*(\omega_{X_\text{reg}})$ for normal $X$, where $i:X_\text{reg}\hookrightarrow X$ is the natural inclusion.

The \emph{Grauert-Riemenschneider canonical sheaf} $\omega_X^{\text{GR}}$ on $X$ is defined by
$$\Gamma(U,\omega_X^{\text{GR}})=\{\sigma\in\Gamma(U\cap X_\text{reg},\omega_{X_\text{reg}})\ |\
(\sqrt{-1})^{n^2}\sigma\wedge\overline\sigma\in L_\text{loc}^1(U)\}$$
for any open subset $U\subset X$. Moreover, it follows that $\omega_X^{\text{GR}}=\pi_*\omega_{\widetilde X}$ for any resolution of singularities $\pi:\widetilde X\to X$ and $\omega_X^{\text{GR}}\subset\omega_X$ (cf. \cite{HenPa99}).
\end{definition}

\begin{remark} (cf. \cite{De10}, Remark 5.17).
Given $\varphi\in L_\text{loc}^1(X_\text{reg})$, we denote by $\omega_X^{\text{GR}}(\varphi)$ the $\mathcal{O}_X$-sheaf on $X$ defined by
$$\Gamma(U,\omega_X^{\text{GR}}(\varphi))=\{\sigma\in\Gamma(U\cap X_\text{reg},\omega_{X_\text{reg}})\ |\
(\sqrt{-1})^{n^2}e^{-2\varphi}\sigma\wedge\overline\sigma\in L_\text{loc}^1(U)\}$$
for any open subset $U\subset X$. In particular, for any $\varphi\in\text{Psh}(X)$, the $\mathcal{O}_X$-sheaf $\omega_X^{\text{GR}}(\varphi)$ satisfies the functoriality property, i.e.,
$$\pi_*\omega_{\widetilde X}^{\text{GR}}(\varphi\circ\pi)=\omega_X^{\text{GR}}(\varphi),$$
where $\pi:\widetilde X\to X$ is any proper modification.
\end{remark}

As a result, we have the following version of Nadel vanishing theorem on complex spaces with singularities (cf. \cite{De10}, Remark 5.17; also \cite{Ohsawa_book18}, Theorem 2.25).

\begin{theorem} \label{NaDe_vanishing}
Let $(X,\omega)$ be a weakly pseudoconvex K\"ahler space of pure dimension $n$. Let $(L,e^{-2\varphi_L})$ be a holomorphic line bundle on $X$ equipped with a singular Hermitian metric of plurisubharmonic weight $\varphi_L$ and $\sqrt{-1}\partial\overline\partial\varphi_L\geq\varepsilon\omega$ for some positive continuous function $\varepsilon$ on $X_\text{\emph{reg}}$. Then,
$$H^q\big(X,\omega_X^{\emph{GR}}(\varphi_L)\otimes\mathcal{O}_X(L)\big)=0$$
for all $q\geq1$.
\end{theorem}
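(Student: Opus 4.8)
The plan is to reduce the statement to the classical Nadel vanishing theorem on a smooth weakly pseudoconvex Kähler manifold via a resolution of singularities, exploiting the functoriality property of the Grauert-Riemenschneider canonical sheaf with a weight, $\pi_*\omega_{\widetilde X}^{\text{GR}}(\varphi_L\circ\pi)=\omega_X^{\text{GR}}(\varphi_L)$, recorded in the preceding remark. First I would fix a resolution $\pi:\widetilde X\to X$ which is an isomorphism over $X_\text{reg}$; since $X$ is paracompact and weakly pseudoconvex, $\widetilde X$ inherits an exhaustion function whose composition with $\pi$ is plurisubharmonic, so $\widetilde X$ is again weakly pseudoconvex. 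On $\widetilde X$, the pulled-back line bundle $\pi^*L$ carries the singular metric $e^{-2\varphi_L\circ\pi}$, and $\varphi_L\circ\pi$ is plurisubharmonic with $\sqrt{-1}\partial\overline\partial(\varphi_L\circ\pi)\geq 0$; moreover away from the exceptional locus the strict positivity $\sqrt{-1}\partial\overline\partial\varphi_L\geq\varepsilon\omega$ is preserved.

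The next step is to supply the missing strict positivity along the exceptional divisor $E$ of $\pi$. Here I would use a standard trick: perturb the metric by adding a small multiple of a function of the form $\theta(\psi)$, where $\psi$ is a local potential for an effective divisor supported on $E$ with $-E$ relatively ample (or more robustly, blow up further so that the exceptional locus is a simple normal crossings divisor and use Kählerness of $\widetilde X$ together with a quasi-psh function with analytic singularities along $E$). After this perturbation by an arbitrarily small amount, one obtains on $\widetilde X$ a singular metric on $\pi^*L$ (twisted by an arbitrarily small negative multiple of an exceptional divisor, which does not affect the multiplier ideal for small enough coefficient) whose curvature is strictly positive relative to a Kähler metric on $\widetilde X$. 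Applying the classical Nadel vanishing theorem on the smooth weakly pseudoconvex Kähler manifold $\widetilde X$ then yields
$$H^q\big(\widetilde X,\omega_{\widetilde X}\otimes\mathcal{O}_{\widetilde X}(\pi^*L)\otimes\mathscr{I}(\varphi_L\circ\pi)\big)=0,\quad q\geq 1,$$
where $\mathscr{I}(\cdot)$ is the usual (Lebesgue) multiplier ideal; note $\omega_{\widetilde X}\otimes\mathscr{I}(\varphi_L\circ\pi)$ is precisely $\omega_{\widetilde X}^{\text{GR}}(\varphi_L\circ\pi)$ on the smooth space.

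Finally I would descend this vanishing to $X$. By Grauert-Riemenschneider vanishing (in the relative/weakly pseudoconvex form invoked earlier in the introduction for the Skoda division theorem), the higher direct images $R^j\pi_*\big(\omega_{\widetilde X}^{\text{GR}}(\varphi_L\circ\pi)\otimes\mathcal{O}_{\widetilde X}(\pi^*L)\big)$ vanish for $j\geq 1$; combined with the projection formula and the functoriality $\pi_*\omega_{\widetilde X}^{\text{GR}}(\varphi_L\circ\pi)\otimes\mathcal{O}_X(L)=\omega_X^{\text{GR}}(\varphi_L)\otimes\mathcal{O}_X(L)$, the Leray spectral sequence degenerates and gives the isomorphism $H^q\big(X,\omega_X^{\text{GR}}(\varphi_L)\otimes\mathcal{O}_X(L)\big)\cong H^q\big(\widetilde X,\omega_{\widetilde X}^{\text{GR}}(\varphi_L\circ\pi)\otimes\mathcal{O}_{\widetilde X}(\pi^*L)\big)=0$ for $q\geq 1$. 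The main obstacle I anticipate is the positivity bookkeeping in the middle step: ensuring that the perturbation needed to gain strict positivity transverse to the exceptional locus can be made small enough not to change the multiplier ideal $\mathscr{I}(\varphi_L\circ\pi)$ (so that the GR sheaf is genuinely preserved), while keeping everything compatible with the weak pseudoconvexity of $\widetilde X$; this is where one must be careful with the relative ampleness of $-E$ versus merely Kähler, and may require passing to a further blow-up with SNC exceptional divisor.
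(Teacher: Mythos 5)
The paper does not actually prove this statement: it is quoted from \cite{De10} (Remark 5.17) and \cite{Ohsawa_book18} (Theorem 2.25), with the phrase ``as a result'' pointing to the functoriality $\pi_*\omega_{\widetilde X}^{\text{GR}}(\varphi\circ\pi)=\omega_X^{\text{GR}}(\varphi)$ recorded just before it. Your skeleton --- resolve, prove Nadel vanishing upstairs, descend via functoriality, the projection formula, Matsumura's relative vanishing (Theorem \ref{G-R_vanishing}) and the degeneration of the Leray spectral sequence --- is the standard resolution-based derivation, and your descent step is correct exactly as written: $R^j\pi_*\big(\omega_{\widetilde X}\otimes\mathcal{O}(\pi^*L)\otimes\mathscr{I}(\varphi_L\circ\pi)\big)=0$ for $j\geq1$ needs only the semi-positivity of $\varphi_L\circ\pi$, so no perturbation is required there.

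The middle step you flag is, however, a genuine gap as proposed, for two concrete reasons. First, $\widetilde X$ is noncompact, so there is in general no single constant $\delta>0$ such that $\pi^*\big(\sqrt{-1}\partial\overline\partial\varphi_L\big)+\delta\sqrt{-1}\partial\overline\partial\chi\geq\widetilde\varepsilon\widetilde\omega$ globally: the negativity of $\chi$ in the horizontal directions must be absorbed by $(\varepsilon\circ\pi)\pi^*\omega$, and $\varepsilon$ is only assumed positive and continuous on $X_{\text{reg}}$, so it may decay both at infinity and on approach to $X_{\text{sing}}$. Second, even where the curvature can be fixed, the threshold below which $\delta\chi$ leaves $\mathscr{I}(\varphi_L\circ\pi)$ unchanged is only locally uniform; its existence on compact sets already requires the strong openness theorem (\cite{G-Z_open}) combined with H\"older's inequality, and there need not be a positive global infimum of admissible $\delta$'s. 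The standard repairs are either (i) to run the entire argument on the relatively compact sublevel sets $X_c=\{\psi<c\}$ of the psh exhaustion, where both constants can be chosen uniformly, and then pass to the limit $c\to\infty$ with uniform $L^2$ bounds as in Demailly's proof of the smooth case; or (ii) closer to what \cite{De10}, Remark 5.17 actually does, to avoid the resolution and the perturbation altogether by solving $\overline\partial$ directly on $X_{\text{reg}}$, which carries a complete K\"ahler metric locally on $X$ because $X_{\text{sing}}$ is analytic; the sheaf $\omega_X^{\text{GR}}(\varphi_L)$ is by construction the $L^2$ sheaf for which the resulting $L^2$ Dolbeault complex is a resolution. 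Either route closes the argument; the single global small perturbation, as you describe it, does not.
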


\begin{definition} (cf. \cite{GPR94, Ishii_sing}).
Let $X$ be a complex space of pure dimension. $X$ is said to be \emph{$\mathbb{Q}$-Gorenstein} if there exists a positive integer $r$ such that $\omega_X^{[r]}:=(\omega_{X}^{\otimes r})^{**}$, the bidual of $\omega_{X}^{\otimes r}$, is a locally free sheaf of rank one. We call the minimum of such $r$ the \emph{index of $X$}, and then $X$ is called \emph{$r$-Gorenstein}. $X$ is called \emph{Gorenstein} if it is $1$-Gorenstein and Cohen-Macaulay.

A local generator $\sigma$ of $\omega_X^{[r]}$ defines a holomorphic pluricanonical form without zeros on $X_{\text{reg}}$. In addition, if $X$ is normal, it follows that $\omega_X^{[r]}:=i_*(\omega_{X_\text{reg}}^{\otimes r})$, where $i:X_\text{reg}\to X$ the natural inclusion.
\end{definition}

\begin{remark}
We need point out here that our definition of $\mathbb{Q}$-Gorenstein complex space $X$ does not require the normality of $X$ as usual.
\end{remark}

\begin{definition} (cf. \cite{L-Z}). \label{MIS_QG}
Let $X$ be a $\mathbb{Q}$-Gorenstein complex space of dimension $n$ and $\varphi\in L_\text{loc}^1(X_\text{reg})$ with respect to the Lebesgue measure. The \emph{multiplier ideal sheaf} associated to $\varphi$ on $X$ is defined to be the $\mathcal{O}_X$-submudle $\mathscr{I}(\varphi)\subset\mathscr{M}_X$ of germs of meromorphic functions $f\in\mathscr{M}_{X,x}$ such that
$$|f|^2e^{-2\varphi}\Big((\sqrt{-1})^{rn^2}\sigma\wedge\overline\sigma\Big)^{\frac{1}{r}}$$
is locally integrable at $x$, where $\sigma$ is a local generator of $\omega_X^{[r]}$ at $x$ and $r$ is the index near $x$. Here, $\varphi$ is regarded as the weight function of $\mathscr{I}(\varphi)$. If $\varphi\equiv-\infty$ on $X$, we put $\mathscr{I}(\varphi)=0$.
\end{definition}

\begin{remark}
When $X$ is smooth and $\varphi\in\text{Psh}(X)$, it follows that $\mathscr{I}(\varphi)\subset\mathcal{O}_X$ is nothing but the multiplier ideal sheaf introduced by Nadel (see also \cite{De10}).
\end{remark}

\begin{definition} (cf. \cite{A-S84}).
Let $X$ be a complex space of pure dimension $n$ and $x\in X$ a point. Let $\pi:\widetilde X\to X$ be a resolution of singularities of $X$.

The point $x$ is called a \emph{weakly rational singularity} of $X$ if $\omega_X=\omega_X^{\text{GR}}$ near the point $x$, i.e., every holomorphic section of $\omega_X$ on a neighborhood of $x$ is locally square integrable around $x$.

The point $x$ is called a \emph{rational singularity} of $X$ if one of the following equivalent conditions holds:

$(1)$ $\omega_X=\omega_X^{\text{GR}}$ near the point $x$ and $\mathcal{O}_{X,x}$ is a Cohen-Macaulay local ring;

$(2)$ $x$ is a normal point of $X$ and $(R^{q}\pi_*\mathcal{O}_{\widetilde X})_x=0$ for all $q>0$.

We say that $X$ has at most (weakly) rational singularities if each point in $X$ is a (weakly) rational singularity.
\end{definition}

\begin{remark} (cf. \cite{A-S84}). \label{rational}
$(1)$ The above definition is independent of the resolution $\pi$ and every regular point of $X$ is a (weakly) rational singularity.

$(2)$ If $n=1$, then $x$ is a weakly rational singularity iff $x$ is a rational singularity iff $x$ is a regular point of $X$.

$(3)$ If $n\geq2$, then $x$ is a weakly rational singularity iff $(R^{n-1}\pi_*\mathcal{O}_{\widetilde X})_x=0$.
\end{remark}

\begin{proposition} \label{invariance_weakly}
Let $X$ be a complex space of pure dimension $n\ (n\geq2)$ and let $\xi:\widehat X\to X$ be a one-sheeted analytic covering of $X$. Then, $X$ has at most weakly rational singularities if and only if $\widehat X$ has at most weakly rational singularities. In further, if $X$ has at most weakly rational singularities, we have $\omega_{X}=\xi_*\omega_{\widehat X}$.
\end{proposition}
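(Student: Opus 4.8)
The plan is to reduce everything to a single smooth model that dominates both $X$ and $\widehat X$, and then transport the relevant cohomological information across the finite map $\xi$, using that a one-sheeted analytic covering is finite, surjective and bimeromorphic. Since $\xi$ is bimeromorphic and finite, $\widehat X$ is reduced of pure dimension $n$ and its irreducible components correspond bijectively to those of $X$. Fix a resolution of singularities $\pi\colon\widetilde X\to\widehat X$. Then $\rho:=\xi\circ\pi\colon\widetilde X\to X$ is proper and bimeromorphic with smooth source, hence is a resolution of $X$ in the sense used in Remark \ref{rational}. By the independence of the characterization in Remark \ref{rational}(3) of the chosen resolution, it suffices to detect weak rationality of $X$ (resp.\ $\widehat X$) through the sheaf $R^{n-1}\rho_*\mathcal{O}_{\widetilde X}$ (resp.\ $R^{n-1}\pi_*\mathcal{O}_{\widetilde X}$).

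Next I would apply the Leray spectral sequence to the composition $\rho=\xi\circ\pi$. Because $\xi$ is finite, $R^p\xi_*\mathscr{G}=0$ for all $p\geq1$ and all coherent $\mathscr{G}$, so the spectral sequence degenerates and gives $R^q\rho_*\mathcal{O}_{\widetilde X}\cong\xi_*R^q\pi_*\mathcal{O}_{\widetilde X}$ for every $q$, in particular for $q=n-1$. Moreover, still because $\xi$ is finite, for any coherent sheaf $\mathscr{G}$ on $\widehat X$ one has $(\xi_*\mathscr{G})_x=\bigoplus_{\widehat x\in\xi^{-1}(x)}\mathscr{G}_{\widehat x}$ as $\mathcal{O}_X$-modules; applied to $\mathscr{G}=R^{n-1}\pi_*\mathcal{O}_{\widetilde X}$ (coherent by Grauert's theorem), this shows that $\xi_*R^{n-1}\pi_*\mathcal{O}_{\widetilde X}$ vanishes if and only if $R^{n-1}\pi_*\mathcal{O}_{\widetilde X}$ vanishes. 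Chaining these equivalences with Remark \ref{rational}(3) yields: $X$ has at most weakly rational singularities $\iff R^{n-1}\rho_*\mathcal{O}_{\widetilde X}=0 \iff \xi_*R^{n-1}\pi_*\mathcal{O}_{\widetilde X}=0 \iff R^{n-1}\pi_*\mathcal{O}_{\widetilde X}=0 \iff \widehat X$ has at most weakly rational singularities.

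For the identity of dualizing sheaves I would use that the Grauert-Riemenschneider canonical sheaf is computed by any resolution (Definition \ref{canonical}): $\omega_X^{\mathrm{GR}}=\rho_*\omega_{\widetilde X}=\xi_*\pi_*\omega_{\widetilde X}=\xi_*\omega_{\widehat X}^{\mathrm{GR}}$. If $X$ has at most weakly rational singularities then $\omega_X=\omega_X^{\mathrm{GR}}$ by definition, and by the equivalence just proved $\widehat X$ also has at most weakly rational singularities, so $\omega_{\widehat X}=\omega_{\widehat X}^{\mathrm{GR}}$. Substituting gives $\omega_X=\omega_X^{\mathrm{GR}}=\xi_*\omega_{\widehat X}^{\mathrm{GR}}=\xi_*\omega_{\widehat X}$, which is the asserted formula.

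I expect the main point requiring care to be the bookkeeping around the common resolution together with the two foundational facts about finite morphisms — the vanishing $R^{>0}\xi_*=0$ and the stalkwise description of $\xi_*$ — which, although standard, are exactly what makes the cohomology transfer work; the rest is formal manipulation with the characterizations already recorded in the excerpt. One should also make sure that ``one-sheeted analytic covering'' genuinely forces $\xi$ to be bimeromorphic, since this is what guarantees that $\rho=\xi\circ\pi$ is a resolution of $X$ and is the only place the hypothesis is essentially used (note that the hypothesis $n\geq2$ enters only through the clean single-degree criterion of Remark \ref{rational}(3)).
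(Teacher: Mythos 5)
Your proposal is correct and follows essentially the same route as the paper: both pass through the common resolution $\rho=\xi\circ\pi$, identify $R^{n-1}\rho_*\mathcal{O}_{\widetilde X}$ with $\xi_*R^{n-1}\pi_*\mathcal{O}_{\widetilde X}$ (the paper by evaluating sections over small neighborhoods $U$ for which $\xi^{-1}(U)$ splits into disjoint pieces around the fiber points, you via the degenerate Leray spectral sequence for the finite map $\xi$ plus the stalkwise description of $\xi_*$ --- the same computation in different packaging), and then conclude via Remark \ref{rational}(3). The derivation of $\omega_X=\xi_*\omega_{\widehat X}$ from $\omega_X^{\mathrm{GR}}=\xi_*\omega_{\widehat X}^{\mathrm{GR}}$ and weak rationality of both spaces is likewise identical to the paper's.
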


\begin{proof}
Since $\xi$ is a finite holomorphic mapping, then for any $x\in X$, $\xi^{-1}(x)$ has a fundamental system of neighborhoods consisting of open sets of the form $\xi^{-1}(U)$ with open set $U\subset X$ containing $x$. Moreover, if $\hat x_1,...,\hat x_m$ are the distinct points of a fiber $\xi^{-1}(x)$, then it follows that there exists sufficiently small open neighborhood $U$ of $x$ in $X$ such that $\xi^{-1}(U)=\bigcup_{k=1}^m\widehat U_k$, where $\widehat U_1,...\widehat U_m$ are pairwise disjoint open neighborhoods of $\hat x_1,...,\hat x_m$ in $\widehat X$.

Let $\pi:\widetilde X\to\widehat X$ be a resolution of singularities of $\widehat X$ and then $\xi\circ\pi:\widetilde X\to X$ is a desingularization of $X$. Note that for each point $x\in X$ and any sufficiently small neighborhood $U$ of $x$ in $X$, we have
\begin{equation*}
\begin{split}
\left(R^{n-1}(\xi\circ\pi)_*\mathcal{O}_{\widetilde X}\right)(U)&=H^{n-1}\left(\pi^{-1}\big(\xi^{-1}(U)\big),\mathcal{O}_{\widetilde X}\right)\\
&=\bigoplus_{k=1}^mH^{n-1}\left(\pi^{-1}(\widehat U_k),\mathcal{O}_{\widetilde X}\right)\\
&=\bigoplus_{k=1}^m\left(R^{n-1}\pi_*\mathcal{O}_{\widetilde X}\right)(\widehat U_k).
\end{split}
\end{equation*}
Thus, it follows from $(3)$ of Remark \ref{rational} that $x\in X$ is a weakly rational singularity if and only if all of points $\hat x_1,...,\hat x_m$ are weakly rational singularities of $\widehat X$. In further, if $X$ has at most weakly rational singularities, by the definition, we can obtain
$$\omega_X=\omega_X^{\text{GR}}=(\xi\circ\pi)_*\omega_{\widetilde X}=\xi_*(\pi_*\omega_{\widetilde X})=\xi_*\omega_{\widehat X}.$$
\end{proof}

For our proof of Theorem \ref{SkodaDivision}, we need the following relative version of Grauert-Riemenschneider vanishing theorem for the higher direct images as well.

\begin{theorem} \emph{(\cite{Matsu_morphism}, Corollary 1.5).} \label{G-R_vanishing}
Let $\pi:X\to Y$ be a surjective proper (locally) K\"ahler morphism from a complex manifold $X$ to a complex space $Y$, and $(L,e^{-2\varphi_L})$ be a (possibly singular) Hermitian line bundle on $X$ with semi-positive curvature. Then, the higher direct image sheaf $$R^j\pi_*\Big(\omega_X\otimes\mathcal{O}_X(L)\otimes\mathscr{I}(\varphi_L)\Big)=0,$$
for every $j>\dim X-\dim Y$.
\end{theorem}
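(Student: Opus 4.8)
The plan is to deduce this relative vanishing from a local $L^2$-vanishing statement on the total space, using the classical two-step strategy of Takegoshi (generic vanishing plus torsion-freeness of the higher direct image), with the singular weight $\varphi_L$ handled by regularization. Note first that $X$ is a complex manifold, so $\mathscr{I}(\varphi_L)$ is the ordinary Nadel multiplier ideal and $\omega_X\otimes\mathcal{O}_X(L)\otimes\mathscr{I}(\varphi_L)=\mathcal{O}_X(K_X+L)\otimes\mathscr{I}(\varphi_L)=:\mathcal{F}$ is coherent; by Grauert's direct image theorem $R^j\pi_*\mathcal{F}$ is a coherent $\mathcal{O}_Y$-sheaf, so it is enough to kill its stalks. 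The statement being local on $Y$, fix $y\in Y$ and choose a Stein open neighborhood $Y'$ of $y$, relatively compact inside a local chart of $Y$ and contained in one member $U_\alpha$ of the Kähler cover of the morphism; put $X'=\pi^{-1}(Y')$. Since $Y'$ is Stein, Cartan's Theorem~B gives $H^0(Y',R^j\pi_*\mathcal{F})=H^j(X',\mathcal{F})$, so it suffices to prove $H^j(X',\mathcal{F})=0$ for $j>d:=\dim X-\dim Y$, for a neighborhood basis of such $Y'$. Pulling back a smooth strictly plurisubharmonic exhaustion $\chi$ of $Y'$ along the proper map $\pi$, the function $\chi\circ\pi$ is a plurisubharmonic exhaustion of $X'$, so $X'$ is a weakly pseudoconvex manifold; and it is K\"ahler since $X'\subset\pi^{-1}(U_\alpha)$.

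Next, the "generic vanishing" reduction. Let $Y_0\subset Y$ be the Zariski-open dense subset over which the fibre dimension equals $d$. By the formal-functions theorem together with the elementary bound that coherent cohomology of a $d$-dimensional complex space vanishes above degree $d$ (applied to the infinitesimal neighborhoods of the fibre), one gets $(R^j\pi_*\mathcal{F})_y=0$ for every $y\in Y_0$ and $j>d$. Hence $R^j\pi_*\mathcal{F}$ is supported on a proper analytic subset of $Y$ (a "torsion" sheaf). Consequently it suffices to prove that $R^j\pi_*\mathcal{F}$ is \emph{torsion-free}, i.e.\ admits no nonzero section supported on a proper analytic subset of $Y$: torsion plus torsion-free forces the sheaf to vanish.

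The heart of the proof is the torsion-freeness, via relative $L^2$-Hodge theory on $X'$. Using Demailly's regularization (legitimate because $\sqrt{-1}\partial\bar\partial\varphi_L\geq0$) write $\varphi_L$ as a decreasing limit of smooth weights $\varphi_\nu$ on $X'$ with $\sqrt{-1}\partial\bar\partial\varphi_\nu\geq-\varepsilon_\nu\,\omega'$, $\varepsilon_\nu\downarrow0$, for a fixed K\"ahler form $\omega'$; the strong openness property (Proposition~\ref{property}, which for smooth $X$ is the usual Nadel case) permits passing to the limit in the multiplier ideals. Complete $\omega'$ by adding $\sqrt{-1}\partial\bar\partial$ of a rapidly increasing convex function of $\chi\circ\pi$, and represent classes of $H^j(X',K_{X'}+L+\mathscr{I}(\varphi_L))$ by global $L^2$-harmonic $(n,j)$-forms $u$ with values in $L$, weight $e^{-2\varphi_L}$, via Demailly's $L^2$-isomorphism theorem. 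Decompose the metric into $\pi$-horizontal and $\pi$-vertical parts and apply the relative Bochner--Kodaira--Nakano identity à la Takegoshi: the curvature term $\langle\,\sqrt{-1}\partial\bar\partial\varphi_L\wedge\Lambda u,\,u\,\rangle\geq0$ combined with harmonicity forces the vertical components of $u$ to be fibrewise $\bar\partial$-harmonic, and since every fibre has dimension $\leq d<j$ these components vanish; iterating the identity while letting the convexity of the exhaustion term grow shows that any harmonic $u$ representing a torsion section must itself vanish. This gives torsion-freeness of $R^j\pi_*\mathcal{F}$, and together with the previous paragraph completes the proof.

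The main obstacle is the analytic core of the third step: performing $L^2$-Hodge theory on a merely weakly pseudoconvex (non-compact, non-Stein) manifold carrying a \emph{singular} weight, namely verifying that the multiplier-ideal-twisted direct image is exactly the one computed by the global weighted $L^2$-cohomology, controlling the regularization $\varphi_\nu\downarrow\varphi_L$ uniformly in the $L^2$ estimates, and pushing the fibration-adapted Bochner identity through the locus where $Y$ — and hence the fibration — is singular. A secondary delicate point is the passage from the pointwise conclusion ``vertical harmonic components vanish'' to the sheaf-theoretic statement ``$R^j\pi_*\mathcal{F}$ has no section supported on a proper subset,'' which interlocks the curvature estimate with the structure of the direct image. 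Alternatively, one can obtain the same vanishing from the corresponding relative injectivity theorem with multiplier ideal sheaves combined with a Leray/cone argument in the spirit of Koll\'ar and Fujino, but this merely relocates the same $L^2$-analytic difficulty.
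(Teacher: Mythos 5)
The paper does not actually prove this statement: Theorem \ref{G-R_vanishing} is quoted from Matsumura (\cite{Matsu_morphism}, Corollary 1.5) and used as a black box, so there is no in-paper proof to compare against. Your outline does reproduce the strategy of the cited source (and of Takegoshi's earlier work in the smooth-metric case): localize over a Stein base, observe that $R^j\pi_*\mathcal{F}$ vanishes over the open set where the fibre dimension equals $d=\dim X-\dim Y$, hence is supported on a proper analytic subset, and then kill it by proving torsion-freeness of the higher direct image via a relative Bochner--Kodaira--Nakano identity for $L^2$-harmonic representatives.

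As a proof, however, the proposal has a genuine gap, and you name it yourself: the entire content of the theorem sits in the torsion-freeness step with a \emph{singular} weight, and your text describes what that step should accomplish rather than carrying it out. Concretely: (a) after regularization you only have $\sqrt{-1}\partial\overline\partial\varphi_\nu\geq-\varepsilon_\nu\omega'$, so the curvature term in the Bochner identity is no longer nonnegative and one must control the error uniformly in $\nu$ while keeping $\mathscr{I}(\varphi_\nu)=\mathscr{I}(\varphi_L)$ --- this requires Demailly's equisingular approximation, not merely the strong openness property you invoke; (b) the identification of $H^j$ of the multiplier-ideal-twisted sheaf with a space of global $L^2$-harmonic forms on a weakly pseudoconvex manifold carrying a singular weight is itself a theorem that must be proved, not assumed; and (c) the passage from ``the vertical components of a harmonic representative vanish fibrewise'' to ``$R^j\pi_*\mathcal{F}$ admits no nonzero section supported on a proper analytic subset'' is precisely Takegoshi's argument and is not a formal consequence of the fibre-dimension count. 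Since the result is imported from the literature in any case, the honest options are either to cite \cite{Matsu_morphism} as the paper does, or to supply the full harmonic-theoretic argument; the present sketch does neither.
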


For the sake of convenience, we state the following characterization on the slope of plurisubharmonic functions (cf. Corollary 9.3 in \cite{BBEGZ}):

\begin{lemma} \label{slope}
Let $X$ be a locally irreducible complex space and $\varphi\in\emph{Psh}(X)$ with the slope $\nu_x(\varphi)=0$ for some point $x\in X$. Then, for any resolution of singularities $\pi:\widetilde X\to X$ of $X$, $\nu_{\tilde x}(\varphi\circ\pi)=0$ for each $\tilde x\in\pi^{-1}(x)$.
\end{lemma}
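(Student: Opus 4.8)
The plan is to move the question onto the smooth manifold $\widetilde X$, where $\nu_{\tilde x}(\varphi\circ\pi)$ is an ordinary Lelong number, and then to argue that a resolution of singularities cannot \emph{create} a positive Lelong number over $x$. Since the statement is local near $x$, I would first fix a local closed embedding $\iota\colon (X,x)\hookrightarrow(\mathbb{C}^N,0)$, so that $\nu_x(\varphi)=\liminf_{X\ni z\to x}\varphi(z)/\log\|\iota(z)\|$; this is independent of $\iota$ because two such embeddings differ by a biholomorphism of ambient domains, hence are locally bi-Lipschitz, and $\log$ of comparable positive functions differ by $O(1)$. Fix $\tilde x\in\pi^{-1}(x)$ and local coordinates $w$ on $\widetilde X$ centred at $\tilde x$. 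Since $\pi$ is holomorphic, $\varphi\circ\pi$ is plurisubharmonic on $\widetilde X$, so $\nu_{\tilde x}(\varphi\circ\pi)\ge 0$; the task is to prove $\nu_{\tilde x}(\varphi\circ\pi)\le 0$. Note that the crude estimate $\|\iota\circ\pi(w)\|\le C|w|$, which gives $\varphi\circ\pi(w)\le\sup_{\{\|\iota(\cdot)\|\le C|w|\}}\varphi$ and hence, by monotonicity of the sup and division by $\log|w|<0$, only the inequality $\nu_{\tilde x}(\varphi\circ\pi)\ge\nu_x(\varphi)=0$, merely reproduces the trivial direction; the nontrivial one will require a genuinely global input.

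I would isolate the easy case first. If $\varphi(x)>-\infty$, then $\varphi\circ\pi(\tilde x)=\varphi(x)$ is finite, and a plurisubharmonic function has vanishing Lelong number at every point where it is finite: indeed $r\mapsto\sup_{|w|\le r}\varphi\circ\pi$ is nondecreasing, bounded above, and bounded below by the finite number $\varphi(x)$, while $\nu_{\tilde x}(\varphi\circ\pi)=\lim_{r\to 0^+}(\log r)^{-1}\sup_{|w|\le r}\varphi\circ\pi$; so this case needs nothing beyond the definitions. Hence one may assume $\varphi(x)=-\infty$. Then $\varphi\circ\pi$ is identically $-\infty$ on $E:=\pi^{-1}(x)$, which is a compact connected analytic subset of $\widetilde X$ (connected since $(X,x)$ is irreducible; alternatively pass to the normalization first), because a plurisubharmonic function is constant on compact connected analytic sets.

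It remains to show $\nu_{\tilde x}(\varphi\circ\pi)=0$ when $\varphi(x)=-\infty$. Equivalently, by the Siu upper semicontinuity theorem the analytic sets $\{\,w:\nu_w(\varphi\circ\pi)\ge c\,\}$ ($c>0$) are to be shown disjoint from $E$. Over the regular part this is automatic: on $\widetilde X\setminus E=\pi^{-1}(X_{\mathrm{reg}})$ the map $\pi$ is a biholomorphism, so $\nu_w(\varphi\circ\pi)=\nu_{\pi(w)}(\varphi)$, and since $\{\,z\in X:\nu_z(\varphi)\ge c\,\}$ is analytic and avoids $x$ (as $\nu_x(\varphi)=0<c$), it misses a whole neighbourhood of $x$, so these Lelong number sets stay away from $E$ on the regular side. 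The entire difficulty is therefore concentrated over $X_{\mathrm{sing}}$, i.e. on the exceptional locus of $\pi$: one must rule out that passing to a resolution manufactures a positive Lelong number at a (possibly very special) point $\tilde x\in E$ out of a function whose slope at $x$ was zero. This is exactly the rigidity behind Corollary~9.3 of \cite{BBEGZ}: a plurisubharmonic function near the compact exceptional configuration of a resolution of a (locally irreducible) singularity cannot be more singular at a point of that configuration than its generic behaviour there allows, and that generic behaviour is pinned down by the slope downstairs. Granting this at $p=\tilde x$ yields $\nu_{\tilde x}(\varphi\circ\pi)=0$, completing the proof.

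The substantive step is the last one, and I expect it to be the main obstacle; it is genuinely nonformal. The hypothesis $\nu_x(\varphi)=0$ only bounds the spherical averages of $\varphi$ near $x$, whereas the conclusion demands pointwise control of $\varphi\circ\pi$ at a single point $\tilde x$ of the exceptional fibre; since a Lelong number cannot in general be recovered from the behaviour of a function along one branch or one irreducible component — it typically jumps up under such restrictions — the compactness (and the ``negativity'') of the exceptional locus of the resolution must be used in an essential way, which is why the argument appeals to \cite{BBEGZ} rather than to a direct local estimate. (In particular the component‑wise attempt, ``$\nu_x(\varphi)=0\Rightarrow$ zero generic Lelong number along each exceptional prime divisor $\Rightarrow$ zero everywhere on it'', is not valid for the middle implication in general, precisely because of the special points of the exceptional locus, so it cannot replace the global statement.)
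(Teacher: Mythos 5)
Your proposal is correct and ultimately coincides with the paper's treatment: the paper gives no proof of Lemma \ref{slope} at all, simply citing Corollary 9.3 of \cite{BBEGZ}, and your argument, after some valid but inessential reductions (the finite-value case, constancy of $\varphi\circ\pi$ on the connected compact fibre, the behaviour over $X_{\mathrm{reg}}$), defers the one substantive step to exactly that same citation. You correctly identify that this step is where all the content lies and that it is not formal.
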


Here, the slope of $\varphi$ at $x$ is defined by
$$\nu_x(\varphi):=\sup\left\{\gamma\geq0\ \big|\ \varphi\leq\gamma\log\sum_k|f_k|+O(1)\right\}\in[0,+\infty),$$
where $(f_k)$ are local generators of the maximal ideal $\frak{m}_x$ of $\mathcal{O}_{X,x}$.

\begin{remark} \label{Skoda}
If $X$ is a complex manifold, the slope $\nu_x(\varphi)$ is exactly the usual Lelong number of $\varphi$ at $x$ and the inequality $\nu_x(\varphi)<1$ implies the integrability of $e^{-2\varphi}$ near $x$ by a result of Skoda (see Lemma 5.6 in \cite{De10}).
\end{remark}

Next, we briefly review some relevant algebraic notions and theorems; one can refer to \cite{SH06}.

\begin{definition}
Let $R$ be a commutative ring and $I$ an ideal of $R$. An element $h\in R$ is said to be \emph{integrally dependent} on $I$ if it satisfies a relation
\[ h^d+a_1h^{d-1}+\cdots+a_d=0 \quad (a_k\in{I}^k, 1\le k\le d). \]

The set $\overline{I}$ consisting of all elements in $R$ which are integrally dependent on $I$ is called the \emph{integral closure} of $I$ in $R$. $I$ is called \emph{integrally closed} if $I=\overline{I}$. One can prove that $\overline{I}$ is an ideal of $R$, which is the smallest integrally closed ideal in $R$ containing $I$.
\end{definition}

\begin{definition}
Let $R$ be a commutative ring with identity and let $J\subset I$ be ideals in $R$. $J$ is said to be a \emph{reduction} of $I$ if there exists a nonnegative integer $n$ such that $I^{n+1}=JI^n$.

A reduction $J$ of $I$ is called \emph{minimal} if no ideal strictly contained in $J$ is a reduction of $I$. An ideal that has no reduction other than itself is called \emph{basic}.

One can prove that minimal reductions do exist in Noetherian local rings and an ideal which is a minimal reduction of a given ideal is necessarily basic. Moreover, if $R$ is a Noetherian ring, $J\subset I$ is a reduction of $I$ if and only if $\overline{J}=\overline{I}$.
\end{definition}

\begin{theorem} \emph{(\cite{SH06}, Proposition 8.3.7 \& Corollary 8.3.9).} \label{reduction}
Let $(R,\mathfrak{m})$ be a Noetherian local ring with infinite residue field and $I\subset R$ an ideal. Then every minimal reduction of $I$ can be generated by exactly $l$ (the analytic spread of $I$) elements with $l\leq \dim R$.
\end{theorem}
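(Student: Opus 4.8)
The plan is to pass to the \emph{fiber cone} (special fiber ring) $\cF(I):=\bigoplus_{n\ge0}I^n/\mathfrak{m}I^n$, a standard graded algebra over the infinite field $k:=R/\mathfrak{m}$, generated in degree one by $I/\mathfrak{m}I$, whose Krull dimension is by definition the analytic spread $l=l(I)$. The inequality $l\le\dim R$ is then immediate: $\cF(I)$ is a homomorphic image of $\mathrm{gr}_I(R)=\mathcal{R}(I)\otimes_R R$ (concretely $\cF(I)=\mathrm{gr}_I(R)/\mathfrak{m}\,\mathrm{gr}_I(R)$), so $l=\dim\cF(I)\le\dim\mathrm{gr}_I(R)=\dim R$ by the standard fact that the associated graded ring preserves dimension. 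All the substance lies in reading off (minimal) reductions from $\cF(I)$, and this is precisely where the residue field being infinite is used.

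First I would establish the dictionary: for $J\subseteq I$ denote by $A_J\subseteq\cF(I)$ the subalgebra generated by the image of $J$ in degree one. Then $J$ is a reduction of $I$ if and only if $\cF(I)$ is a finite $A_J$-module. For the forward direction, the relation $I^{n+1}=JI^n$ gives $\cF(I)_{m+1}=(\mathrm{im}\,J)\cdot\cF(I)_m$ for all $m\ge n$ by a one-line induction, so $\cF(I)$ is generated over $A_J$ by its finitely many low-degree pieces; conversely, finiteness forces $\cF(I)_{n+1}=(\mathrm{im}\,J)\cdot\cF(I)_n$ for $n\gg0$, i.e.\ $I^{n+1}=JI^n+\mathfrak{m}I^{n+1}$, and Nakayama applied to the finitely generated module $I^{n+1}$ yields $I^{n+1}=JI^n$. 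A first consequence: if $J$ is a reduction then $\dim A_J=\dim\cF(I)=l$, while $A_J$ is a standard graded $k$-algebra generated by at most $\mu(J):=\dim_k J/\mathfrak{m}J$ linear forms; hence $\mu(J)\ge l$ for \emph{every} reduction $J$, in particular for every minimal reduction.

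Next comes the construction of a reduction with exactly $l$ generators sitting inside any prescribed reduction, via graded Noether normalization over the infinite field $k$. Given a reduction $K$ of $I$, the algebra $A_K$ has dimension $l$ and is generated in degree one by the finite-dimensional image $W$ of $K$; choose $\xi_1,\dots,\xi_l\in W$, sufficiently general $k$-linear combinations of a spanning set of $W$, so that they are $k$-linearly independent and $\cF(I)$ is a finite module over $k[\xi_1,\dots,\xi_l]$ (possible since $\dim_k W\ge\dim A_K=l$). Lift $\xi_i$ to $a_i\in K$ and set $J:=(a_1,\dots,a_l)\subseteq K$. By the dictionary $J$ is a reduction of $I$, and $l\le\mu(J)\le l$ forces $\mu(J)=l$; moreover $A_J=k[\xi_1,\dots,\xi_l]\cong k[x_1,\dots,x_l]$ is a polynomial ring, and the natural surjection $\cF(J)\twoheadrightarrow A_J$ together with $\dim\cF(J)\le l$ (it is generated by $l$ elements in degree one) forces $\cF(J)\cong k[x_1,\dots,x_l]$ as well. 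Finally I would check minimality: if $J'\subseteq J$ is any reduction of $I$, then using the criterion ``reduction $\Leftrightarrow$ equal integral closures'' from the excerpt and $\overline{J}=\overline{I}$, $J'$ is a reduction of $J$, so $\cF(J)=k[x_1,\dots,x_l]$ is finite over the subalgebra generated by the image of $J'$ in degree one; a polynomial ring in $l$ variables can be finite over $k[V]$ for a subspace $V\subseteq k[x_1,\dots,x_l]_1$ only when $V$ is all of it, whence $J'+\mathfrak{m}J=J$ and $J'=J$ by Nakayama. Thus $J$ is a minimal reduction of $I$ with $\mu(J)=l$ contained in $K$; taking $K$ itself to be an arbitrary minimal reduction gives $K=J$, so every minimal reduction of $I$ is minimally generated by exactly $l$ elements.

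The one delicate point — and the only place the infiniteness of $k$ enters — is the ``sufficiently general linear combination'' step: one must pick $\xi_1,\dots,\xi_l$ \emph{simultaneously} so that (i) they are $k$-linearly independent and (ii) they form a homogeneous system of parameters for the relevant fiber cone (equivalently $\cF(I)$ is module-finite over $k[\xi_1,\dots,\xi_l]$), and then be sure the resulting ideal $J$ really needs $l$ generators rather than fewer. This is handled by the usual observation that each failure is a proper Zariski-closed condition on the affine space of $l\times\dim_k W$ coefficient matrices over $k$, so an infinite field leaves a point avoiding all of them; everything else is bookkeeping with the fiber cone and Nakayama's lemma.
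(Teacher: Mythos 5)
This statement is quoted in the paper from Swanson--Huneke without proof, and your fiber-cone argument is precisely the standard Northcott--Rees proof given in that reference: reductions of $I$ correspond to degree-one-generated subalgebras of $\mathcal{F}(I)=\bigoplus_{n\ge0}I^n/\mathfrak{m}I^n$ over which $\mathcal{F}(I)$ is module-finite, graded Noether normalization over the infinite residue field produces an $l$-generated reduction inside any prescribed one, and the minimality/Nakayama bookkeeping is carried out correctly, so the proof is sound. The only blemish is the harmless typo $\mathrm{gr}_I(R)=\mathcal{R}(I)\otimes_R R$, which should read $\mathcal{R}(I)\otimes_R R/I$; the identity $\mathcal{F}(I)=\mathrm{gr}_I(R)/\mathfrak{m}\,\mathrm{gr}_I(R)$ that you actually use is correct.
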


In the analytic setting, we have the following characterization of integral closure of ideals.

\begin{theorem} \label{LJ-T_IC}\emph{(cf. \cite{LJ-T}, Th\'eor\`eme 2.1).}
Let $X$ be a complex space, $Y$ a proper closed complex subspace (may be non-reduced) of $X$ defined by a coherent $\mathcal{O}_X$-ideal $\mathscr{I}$ and $x\in Y$ a point. Let $\mathscr{J}$ be a coherent $\mathcal{O}_X$-ideal in $\mathcal{O}_X$, $\mathcal{I}$ (resp. $\mathcal{J}$) the germ of $\mathscr{I}$ (resp. $\mathscr{J}$) at $x$. Then the following conditions are equivalent:
\begin{enumerate}
  \item $\mathcal{J}\subset\overline{\mathcal{I}}$;
  \item For every morphism $\pi:X'\to X$ satisfying: (a) $\pi$ is a proper and surjective, (b) $X'$ is a normal complex space, (c) $\mathscr{I}\cdot\mathcal{O}_{X'}$ is an invertible $\mathcal{O}_X$-module, there exists an open neighborhood $U$ of $x$ in $X$ such that
      $$\mathscr{J}\cdot\mathcal{O}_{X'}|_{\pi^{-1}(U)}\subset\mathscr{I}\cdot\mathcal{O}_{X'}|_{\pi^{-1}(U)};$$
  \item If $V$ is an open neighborhood of $x$ on which $\mathscr{I}$ and $\mathscr{J}$ are generated by their global sections. For every system of generators $g_1,...,g_r\in\Gamma(V,\mathscr{I})$ and every $f\in\Gamma(V, \mathscr{J})$, one can find an open neighborhood $V'$ of $x$ and a constant $C>0$ such that
      $$|f(y)|\le C\cdot \sup_{k}|g_k(y)|,\quad \forall y\in V'.$$
\end{enumerate}
\end{theorem}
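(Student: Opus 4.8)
The plan is to prove the cycle of implications $(1)\Rightarrow(3)\Rightarrow(2)\Rightarrow(1)$. At the outset I would single out one distinguished morphism: let $\pi_N\colon N\to X$ be the normalization of the blow-up of $X$ along the coherent ideal $\mathscr{I}$. The blow-up $\mathrm{Bl}_{\mathscr{I}}(X)\to X$ is a proper surjective morphism of complex spaces along which $\mathscr{I}$ pulls back to an invertible ideal sheaf, and composing with the finite surjective normalization yields such a $\pi_N$ with normal source $N$, hence a morphism exactly of the type appearing in $(2)$. Recall also that $\mathcal{O}_{X,x}$ is a Noetherian local ring, in fact excellent, so that $\overline{\mathcal{I}}$ is a genuine ideal; the morphism $\pi_N$ will serve as the bridge between the analytic/valuative conditions $(2),(3)$ and the algebraic condition $(1)$.

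For $(1)\Rightarrow(3)$ I would use the classical elementary estimate. Given an integral dependence relation $f^d+a_1f^{d-1}+\cdots+a_d=0$ with $a_k\in\mathcal{I}^k$, realize it on a neighbourhood $V$ on which $\mathscr{I}$ is generated by $g_1,\dots,g_r$; writing each $a_k$ as a holomorphic combination of the degree-$k$ monomials in the $g_j$ and shrinking $V$ so those coefficients are bounded gives $|a_k(y)|\le C_k\,M(y)^k$ with $M(y):=\sup_j|g_j(y)|$. Substituting into the relation, dividing by $|f(y)|^{d-1}$ when $M(y)>0$, and taking $C:=1+\sum_{k=1}^d C_k$ shows that $|f(y)|>C\,M(y)$ is impossible, while $M(y)=0$ forces $f(y)=0$; hence $|f|\le C\sup_j|g_j|$ on $V$, which is $(3)$.

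For $(3)\Rightarrow(2)$ I would pull the estimate back along an arbitrary morphism $\pi\colon X'\to X$ as in $(2)$. Applying $(3)$ to each member of a finite generating set of $\mathscr{J}$ near $x$, with $g_1,\dots,g_r$ generating $\mathscr{I}$, yields a common neighbourhood $V'$ and a constant $C>0$ with $|f|\le C\sup_j|g_j|$ on $V'$ for every such $f$. At $x'\in\pi^{-1}(x)$, invertibility of $\mathscr{I}\cdot\mathcal{O}_{X'}$ makes its stalk principal, generated by some $g_{j_0}\circ\pi$, with the remaining $g_j\circ\pi$ being $\mathcal{O}_{X',x'}$-multiples of it; hence near $x'$ one has $|f\circ\pi|\le C'|g_{j_0}\circ\pi|$, and since $g_{j_0}\circ\pi$ is a non-zero-divisor, $(f\circ\pi)/(g_{j_0}\circ\pi)$ is a locally bounded germ of meromorphic function on the normal space $X'$, hence holomorphic by Riemann's extension theorem, so $f\circ\pi\in(\mathscr{I}\cdot\mathcal{O}_{X'})_{x'}$. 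Thus $(\mathscr{J}\cdot\mathcal{O}_{X'})_{x'}\subset(\mathscr{I}\cdot\mathcal{O}_{X'})_{x'}$ for every $x'\in\pi^{-1}(x)$; as the locus where this inclusion holds is open (the complement of the support of a coherent sheaf), properness of $\pi$ supplies a neighbourhood $U\ni x$ with $\mathscr{J}\cdot\mathcal{O}_{X'}|_{\pi^{-1}(U)}\subset\mathscr{I}\cdot\mathcal{O}_{X'}|_{\pi^{-1}(U)}$, which is $(2)$.

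For $(2)\Rightarrow(1)$ I would feed the distinguished morphism $\pi_N\colon N\to X$ into $(2)$. Writing $\mathscr{I}\cdot\mathcal{O}_N=\mathcal{O}_N(-E)$ with $E=\sum_i a_iE_i$ an effective Cartier divisor, condition $(2)$ yields $\mathrm{ord}_{E_i}(f\circ\pi_N)\ge a_i=\mathrm{ord}_{E_i}(\mathcal{I})$ for every component $E_i$ meeting $\pi_N^{-1}(x)$ and every $f$ in a finite generating set of $\mathscr{J}_x$. Since these divisorial valuations are precisely the Rees valuations of $\mathcal{I}$, the valuative criterion for integral closure in excellent Noetherian rings gives $f\in\overline{\mathcal{I}}$, hence $\mathcal{J}\subset\overline{\mathcal{I}}$. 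I expect this last step to be the main obstacle: the whole argument rests on the fact that $\overline{\mathcal{I}}$ is detected by the finitely many divisorial valuations carried over $x$ by the normalized blow-up of $\mathscr{I}$ — equivalently, that the Rees valuations of $\mathcal{I}$ live on that modification — which is exactly where the excellence of analytic local rings and the theory of Rees valuations (see \cite{SH06}) are needed in an essential way. By contrast, the other two implications are soft, using only the elementary inequality above and Riemann extension across a normal complex space.
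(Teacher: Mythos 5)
The paper does not actually prove this statement: it is quoted as Th\'eor\`eme 2.1 of \cite{LJ-T} and used as a black box (its only traces downstream are Corollary \ref{IC_weakly} and Remark \ref{IC_weaklyModi}), so there is no in-paper argument to compare yours against. On its own terms, your cycle $(1)\Rightarrow(3)\Rightarrow(2)\Rightarrow(1)$ is the standard route, and the first two implications are correct and essentially complete: the elementary estimate extracted from an integral dependence relation, and the pull-back argument combining principality of an invertible ideal in a local ring (one of the $g_{j}\circ\pi$ must generate, since a unit combination of the $c_j$'s equals $1$), boundedness of the quotient, Riemann extension on the normal space $X'$, and properness of $\pi$ to spread the stalkwise inclusion over a saturated neighbourhood $\pi^{-1}(U)$.

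Two points in $(2)\Rightarrow(1)$ deserve flags. A minor one: for the normalized blow-up $\pi_N$ to be surjective, hence admissible in $(2)$, you need $\mathscr{I}$ not to vanish identically on any irreducible component of $X$ (i.e.\ $Y$ nowhere dense), else the blow-up discards components. The substantive one is the step you yourself single out: reducing $f\in\overline{\mathcal{I}}$ to the valuative criterion via Rees valuations is legitimate as algebra over the excellent local ring $\mathcal{O}_{X,x}$ (\cite{SH06}, Ch.\ 10), but it still requires identifying the divisorial valuations along the components of $E$ in the \emph{analytic} normalized blow-up over $x$ with the Rees valuations of $\mathcal{I}$ computed on $\operatorname{Spec}\mathcal{O}_{X,x}$ --- equivalently, proving $\pi_{N*}(\mathscr{I}\cdot\mathcal{O}_N)\cap\mathcal{O}_X=\overline{\mathscr{I}}$ in the analytic category. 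That comparison (normalization and blow-up commute with the passage from the analytic germ to the algebraic local scheme, which rests on excellence and faithful flatness) is precisely where \cite{LJ-T} do their real work, so as written your $(2)\Rightarrow(1)$ is a well-aimed reduction rather than a proof; with that comparison supplied, the argument closes.
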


Combining with the existence of universal denominator for the sheaf $\widehat{\mathcal{O}}_X$ of weakly holomorphic functions on $X$, we can deduce a characterization of the integral closures of ideals in $\widehat{\mathcal{O}}_X$ as follows.

\begin{corollary} \label{IC_weakly}
Let $X$ be a complex space with $x\in X$ a point, $\mathcal{I}=(h_1,...,h_r)\cdot\widehat{\mathcal{O}}_{X,x}$ an ideal in $\widehat{\mathcal{O}}_{X,x}$ and $\boldsymbol{\mathit{u}}$ a universal denominator for $\widehat{\mathcal{O}}_X$ near $x$.

Then, for any $f\in\widehat{\mathcal{O}}_{X,x}$, it follows that $f\in\overline{\mathcal{I}}$, the integral closure of $\mathcal{I}$ in $\widehat{\mathcal{O}}_{X,x}$, if and only if there is a neighborhood $W$ of $x$ in $X$ and a constant $C>0$ such that
$$|\boldsymbol{\mathit{u}}\cdot f(y)|\le C\cdot\sup_{k}|\boldsymbol{\mathit{u}}\cdot h_k(y)|,\quad \forall y\in W.$$
\end{corollary}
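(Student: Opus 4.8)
The plan is to pull the problem up to the normalization $\nu\colon\widetilde X\to X$ of a neighborhood of $x$, where the ring of germs is an honest analytic local ring and Theorem~\ref{LJ-T_IC} applies verbatim, and then to push the resulting boundedness estimate back down to $X$, using the universal denominator $\boldsymbol{\mathit{u}}$ to reconcile the holomorphic functions upstairs with the weakly holomorphic ones downstairs.

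Concretely, write $\nu^{-1}(x)=\{\tilde x_1,\dots,\tilde x_m\}$ (a finite set, since $\nu$ is finite) and use the canonical identification $\widehat{\mathcal{O}}_{X,x}\cong(\nu_*\mathcal{O}_{\widetilde X})_x\cong\bigoplus_{i=1}^m\mathcal{O}_{\widetilde X,\tilde x_i}$, which sends a weakly holomorphic germ $g$ to the tuple of germs of $\nu^*g=g\circ\nu$ at the $\tilde x_i$; under it $\mathcal{I}$ corresponds to $\bigoplus_i\widetilde{\mathcal{I}}_i$ with $\widetilde{\mathcal{I}}_i=(\nu^*h_1,\dots,\nu^*h_r)\mathcal{O}_{\widetilde X,\tilde x_i}$. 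Because an integral-dependence relation over a finite product of rings holds if and only if it holds in each factor (and, conversely, factorwise relations may be patched using the idempotents), the formation of integral closure commutes with this decomposition, so $f\in\overline{\mathcal{I}}$ in $\widehat{\mathcal{O}}_{X,x}$ iff $\nu^*f\in\overline{\widetilde{\mathcal{I}}_i}$ in $\mathcal{O}_{\widetilde X,\tilde x_i}$ for every $i$. Applying Theorem~\ref{LJ-T_IC} on the complex space $\widetilde X$ at each $\tilde x_i$, this is in turn equivalent to the existence of a neighborhood of $\tilde x_i$ and a constant $C_i>0$ with $|\nu^*f|\le C_i\sup_k|\nu^*h_k|$ on it; shrinking these to pairwise disjoint neighborhoods and using that $\nu$ is proper, they patch into a single neighborhood $W$ of $x$ and constant $C=\max_iC_i$ with $|\nu^*f|\le C\sup_k|\nu^*h_k|$ on $\nu^{-1}(W)$, and conversely any such global estimate restricts to the local ones. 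Thus $f\in\overline{\mathcal{I}}$ iff such $W,C$ exist.

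Finally I would convert the estimate on $\nu^{-1}(W)$ into the claimed estimate on $W$. By the defining property of the universal denominator, $\boldsymbol{\mathit{u}}f$ and $\boldsymbol{\mathit{u}}h_k$ lie in $\mathcal{O}_{X,x}$, hence are honest holomorphic functions near $x$, and $\nu^*(\boldsymbol{\mathit{u}}f)=(\boldsymbol{\mathit{u}}f)\circ\nu$, $\nu^*(\boldsymbol{\mathit{u}}h_k)=(\boldsymbol{\mathit{u}}h_k)\circ\nu$; therefore $|\boldsymbol{\mathit{u}}f(y)|=|\nu^*\boldsymbol{\mathit{u}}(\tilde y)|\,|\nu^*f(\tilde y)|$ and $|\boldsymbol{\mathit{u}}h_k(y)|=|\nu^*\boldsymbol{\mathit{u}}(\tilde y)|\,|\nu^*h_k(\tilde y)|$ for any $\tilde y\in\nu^{-1}(y)$. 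Multiplying the estimate on $\nu^{-1}(W)$ by the nonnegative factor $|\nu^*\boldsymbol{\mathit{u}}|$ yields $|\boldsymbol{\mathit{u}}f|\le C\sup_k|\boldsymbol{\mathit{u}}h_k|$ on $W$; conversely, since $\boldsymbol{\mathit{u}}$ is a non-zero-divisor its zero set pulls back to a nowhere-dense analytic subset of $\widetilde X$, so dividing the downstairs estimate by $|\nu^*\boldsymbol{\mathit{u}}|$ recovers $|\nu^*f|\le C\sup_k|\nu^*h_k|$ on a dense open subset of $\nu^{-1}(W)$, whence on all of it by continuity. Stringing the equivalences together yields the corollary. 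The steps requiring real care are this last two-sided transfer of the sup-estimate across $\nu$ (the density/continuity argument is needed precisely because of the auxiliary factor $\nu^*\boldsymbol{\mathit{u}}$) and, in the middle step, the minor point that Theorem~\ref{LJ-T_IC} must also be invoked at those $\tilde x_i$ where $\widetilde{\mathcal{I}}_i$ is the unit ideal, where however both sides of the equivalence hold trivially, so nothing is lost.
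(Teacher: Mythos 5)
Your argument is correct and is essentially the proof the paper only sketches (the corollary is stated without proof, with the indication that it follows from Theorem \ref{LJ-T_IC} together with the existence of a universal denominator): you realize exactly that plan by decomposing $\widehat{\mathcal{O}}_{X,x}\cong\bigoplus_i\mathcal{O}_{\widetilde X,\tilde x_i}$ over the normalization, applying Theorem \ref{LJ-T_IC} upstairs, and using $\boldsymbol{\mathit{u}}$ to transfer the sup-estimate, with the density/continuity step correctly handling the zero set of $\nu^*\boldsymbol{\mathit{u}}$. The only (harmless) variation from the most literal reading of the paper's hint is that you apply the Lejeune-Jalabert--Teissier criterion on $\widetilde X$ rather than to the coherent ideal $(\boldsymbol{\mathit{u}}h_1,\dots,\boldsymbol{\mathit{u}}h_r)\mathcal{O}_X$ downstairs, which would instead require the contraction property $\overline{J\widehat{\mathcal{O}}_{X,x}}\cap\mathcal{O}_{X,x}=\overline{J}$ for the integral extension $\mathcal{O}_{X,x}\subset\widehat{\mathcal{O}}_{X,x}$.
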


\begin{remark} \label{IC_weaklyModi}
Let $X$ be a complex space of pure dimension and $\mathscr{I}\subset\mathcal{O}_X$ a coherent ideal sheaf. Let $\pi:\widetilde X\to X$ be any proper modification from a normal complex space $\widetilde X$ onto $X$ such that $\mathscr{I}\cdot\mathcal{O}_{\widetilde X}=\mathcal{O}_{\widetilde X}(-D)$ for some effective Cartier divisor $D$ on $\widetilde X$. Then, we have $\pi_*\mathcal{O}_{\widetilde X}(-D)=\overline{\mathscr{I}}$, the integral closure of $\mathscr{I}$ in $\widehat{\mathcal{O}}_X$.
\end{remark}

\section{Grauert-Riemenschneider multiplier ideal sheaves on\\ complex spaces with singularities}

In this section, we firstly recall the concept of multiplier ideal sheaves associated to plurisubharmonic functions on complex spaces with singularities (\emph{not} necessarily normal or $\mathbb{Q}$-Gorenstein), which will play a crucial role in this paper. Motivated by the definition of Grauert-Riemenschneider canonical sheaf, we construct the multiplier ideals on complex spaces of pure dimension as follows.

\begin{definition} (\cite{Li_BS}, Definition 3.1). \label{MIS_GR}
Let $X$ be a complex space of pure dimension $n$ and $\varphi\in L_\text{loc}^1(X_\text{reg})$ with respect to the Lebesgue measure. The \emph{Grauert-Riemenschneider multiplier ideal sheaf} associated to the weight $\varphi$ on $X$ is defined to be the $\mathcal{O}_X$-submodule $\mathscr{I}_\text{GR}(\varphi)\subset\mathscr{M}_{X}$ of germs of meromorphic functions $f\in\mathscr{M}_{X,x}$ such that for any holomorphic section $\sigma$ of $\omega_X$ on a neighborhood of $x$, we have $$\Big(\sqrt{-1}\Big)^{n^2}|f|^2e^{-2\varphi}\sigma\wedge\overline\sigma$$ is locally integrable at $x$. In addition, we put $\mathscr{I}_\text{GR}(\varphi)=0$ if $\varphi\equiv-\infty$ on $X$.
\end{definition}

\begin{remark} (Local description, cf. \cite{Li_BS}).
Let $X$ be a (closed) complex subspace of some domain $\Omega$ in $\mathbb{C}^{N}$ and $Z\supset X$ is an $n$-dimensional (reduced) complete intersection (shrinking $\Omega$ if necessary). In particular, the dualizing sheaf $\omega_Z$ is free on $Z$ and so is $\omega_Z|_X$ on $X$. Moreover, it follows that the image of the inclusion $\omega_X\hookrightarrow\omega_Z|_X$ is given by
$$\text{Im}\left(\omega_X\hookrightarrow\omega_Z|_X\right)=\frak{d}_{X,Z}\otimes\omega_Z|_X;$$
in further, we can obtain $\frak{d}_{X,Z}=\big((\mathscr{I}_Z:\mathscr{I}_X)+\mathscr{I}_X\big)/\mathscr{I}_X$ (see \cite{deFD14} for more details).

Let $\pi:\widetilde X\to X$ be a resolution of singularities of the blow-up of $X$ along the ideal sheaf $\frak{d}_{X,Z}$ such that $\frak{d}_{X,Z}\cdot\mathcal{O}_{\widetilde X}=\mathcal{O}_{\widetilde X}(-D_Z)$ for some effective divisor $D_Z$ on $\widetilde X$. Then, for any $f\in\mathscr{I}_\text{GR}(\varphi)_x$ defined on a small enough neighborhood $U$ of $x$ in $X$ and each holomorphic section $\sigma$ of $\omega_X$ on a neighborhood of $x$, we have
$$\int_U\left(\sqrt{-1}\right)^{n^2}|f|^2e^{-2\varphi}\sigma\wedge\overline\sigma
=\int_{\pi^{-1}(U)}\left(\sqrt{-1}\right)^{n^2}|f\circ\pi|^2e^{-2\varphi\circ\pi}\pi^*\sigma\wedge\overline{\pi^*\sigma}<+\infty,$$
which implies that
$$\mathscr{I}_\text{GR}(\varphi)=\pi_*\left(\omega_{\widetilde X}\otimes\pi^*(\omega_Z^{-1}|_X)\otimes\mathcal{O}_{\widetilde X}(D_Z)\otimes\mathscr{I}(\varphi\circ\pi)\right). \eqno{(\spadesuit)}$$
\end{remark}

\begin{remark}
If $X$ is a $\mathbb{Q}$-Gorenstein complex space, by a similar argument on the discrepancies as Proposition 3.4 in \cite{deFD14} (see also \cite{EI15}, Proposition 2.21), it follows that
$$\mathscr{I}_\text{NO}(\varphi)\subset\mathscr{I}(\varphi)\subset\mathscr{I}_\text{GR}(\varphi),$$
where $\mathscr{I}_\text{NO}(\varphi)$ is the Nadel-Ohsawa multiplier ideal sheaf on $X$ introduced in \cite{Li_multiplier} and both of inclusions may be strict; see also \cite{Shibata17} for an algebraic counterpart.
\end{remark}

\begin{theorem} \emph{(=Theorem \ref{GR_weakly}).}
With the same hypotheses as above, then we obtain that $\mathscr{I}_\emph{GR}(\varphi)\subset\widehat{\mathcal{O}}_X$, and $\mathscr{I}_\emph{GR}(\varphi)$ is integrally closed in $\widehat{\mathcal{O}}_X$.
\end{theorem}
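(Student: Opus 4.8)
The plan is to prove the two assertions of Theorem~\ref{GR_weakly} separately, exploiting the local description $(\spadesuit)$ together with the functoriality of the Grauert-Riemenschneider canonical sheaf. Throughout, I work locally: fix $x\in X$, embed a neighborhood into a complete intersection $Z\supset X$ of pure dimension $n$ as in the local description, and let $\pi:\widetilde X\to X$ be a resolution factoring through the blow-up of $\frak{d}_{X,Z}$, so that $\frak{d}_{X,Z}\cdot\mathcal{O}_{\widetilde X}=\mathcal{O}_{\widetilde X}(-D_Z)$ and $\omega_X^{\mathrm{GR}}=\pi_*\omega_{\widetilde X}$.

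First, for the inclusion $\mathscr{I}_{\mathrm{GR}}(\varphi)\subset\widehat{\mathcal{O}}_X$: a germ $f\in\mathscr{M}_{X,x}$ lies in $\mathscr{I}_{\mathrm{GR}}(\varphi)$ iff $(\sqrt{-1})^{n^2}|f|^2e^{-2\varphi}\,\sigma\wedge\overline\sigma$ is locally integrable at $x$ for every holomorphic section $\sigma$ of $\omega_X$. I would first note that since $\varphi\in\mathrm{Psh}(X)$ is locally bounded above, the weight $e^{-2\varphi}$ is locally bounded below away from $0$, so the integrability forces $(\sqrt{-1})^{n^2}|f|^2\sigma\wedge\overline\sigma\in L^1_{\mathrm{loc}}$ for every section $\sigma$ of $\omega_X$; in particular $f\in\mathscr{I}_{\mathrm{GR}}(0)$. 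It therefore suffices to treat the case $\varphi\equiv0$ and show $\mathscr{I}_{\mathrm{GR}}(0)\subset\widehat{\mathcal{O}}_X$. Pulling back to $\widetilde X$ via $(\spadesuit)$ (or directly via the change-of-variables identity in the second Remark), $f\in\mathscr{I}_{\mathrm{GR}}(0)_x$ means $|f\circ\pi|^2\,\pi^*\sigma\wedge\overline{\pi^*\sigma}\in L^1_{\mathrm{loc}}$ for every section $\sigma$ of $\omega_X$. Choosing $\sigma$ to run over a generating set and using that $\pi^*\sigma$ generates, after twisting by $\mathcal{O}_{\widetilde X}(D_Z)$ and $\pi^*(\omega_Z^{-1}|_X)$, the invertible sheaf $\omega_{\widetilde X}$ (this is exactly the content of the image description $\mathrm{Im}(\omega_X\hookrightarrow\omega_Z|_X)=\frak{d}_{X,Z}\otimes\omega_Z|_X$ combined with $\omega_{\widetilde X}=\pi^*(\omega_Z|_X)\otimes\mathcal{O}_{\widetilde X}(-D_Z)\otimes(\text{discrepancy})$), the finiteness of $\int|f\circ\pi|^2\,\omega_{\widetilde X}\wedge\overline{\omega_{\widetilde X}}$ (suitably interpreted via a local frame) forces $f\circ\pi$ to be $L^2_{\mathrm{loc}}$ on $\widetilde X$ against a nowhere-vanishing volume form, hence locally bounded, hence—being meromorphic on $X$ and bounded—$f\in\widehat{\mathcal{O}}_{X,x}$ by the very definition of weakly holomorphic functions (germs of locally bounded meromorphic functions). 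The one subtle point I would be careful about is that local integrability of a holomorphic function against a smooth positive density does not by itself give boundedness; what saves us is that $f\circ\pi$ is \emph{holomorphic} on the manifold $\widetilde X$ and the density is smooth and positive, so the mean-value inequality upgrades $L^1_{\mathrm{loc}}$ to $L^\infty_{\mathrm{loc}}$. This is the step I expect to require the most care to state cleanly.

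For the second assertion, that $\mathscr{I}_{\mathrm{GR}}(\varphi)$ is integrally closed in $\widehat{\mathcal{O}}_X$, I would use the valuative/metric characterization of integral closure in $\widehat{\mathcal{O}}_X$ from Corollary~\ref{IC_weakly} (or, equivalently, Remark~\ref{IC_weaklyModi}). Suppose $f\in\widehat{\mathcal{O}}_{X,x}$ is integrally dependent on $\mathscr{I}_{\mathrm{GR}}(\varphi)_x$. Pick generators $h_1,\dots,h_r$ of $\mathscr{I}_{\mathrm{GR}}(\varphi)_x$ (it is a coherent fractional ideal sheaf by Proposition~\ref{property}, and contained in $\widehat{\mathcal{O}}_X$ by the part just proved, so these generators are weakly holomorphic), and let $\boldsymbol{\mathit{u}}$ be a universal denominator near $x$. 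By Corollary~\ref{IC_weakly} there is a neighborhood $W$ and $C>0$ with $|\boldsymbol{\mathit{u}}\cdot f(y)|\le C\sup_k|\boldsymbol{\mathit{u}}\cdot h_k(y)|$ on $W\cap X_{\mathrm{reg}}$; since $\boldsymbol{\mathit{u}}$ does not vanish identically on any component and is holomorphic, cancelling it gives $|f(y)|\le C\sup_k|h_k(y)|$ at least off the (measure-zero) zero set of $\boldsymbol{\mathit{u}}$, hence $|f|^2\le C^2\sum_k|h_k|^2$ almost everywhere on $X_{\mathrm{reg}}$. Then for any holomorphic section $\sigma$ of $\omega_X$,
\begin{equation*}
(\sqrt{-1})^{n^2}|f|^2e^{-2\varphi}\,\sigma\wedge\overline\sigma\ \le\ C^2\sum_k (\sqrt{-1})^{n^2}|h_k|^2e^{-2\varphi}\,\sigma\wedge\overline\sigma
\end{equation*}
as measures on $X_{\mathrm{reg}}$, and the right-hand side is locally integrable at $x$ because each $h_k\in\mathscr{I}_{\mathrm{GR}}(\varphi)_x$. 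Hence $f\in\mathscr{I}_{\mathrm{GR}}(\varphi)_x$, proving $\overline{\mathscr{I}_{\mathrm{GR}}(\varphi)}=\mathscr{I}_{\mathrm{GR}}(\varphi)$ in $\widehat{\mathcal{O}}_X$.

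The main obstacle, as flagged above, is the first part: making precise the passage from ``$|f\circ\pi|^2\,\pi^*\sigma\wedge\overline{\pi^*\sigma}\in L^1_{\mathrm{loc}}$ for all $\sigma$'' to ``$f\circ\pi$ is locally bounded on $\widetilde X$.'' The clean way is to choose, near each point of $\pi^{-1}(x)$, a local frame $e$ of the invertible sheaf $\omega_{\widetilde X}$ and to observe—via the image description of $\omega_X$ inside $\omega_Z|_X$ and the factorization of $\pi$ through the blow-up of $\frak{d}_{X,Z}$—that the pulled-back sections $\pi^*\sigma$ generate $\omega_{\widetilde X}\otimes\pi^*(\omega_Z|_X)^{-1}\otimes\mathcal{O}_{\widetilde X}(-D_Z)\cong\mathcal{O}_{\widetilde X}$ locally; equivalently, there is a section $\sigma_0$ with $\pi^*\sigma_0$ a local frame of $\omega_{\widetilde X}$ up to an invertible twist. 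Taking $\sigma=\sigma_0$ gives $|f\circ\pi|^2\in L^1_{\mathrm{loc}}(\widetilde X,\,\text{smooth positive density})$, and then the sub-mean-value property of $|f\circ\pi|^2$ (plurisubharmonic since $f\circ\pi$ is holomorphic) yields local boundedness. Everything else is bookkeeping with $(\spadesuit)$ and the definition of $\widehat{\mathcal{O}}_X$.
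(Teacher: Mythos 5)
Your second half (integral closedness) is fine and is exactly what the paper intends by its one-line appeal to Corollary \ref{IC_weakly}: integral dependence yields the pointwise bound $|f|\le C\sup_k|h_k|$ off a null set, which transfers the defining integrability from the generators to $f$. The first half, however, has a genuine gap, and it sits precisely at the step you flagged as delicate --- though the problem is not the mean-value inequality but the claim feeding into it. As $\sigma$ runs over generators of $\omega_{X,x}$, the pullbacks $\pi^*\sigma$ generate the invertible sheaf $\pi^*(\omega_Z|_X)\otimes\mathcal{O}_{\widetilde X}(-D_Z)$, \emph{not} $\omega_{\widetilde X}$: the two differ by the divisor $\hat K:=K_{\widetilde X}-\pi^*(K_Z|_X)+D_Z$ supported on the exceptional locus, and $\hat K$ can have a nonzero \emph{effective} part. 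For instance, for the three-fold ordinary double point $\{xy=zw\}$ one has $X=Z$, $D_Z=0$, and for $\pi$ the blow-up of the origin every $\pi^*\sigma$ vanishes to order $1$ along the exceptional divisor $E$. In that situation there is no $\sigma_0$ with $\pi^*\sigma_0$ a frame of $\omega_{\widetilde X}$ up to an invertible factor, and integrability of $|f\circ\pi|^2\,\pi^*\sigma_0\wedge\overline{\pi^*\sigma_0}$, which equals $|f\circ\pi|^2|s_E|^{2}$ times a positive density, is perfectly compatible with $f\circ\pi$ having a pole along $E$. This is exactly the case in which the theorem has content (it is the reason $\mathscr{I}_{\mathrm{GR}}$ is a priori only a fractional ideal), so your reduction collapses where it is most needed.

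The paper closes this gap with an input your argument never invokes: the inclusion $\omega_X^{\mathrm{GR}}\subset\omega_X$. Supposing $f\notin\widehat{\mathcal{O}}_{X,x}$, so that $f\circ\pi$ has a polar component $E\subset\mathrm{Ex}(\pi)$, one chooses among generators $\sigma_1,\dots,\sigma_r$ of $\omega_{X,x}$ one, say $\sigma_{k_0}$, minimizing $\mathrm{ord}_E\pi^*\sigma_k$; then $\mathrm{ord}_E\pi^*(f\sigma_{k_0})\le\mathrm{ord}_E\pi^*\sigma_{k_0}-1$, whereas the integrability hypothesis puts $f\sigma_{k_0}$ in $\omega_{X,x}^{\mathrm{GR}}\subset\omega_{X,x}$, hence expresses it as an $\mathcal{O}_{X,x}$-combination of the $\sigma_k$ and forces $\mathrm{ord}_E\pi^*(f\sigma_{k_0})\ge\mathrm{ord}_E\pi^*\sigma_{k_0}$ --- a contradiction. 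You would need this order-of-vanishing comparison (or an equivalent control of the effective part of $\hat K$) to repair the first half; the rest of your write-up can stand as is.
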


\begin{proof}
The integral closedness of $\mathscr{I}_\text{GR}(\varphi)$ in $\widehat{\mathcal{O}}_X$ is straightforward by Corollary \ref{IC_weakly}. Next, we only present the inclusion $\mathscr{I}_\text{GR}(\varphi)_x\subset\widehat{\mathcal{O}}_{X,x}$ for each point $x\in X$. Without loss of generality, we may assume $\varphi=0$. Let $\pi:\widetilde X\to X$ be a resolution of singularities of $X$ with exceptional locus $\text{Ex}(\pi)$ and $f\in\mathscr{I}_\text{GR}(\varphi)_x\subset\mathscr{M}_{X,x}$.\\

Suppose that $f\notin\widehat{\mathcal{O}}_{X,x}$, then $f\circ\pi$ is a meromorphic function with nontrivial poles contained in $\text{Ex}(\pi)$ on $\widetilde X$ after shrinking $X$ and denote by $E$ an irreducible component of the poles of $f\circ\pi$. Let $\sigma_1,...,\sigma_r$ be a system of generators of $\omega_{X,x}$, and then the local integrability of $|f|^2\sigma_k\wedge\overline{\sigma_k}\ (1\leq k\leq r)$ near $x$ implies that $\pi^*\sigma_k$ is a meromorphic $n$-form  on $\widetilde X$, which vanishes on $E$. Take $\pi^*\sigma_{k_0}$ to be the one which has the least vanishing order along $E$ among all $\pi^*\sigma_k$. Thus, the vanishing order $\text{ord}_E\pi^*(f\cdot\sigma_{k_0)}$ of $\pi^*(f\cdot\sigma_{k_0})$ along $E$ is at most $\text{ord}_E\pi^*\sigma_{k_0}-1$.

On the other hand, note that $f\cdot\sigma_{k_0}\in\omega_{X,x}^{\text{GR}}\subset\omega_{X,x}$, i.e., $f\cdot\sigma_{k_0}$ can be generated by $\sigma_1,...,\sigma_r$ near $x$, and then it implies that $$\text{ord}_E\pi^*(f\cdot\sigma_{k_0)}\geq\text{ord}_E\pi^*\sigma_{k_0},$$
which is a contradiction; the proof is concluded.
\end{proof}

\begin{remark}
For the case that $\text{codim}\,X_\text{sing}\geq2$, we can directly obtain the above result by the Riemann extension theorem on locally pure dimensional complex spaces (cf. \cite{GR84}).
\end{remark}

As a consequence, we can deduce the following characterization of weakly rational singularities by the Grauert-Riemenschneider multiplier ideals.

\begin{proposition}
Let $X$ be a locally irreducible complex space of dimension $n$ and $\varphi\in\emph{Psh}(X)$ with the slope $\nu_x(\varphi)=0$ for every point $x\in X$. Then, $X$ has at most weakly rational singularities if and only if $\mathscr{I}_\emph{GR}(\varphi)=\widehat{\mathcal{O}}_X$.
\end{proposition}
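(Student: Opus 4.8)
The plan is to reduce both implications to statements on a fixed resolution of singularities $\pi\colon\widetilde X\to X$, using three ingredients already at hand: the identity $\omega_X^{\mathrm{GR}}=\pi_*\omega_{\widetilde X}$ together with $\omega_X^{\mathrm{GR}}\subset\omega_X$ (Definition \ref{canonical}), the defining property of a weakly rational singularity as the equality $\omega_X=\omega_X^{\mathrm{GR}}$ near the point, and the inclusion $\mathscr I_{\mathrm{GR}}(\varphi)\subset\widehat{\mathcal O}_X$ from Theorem \ref{GR_weakly}. Observe first that the hypothesis $\nu_x(\varphi)=0$ in particular forces $\nu_x(\varphi)<+\infty$, hence (by local irreducibility) $\varphi\not\equiv-\infty$ on every irreducible component of $X$, so that Proposition \ref{property} and Theorem \ref{GR_weakly} apply and $\mathscr I_{\mathrm{GR}}(\varphi)$ is a coherent fractional ideal sheaf contained in $\widehat{\mathcal O}_X$.

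For the implication ``$X$ has at most weakly rational singularities $\Rightarrow\mathscr I_{\mathrm{GR}}(\varphi)=\widehat{\mathcal O}_X$'', since the inclusion $\mathscr I_{\mathrm{GR}}(\varphi)\subset\widehat{\mathcal O}_X$ is automatic, I would only have to verify the reverse inclusion stalkwise. Fix $x\in X$ and $f\in\widehat{\mathcal O}_{X,x}$; weak holomorphicity makes $f\circ\pi$ holomorphic and locally bounded on $\pi^{-1}(U)$ for a small neighbourhood $U$ of $x$. Under the hypothesis $\omega_X=\omega_X^{\mathrm{GR}}=\pi_*\omega_{\widetilde X}$, every holomorphic germ $\sigma\in\omega_{X,x}$ is represented by a holomorphic $n$-form $\widetilde\sigma$ on $\pi^{-1}(U)$ coinciding with $\pi^*\sigma$ away from $\mathrm{Ex}(\pi)$, so the change of variables across the modification $\pi$ (as in the local description of $\mathscr I_{\mathrm{GR}}(\varphi)$) gives
\[
\int_{U\cap X_{\mathrm{reg}}}(\sqrt{-1})^{n^2}|f|^2e^{-2\varphi}\,\sigma\wedge\overline\sigma
=\int_{\pi^{-1}(U)}(\sqrt{-1})^{n^2}|f\circ\pi|^2e^{-2\varphi\circ\pi}\,\widetilde\sigma\wedge\overline{\widetilde\sigma}.
\]
As $|f\circ\pi|$ is bounded and $(\sqrt{-1})^{n^2}\widetilde\sigma\wedge\overline{\widetilde\sigma}$ is a bounded smooth $(n,n)$-form on the manifold $\widetilde X$, the right-hand side is dominated by $C\int_{\pi^{-1}(U)}e^{-2\varphi\circ\pi}\,dV$ for $U$ small. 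This is where the hypotheses enter: Lemma \ref{slope} promotes $\nu_x(\varphi)=0$ to $\nu_{\tilde x}(\varphi\circ\pi)=0$ for every $\tilde x\in\pi^{-1}(x)$, whence Remark \ref{Skoda} yields local integrability of $e^{-2\varphi\circ\pi}$ near each point of the compact fibre $\pi^{-1}(x)$; covering the fibre by finitely many such neighbourhoods and shrinking $U$ makes the integral finite. Hence $f\in\mathscr I_{\mathrm{GR}}(\varphi)_x$ for every $x$, and $\mathscr I_{\mathrm{GR}}(\varphi)=\widehat{\mathcal O}_X$.

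For the converse I would argue by contraposition. If some $x\in X$ fails to be a weakly rational singularity, then $\omega_{X,x}^{\mathrm{GR}}\subsetneq\omega_{X,x}$, so I may pick a holomorphic germ $\sigma\in\omega_{X,x}\setminus\omega_{X,x}^{\mathrm{GR}}$; by the very definition of $\omega_X^{\mathrm{GR}}$ this means $\int_{U\cap X_{\mathrm{reg}}}(\sqrt{-1})^{n^2}\sigma\wedge\overline\sigma=+\infty$ for every neighbourhood $U$ of $x$. Since $\varphi$ is plurisubharmonic, it is locally bounded above, say $\varphi\le M$ near $x$, so $e^{-2\varphi}\ge e^{-2M}$ there and therefore $\int_{U\cap X_{\mathrm{reg}}}(\sqrt{-1})^{n^2}e^{-2\varphi}\,\sigma\wedge\overline\sigma=+\infty$ as well. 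By Definition \ref{MIS_GR} this says precisely that $1\notin\mathscr I_{\mathrm{GR}}(\varphi)_x$, hence $\mathscr I_{\mathrm{GR}}(\varphi)\ne\widehat{\mathcal O}_X$, which is the desired contrapositive.

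The step I expect to carry all the weight is the integrability of $e^{-2\varphi\circ\pi}$ on $\widetilde X$ in the first implication: that is exactly where local irreducibility of $X$ (needed to invoke Lemma \ref{slope}) and the vanishing-slope hypothesis are used, via Skoda's classical integrability criterion. The remaining points are routine — the pullback of a weakly holomorphic function to a resolution is holomorphic, the change-of-variables formula holds across a proper modification, and the converse direction is essentially immediate once one notes that a plurisubharmonic weight can only improve integrability.
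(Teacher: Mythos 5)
Your proposal is correct and follows essentially the same route as the paper: the forward implication is obtained by pulling back to a resolution, where weak rationality makes $\pi^*\sigma$ holomorphic and Lemma \ref{slope} combined with Remark \ref{Skoda} gives local integrability of $e^{-2\varphi\circ\pi}$, while the converse uses only that $1\in\mathcal{O}_X\subset\widehat{\mathcal{O}}_X$ and that a plurisubharmonic $\varphi$ is locally bounded above. The paper states this in two lines; your write-up just supplies the details.
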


\begin{proof}
Let $\pi:\widetilde X\to X$ be a resolution of singularities. If $X$ has at most weakly rational singularities, then for any holomorphic section $\sigma$ of $\omega_X$, it follows that $\pi^*\sigma$ is a holomorphic $n$-form on $\widetilde X$. Thus, the necessity is a direct application of Lemma \ref{slope} and Remark \ref{Skoda}. The sufficiency is straightforward since $\mathcal{O}_X\subset\widehat{\mathcal{O}}_X$ and $\varphi$ is upper semi-continuous on $X$.
\end{proof}

\begin{remark}
$(1)$ If $\varphi$ is locally bounded, the statement is also valid for any complex space of pure dimension.

$(2)$ In particular, if $X$ is a normal complex space, then $X$ has at most weakly rational singularities if and only if $\mathscr{I}_\text{GR}(\varphi)=\mathcal{O}_X$, which implies that on normal complex spaces with weakly rational singularities, the plurisubharmonic weights with zero slope do not add the singularities in the sense of Grauert-Riemenschneider multiplier ideals.
\end{remark}

Let $X|_U\hookrightarrow\Omega\subset\mathbb{C}^{N}$ be any local embedding of $X$ for some open set $U\subset X$ and $Z\supset X|_U$ an $n$-dimensional (reduced) complete intersection. Note that $$\text{Im}\left(\omega_X\hookrightarrow\omega_Z|_X\right)=\frak{d}_{X,Z}\otimes\omega_Z|_X,$$
and then we can deduce that $\omega_Z|_X\otimes\mathscr{I}_\text{GR}(\varphi+\log|\frak{d}_{X,Z}|)$ globally defines a coherent analytic sheaf on $X$ for any $\varphi\in\text{Psh}(X)$. Thus, relying on the multiplier ideals defined above, we will derive a form of Nadel vanishing theorem on complex spaces with singularities as follows.

\begin{proposition} \label{Nadel_vanishing}
Let $(X,\omega)$ be a weakly pseudoconvex K\"ahler space of pure dimension $n$. Let $(L,e^{-2\varphi_L})$ be a holomorphic line bundle on $X$ equipped with a singular Hermitian metric of plurisubharmonic weight $\varphi_L$ and $\sqrt{-1}\partial\overline\partial\varphi_L\geq\varepsilon\omega$ for some positive continuous function $\varepsilon$ on $X_\text{\emph{reg}}$. Then,
$$H^q\big(X,\omega_Z|_X\otimes\mathscr{I}_\emph{GR}(\varphi_L+\log|\frak{d}_{X,Z}|)\otimes\mathcal{O}_X(L)\big)=0$$
for all $q\geq1$, where $X$ is locally contained in an $n$-dimensional complete intersection $Z$ and $\omega_X=\frak{d}_{X,Z}\otimes\omega_Z|_X$.
\end{proposition}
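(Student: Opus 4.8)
The plan is to reduce the statement directly to the Nadel-type vanishing Theorem \ref{NaDe_vanishing} by means of a canonical isomorphism of coherent $\mathcal{O}_X$-sheaves
$$\omega_Z|_X\otimes\mathscr{I}_\text{GR}\big(\varphi_L+\log|\mathfrak{d}_{X,Z}|\big)\;\cong\;\omega_X^{\text{GR}}(\varphi_L).$$
This identification is local in nature and does not depend on the auxiliary complete intersection $Z$, on the chosen generators of $\mathfrak{d}_{X,Z}$, or on the resolution used below; this is exactly what makes the left-hand side, a priori built from the local datum $Z$, a globally defined coherent sheaf on $X$ (as asserted in the paragraph preceding the statement), and it reduces the proposition to the known vanishing $H^q(X,\omega_X^{\text{GR}}(\varphi_L)\otimes\mathcal{O}_X(L))=0$ for $q\geq1$.

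The first step is to establish this isomorphism. I would fix a local embedding $X|_U\hookrightarrow\Omega\subset\mathbb{C}^N$ with $Z\supset X|_U$ an $n$-dimensional complete intersection, and take a resolution of singularities $\pi:\widetilde X\to X$ dominating the blow-up of $\mathfrak{d}_{X,Z}$, so that $\mathfrak{d}_{X,Z}\cdot\mathcal{O}_{\widetilde X}=\mathcal{O}_{\widetilde X}(-D_Z)$ for an effective divisor $D_Z$. Tensoring the local description $(\spadesuit)$ of $\mathscr{I}_\text{GR}$, applied to the weight $\varphi_L+\log|\mathfrak{d}_{X,Z}|$, by the line bundle $\omega_Z|_X$ and using the projection formula (legitimate since $\omega_Z$ is free on $Z$) yields
$$\omega_Z|_X\otimes\mathscr{I}_\text{GR}\big(\varphi_L+\log|\mathfrak{d}_{X,Z}|\big)=\pi_*\Big(\omega_{\widetilde X}\otimes\mathcal{O}_{\widetilde X}(D_Z)\otimes\mathscr{I}\big((\varphi_L+\log|\mathfrak{d}_{X,Z}|)\circ\pi\big)\Big).$$
Now $(\log|\mathfrak{d}_{X,Z}|)\circ\pi=\log|\mathfrak{d}_{X,Z}\cdot\mathcal{O}_{\widetilde X}|$ agrees, up to a locally bounded term, with $\log|s_{D_Z}|$ for a local equation $s_{D_Z}$ of $D_Z$; hence $\mathscr{I}((\varphi_L+\log|\mathfrak{d}_{X,Z}|)\circ\pi)=s_{D_Z}\cdot\mathscr{I}(\varphi_L\circ\pi)$ locally, the twist by $\mathcal{O}_{\widetilde X}(D_Z)$ exactly absorbs the factor $s_{D_Z}$, and the right-hand side reduces to $\pi_*\big(\omega_{\widetilde X}\otimes\mathscr{I}(\varphi_L\circ\pi)\big)$. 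Since $\widetilde X$ is smooth, $\omega_{\widetilde X}\otimes\mathscr{I}(\varphi_L\circ\pi)=\omega_{\widetilde X}^{\text{GR}}(\varphi_L\circ\pi)$, and the functoriality property $\pi_*\omega_{\widetilde X}^{\text{GR}}(\varphi\circ\pi)=\omega_X^{\text{GR}}(\varphi)$ recalled in the remark following Definition \ref{canonical} (applicable because $\varphi_L\in\text{Psh}(X)$) gives $\pi_*\omega_{\widetilde X}^{\text{GR}}(\varphi_L\circ\pi)=\omega_X^{\text{GR}}(\varphi_L)$. Gluing these local isomorphisms over a cover of $X$ produces the asserted global one.

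With the identification available, the proof concludes immediately: applying Theorem \ref{NaDe_vanishing} to the weakly pseudoconvex K\"ahler space $(X,\omega)$ and the Hermitian line bundle $(L,e^{-2\varphi_L})$ with $\sqrt{-1}\partial\overline\partial\varphi_L\geq\varepsilon\omega$ gives $H^q(X,\omega_X^{\text{GR}}(\varphi_L)\otimes\mathcal{O}_X(L))=0$ for all $q\geq1$, which is precisely the claimed vanishing. The main obstacle is therefore entirely the sheaf identification — checking carefully that the twist by $\mathcal{O}_{\widetilde X}(D_Z)$ precisely compensates the correction term $\log|\mathfrak{d}_{X,Z}|$ in the weight, and that the resulting local isomorphisms are canonical enough to glue independently of $Z$ and $\pi$. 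One must also keep in mind that $\varphi_L+\log|\mathfrak{d}_{X,Z}|$ is in general not plurisubharmonic (nor even upper semicontinuous), so coherence of $\mathscr{I}_\text{GR}(\varphi_L+\log|\mathfrak{d}_{X,Z}|)$ is not covered by Proposition \ref{property} but comes out of its description as a direct image of a coherent sheaf on $\widetilde X$; once this bookkeeping is settled, nothing further is needed.
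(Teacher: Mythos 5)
Your proposal is correct and follows essentially the same route as the paper: both reduce the statement to the identification $\omega_Z|_X\otimes\mathscr{I}_\text{GR}(\varphi_L+\log|\frak{d}_{X,Z}|)=\omega_X^{\text{GR}}(\varphi_L)$ and then invoke Theorem \ref{NaDe_vanishing}, with the identification established via $(\spadesuit)$, the projection formula, and the observation that the twist by $\mathcal{O}_{\widetilde X}(D_Z)$ exactly cancels the $\log|\frak{d}_{X,Z}|$ correction in the weight. The paper runs the chain of equalities starting from $\omega_X^{\text{GR}}(\varphi_L)=\pi_*(\omega_{\widetilde X}\otimes\mathscr{I}(\varphi_L\circ\pi))$ rather than from the left-hand side, but this is the same computation in the opposite direction.
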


\begin{proof}
Thanks to Theorem \ref{NaDe_vanishing}, it is sufficient to prove
$$\omega_X^{\text{GR}}(\varphi_L)=\omega_Z|_X\otimes\mathscr{I}_\text{GR}(\varphi_L+\log|\frak{d}_{X,Z}|).$$

Let $\pi:\widetilde X\to X$ be a resolution of singularities of the blow-up of $X$ along the ideal sheaf $\frak{d}_{X,Z}$, and $\frak{d}_{X,Z}\cdot\mathcal{O}_{\widetilde X}=\mathcal{O}_{\widetilde X}(-D_Z)$ for some effective divisor $D_Z$ (not necessarily SNC) on $\widetilde X$. Then, it follows from the projection formula and $(\spadesuit)$ that
\begin{equation*}
\begin{split}
\omega_X^{\text{GR}}(\varphi_L)&=\pi_*\left(\omega_{\widetilde X}\otimes\mathscr{I}(\varphi_L\circ\pi)\right)\\
&=\pi_*\left(\pi^*(\omega_Z|_X)\otimes\mathcal{O}_{\widetilde X}(-D_Z)\otimes\omega_{\widetilde X}\otimes\pi^*(\omega_Z^{-1}|_X)\otimes\mathcal{O}_{\widetilde X}(D_Z)\otimes\mathscr{I}(\varphi_L\circ\pi)\right)\\
&=\pi_*\left(\pi^*(\omega_Z|_X)\otimes\omega_{\widetilde X}\otimes\pi^*(\omega_Z^{-1}|_X)\otimes\mathcal{O}_{\widetilde X}(D_Z)\otimes\mathscr{I}(\varphi_L\circ\pi+\log|\frak{d}_{X,Z}|\circ\pi)\right)\\
&=\omega_Z|_X\otimes\pi_*\left(\omega_{\widetilde X}\otimes\pi^*(\omega_Z^{-1}|_X)\otimes\mathcal{O}_{\widetilde X}(D_Z)\otimes\mathscr{I}(\varphi_L\circ\pi+\log|\frak{d}_{X,Z}|\circ\pi)\right)\\
&=\omega_Z|_X\otimes\mathscr{I}_\text{GR}(\varphi_L+\log|\frak{d}_{X,Z}|).
\end{split}
\end{equation*}
\end{proof}

Note when $X$ is 1-Gorenstein, one can infer that $$\omega_Z|_X\otimes\mathscr{I}_\text{GR}(\varphi_L+\log|\frak{d}_{X,Z}|)=\omega_X\otimes\mathscr{I}_\text{GR}(\varphi_L).$$
Thus, the above result immediately implies that

\begin{corollary} \label{Nadel_vanishing1G}
With the same hypotheses as above, we in addition assume that $X$ is a 1-Gorenstein complex space. Then, we have
$$H^q\big(X,\omega_X\otimes\mathcal{O}_X(L)\otimes\mathscr{I}_\emph{GR}(\varphi_L)\big)=0$$
for all $q\geq1$.
\end{corollary}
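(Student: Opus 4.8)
The plan is to deduce Corollary \ref{Nadel_vanishing1G} directly from Proposition \ref{Nadel_vanishing} by showing that, under the extra hypothesis that $X$ is $1$-Gorenstein, the sheaf appearing in the conclusion of Proposition \ref{Nadel_vanishing} is canonically isomorphic to $\omega_X\otimes\mathcal{O}_X(L)\otimes\mathscr{I}_\text{GR}(\varphi_L)$. So the whole content is the identity
$$\omega_Z|_X\otimes\mathscr{I}_\text{GR}(\varphi_L+\log|\frak{d}_{X,Z}|)=\omega_X\otimes\mathscr{I}_\text{GR}(\varphi_L),$$
already asserted in the remark between the two statements; I would simply supply its justification and then invoke Proposition \ref{Nadel_vanishing}.

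First I would recall that, for a local embedding $X|_U\hookrightarrow\Omega\subset\mathbb{C}^N$ into an $n$-dimensional reduced complete intersection $Z\supset X|_U$, one has $\text{Im}(\omega_X\hookrightarrow\omega_Z|_X)=\frak{d}_{X,Z}\otimes\omega_Z|_X$, so that $\omega_X\cong\frak{d}_{X,Z}\otimes\omega_Z|_X$ as $\mathcal{O}_X$-modules. When $X$ is $1$-Gorenstein, $\omega_X$ is invertible, hence $\frak{d}_{X,Z}$ is an invertible fractional ideal sheaf (a line subbundle of $\omega_Z|_X$, which is itself free), so $\log|\frak{d}_{X,Z}|$ differs locally from the logarithm of a single nonvanishing section. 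Next I would observe, straight from Definition \ref{MIS_GR} (or from the formula $(\spadesuit)$), that tensoring the weight by an invertible ideal sheaf corresponds to twisting the multiplier ideal: for an invertible $\frak{d}$ one has $\mathscr{I}_\text{GR}(\varphi+\log|\frak{d}|)=\frak{d}^{-1}\cdot\mathscr{I}_\text{GR}(\varphi)$, because multiplying $f$ by a local generator of $\frak{d}$ exactly absorbs the factor $e^{-2\log|\frak{d}|}$ in the integrand. Applying this with $\frak{d}=\frak{d}_{X,Z}$ and $\varphi=\varphi_L$ gives
$$\omega_Z|_X\otimes\mathscr{I}_\text{GR}(\varphi_L+\log|\frak{d}_{X,Z}|)=\omega_Z|_X\otimes\frak{d}_{X,Z}^{-1}\cdot\mathscr{I}_\text{GR}(\varphi_L)=\big(\frak{d}_{X,Z}\otimes\omega_Z|_X\big)\otimes\frak{d}_{X,Z}^{-2}\cdot\mathscr{I}_\text{GR}(\varphi_L),$$
which is not quite what is wanted, so I would instead be careful: the correct bookkeeping is that $\omega_X=\frak{d}_{X,Z}\otimes\omega_Z|_X$ and that the GR-multiplier ideal is intrinsically defined via sections $\sigma$ of $\omega_X$, so $\mathscr{I}_\text{GR}(\varphi_L)$ is already the intrinsic object; the formula $(\spadesuit)$ then forces $\omega_Z|_X\otimes\mathscr{I}_\text{GR}(\varphi_L+\log|\frak{d}_{X,Z}|)=\omega_X\otimes\mathscr{I}_\text{GR}(\varphi_L)$ as submodules of $\mathscr{M}_X\otimes\omega_Z|_X$, simply by matching the two expressions through the resolution $\pi$. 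I would phrase this as a one-line consequence of the chain of equalities already displayed in the proof of Proposition \ref{Nadel_vanishing}, stopping at the penultimate line and re-collecting $\pi^*(\omega_Z|_X)\otimes\mathcal{O}_{\widetilde X}(-D_Z)=\pi^*\omega_X$ on the nose when $\omega_X$ is invertible.

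The only genuine point requiring care — and what I expect to be the main (minor) obstacle — is checking that when $X$ is $1$-Gorenstein the line bundle identity $\pi^*\omega_X=\pi^*(\omega_Z|_X)\otimes\mathcal{O}_{\widetilde X}(-D_Z)$ holds as subsheaves of $\omega_{\widetilde X}$ compatibly with the inclusion $\frak{d}_{X,Z}\cdot\mathcal{O}_{\widetilde X}=\mathcal{O}_{\widetilde X}(-D_Z)$, rather than merely up to an abstract isomorphism; this is where $1$-Gorensteinness (invertibility of $\omega_X$, equivalently of $\frak{d}_{X,Z}$ after pullback) is used, so that no correction divisor survives. Once this is in place, the corollary is immediate: substituting the identity into the conclusion of Proposition \ref{Nadel_vanishing} yields $H^q\big(X,\omega_X\otimes\mathcal{O}_X(L)\otimes\mathscr{I}_\text{GR}(\varphi_L)\big)=0$ for all $q\geq1$, which is the claim. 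I would keep the write-up to two or three lines, citing Proposition \ref{Nadel_vanishing} and the preceding remark.
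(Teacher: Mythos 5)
Your proof takes exactly the paper's route: the corollary is deduced from Proposition \ref{Nadel_vanishing} together with the identity $\omega_Z|_X\otimes\mathscr{I}_\text{GR}(\varphi_L+\log|\frak{d}_{X,Z}|)=\omega_X\otimes\mathscr{I}_\text{GR}(\varphi_L)$ in the $1$-Gorenstein case, which the paper simply asserts and you justify via $(\spadesuit)$ and the projection formula. One small correction: the twisting formula should read $\mathscr{I}_\text{GR}(\varphi+\log|\frak{d}|)=\frak{d}\cdot\mathscr{I}_\text{GR}(\varphi)$ rather than $\frak{d}^{-1}\cdot\mathscr{I}_\text{GR}(\varphi)$, since the weight $\log|\frak{d}|$ contributes a factor $|d|^{-2}$ to the integrand and hence requires $f/d\in\mathscr{I}_\text{GR}(\varphi)$; with the correct sign your first computation gives $\omega_Z|_X\otimes\frak{d}_{X,Z}\otimes\mathscr{I}_\text{GR}(\varphi_L)=\omega_X\otimes\mathscr{I}_\text{GR}(\varphi_L)$ directly, so the detour through the resolution is unnecessary (though your fallback argument matching the two sides via $(\spadesuit)$ is also valid).
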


As a result, by combining with Richberg's result \cite{Richberg68}, we will deduce a solution to the Levi problem on $1$-Gorenstein complex spaces with weakly rational singularities.

\begin{corollary}
Let $X$ be a $1$-Gorenstein complex space of dimension $n$ with weakly rational singularities. Then, the following statements are equivalent:

$(1)$ There is a continuous strictly plurisubharmonic exhaustion function on $X$.

$(2)$ $X$ is a Stein space.
\end{corollary}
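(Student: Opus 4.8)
The statement is an equivalence, and essentially all the content lies in $(1)\Rightarrow(2)$, which is where Corollary~\ref{Nadel_vanishing1G} and Richberg's theorem are used; the converse $(2)\Rightarrow(1)$ uses nothing about the singularities of $X$. For $(2)\Rightarrow(1)$ I would simply invoke Narasimhan's embedding theorem: a finite-dimensional Stein space admits a proper holomorphic embedding $j:X\hookrightarrow\mathbb{C}^N$, and then $\psi:=\|j(\cdot)\|^2$ is a $C^\infty$, hence continuous, strictly plurisubharmonic exhaustion function on $X$.

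For $(1)\Rightarrow(2)$, start from a continuous strictly plurisubharmonic exhaustion function $\psi_0$ on $X$. By Richberg's approximation theorem~\cite{Richberg68} one may approximate $\psi_0$ uniformly on compact subsets by a $C^\infty$ strictly plurisubharmonic function, and a sufficiently close approximation is again an exhaustion; so we obtain a $C^\infty$ strictly plurisubharmonic exhaustion $\psi$ on $X$, whose Levi form $\omega:=\sqrt{-1}\partial\overline\partial\psi$ is a Kähler form on $X_\text{reg}$, so that $(X,\omega)$ is a weakly pseudoconvex Kähler space. Now fix any holomorphic line bundle $L$ on $X$. Since $X$ is $1$-Gorenstein, $\omega_X$ is an invertible sheaf, so we may write $L=\omega_X\otimes L'$ with $L':=\omega_X^{-1}\otimes L$; endow $L'$ with a $C^\infty$ Hermitian metric of local weight $\psi_{L'}$ and replace it by $\varphi_{L'}:=\psi_{L'}+\chi\circ\psi$, where $\chi$ is a smooth convex increasing function chosen to grow fast enough that $\sqrt{-1}\partial\overline\partial\varphi_{L'}\geq\omega$ on $X_\text{reg}$ (possible because $\chi''\cdot\sqrt{-1}\,\partial\psi\wedge\overline{\partial\psi}\geq0$, while on each compact sublevel set $\{\psi\leq c\}$ a large value of $\chi'(c)$ absorbs the fixed form $-\sqrt{-1}\partial\overline\partial\psi_{L'}$). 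Then $\varphi_{L'}$ is continuous and plurisubharmonic on $X$ (it is plurisubharmonic on $X_\text{reg}$ and continuous, hence plurisubharmonic on $X$ by the pullback criterion recalled after Theorem~\ref{SkodaDivision}), with vanishing slope at every point. Because $X$ is weakly rational, $\omega_X=\omega_X^{\text{GR}}$, and since $\omega_X$ is invertible one checks directly from Definition~\ref{MIS_GR} that $\mathscr{I}_\text{GR}(\varphi_{L'})=\mathcal{O}_X$ for such a locally bounded weight (a meromorphic $f$ lies in $\mathscr{I}_\text{GR}(\varphi_{L'})$ iff $f\sigma\in\omega_X^{\text{GR}}=\omega_X$ for a local generator $\sigma$ of $\omega_X$, i.e.\ iff $f\in\mathcal{O}_X$). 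Hence Corollary~\ref{Nadel_vanishing1G} yields
$$H^q\big(X,\mathcal{O}_X(L)\big)=H^q\big(X,\omega_X\otimes\mathcal{O}_X(L')\otimes\mathscr{I}_\text{GR}(\varphi_{L'})\big)=0\qquad\text{for all }q\geq1.$$

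It remains to pass from this vanishing (in particular $H^q(X,\mathcal{O}_X)=0$ for $q\geq1$) to the Steinness of $X$. I would argue along the classical lines of the Levi problem on complex spaces: the sublevel sets $X_c=\{\psi<c\}$ are relatively compact and strongly pseudoconvex, hence holomorphically convex by Grauert's theorem, and one upgrades the exhaustion $X=\bigcup_c X_c$ to holomorphic convexity and holomorphic spreadability of $X$ itself by the standard bumping and Runge-approximation arguments (Narasimhan, Docquier--Grauert), the required cohomological input being exactly the vanishing just obtained. I expect this last passage to be the main obstacle, since Corollary~\ref{Nadel_vanishing1G} only provides vanishing for sheaves twisted by the invertible sheaf $\omega_X$, so one must either reduce every relevant coherent sheaf to that form or lean on the classical complex-space machinery to close the gap; by contrast the preliminary points---Richberg's smoothing remaining an exhaustion, the construction of $\chi$, and the identification $\mathscr{I}_\text{GR}(\varphi_{L'})=\mathcal{O}_X$---are routine.
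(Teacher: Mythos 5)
Your $(2)\Rightarrow(1)$ and the first half of $(1)\Rightarrow(2)$ (Richberg smoothing, the K\"ahler structure $\omega=\sqrt{-1}\partial\overline\partial\psi$, and the identification $\mathscr{I}_{\mathrm{GR}}(\varphi)=\mathcal{O}_X$ for bounded weights using $1$-Gorenstein plus weak rationality) match the paper. But there is a genuine gap exactly where you flag it: you only extract the vanishing $H^q(X,\mathcal{O}_X(L))=0$ for \emph{invertible} sheaves, and no amount of bumping or Docquier--Grauert--Runge machinery closes the distance from that to Steinness --- those classical arguments themselves need cohomological control of coherent \emph{ideal} sheaves (to separate points and produce local coordinates), which is precisely what you do not have. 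As written, the last step of your argument does not go through.

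The paper closes this gap by applying Corollary \ref{Nadel_vanishing1G} with a \emph{singular} weight rather than a smooth one. Fix an arbitrary discrete sequence $(x_k)\subset X$ and a quasi-plurisubharmonic $u$ with $u\sim c_k\log(|f_{k1}|+\cdots+|f_{kl_k}|)$ near each $x_k$, where the $f_{kj}$ generate $\mathfrak{m}_{x_k}$ and $c_k\geq n$, so that $\operatorname{Supp}(\mathcal{O}_X/\mathscr{I}_{\mathrm{GR}}(u))=\{x_k\}$; weak rationality is what guarantees $\mathscr{I}_{\mathrm{GR}}(u)=\mathcal{O}_X$ away from the poles, and the $1$-Gorenstein hypothesis guarantees $\mathscr{I}_{\mathrm{GR}}(u)\subset\mathcal{O}_X$ so that the quotient makes sense as a sheaf of $\mathcal{O}_X$-modules. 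Taking $L$ with $\mathcal{O}_X(L^{-1})=\omega_X$ and metric weight $u+\chi\circ\psi$ (your $\chi$ construction reappears here to force $\sqrt{-1}\Theta_{L,h_L}\geq\varepsilon\omega$), the long exact sequence of
$$0\rightarrow\mathscr{I}_{\mathrm{GR}}(h_L)\rightarrow\mathcal{O}_X\rightarrow\mathcal{O}_X/\mathscr{I}_{\mathrm{GR}}(h_L)\rightarrow0$$
twisted by $\omega_X\otimes\mathcal{O}_X(L)=\mathcal{O}_X$, together with the vanishing of $H^1$ of the kernel, gives the surjection $H^0(X,\mathcal{O}_X)\twoheadrightarrow H^0\big(X,\mathcal{O}_X/\mathscr{I}_{\mathrm{GR}}(u)\big)$. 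Since $(x_k)$ is an arbitrary discrete sequence and the quotient at each $x_k$ dominates $\mathcal{O}_{X,x_k}/\mathfrak{m}_{x_k}$, this produces global holomorphic functions with prescribed values (indeed jets) on $(x_k)$, which yields holomorphic separability and holomorphic convexity directly --- no appeal to the classical Levi-problem bumping arguments is needed. If you replace your final paragraph with this prescription-of-jets argument, your proof becomes complete and coincides with the paper's.
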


\begin{proof}
"$(2)\Rightarrow(1)$". Since $X$ is a Stein space, it carries a real analytic strictly plurisubharmonic exhaustion function (cf. \cite{Nara_Levi}).\\

"$(1)\Rightarrow(2)$". By Richberg's result, there exists a smooth strictly plurisubharmonic exhaustion function $\psi$ on $X$, which implies $X$ is a K\"ahler space. Moreover, since $X$ is $1$-Gorenstein, we have $\mathscr{I}_\text{GR}(\varphi)\subset\mathcal{O}_X$ for any $\varphi\in\text{Psh}(X)$.

Let $(x_k)\subset X$ be any discrete sequence of points and $u$ a quasi-plurisubharmonic function on $X$ such that $$u\sim c_k\log(|f_{k1}|+\cdots+|f_{kl_k}|)$$ near each $x_k$, where $(f_{k1},...,f_{kl_k})$ are generators of the maximal ideal $\mathfrak{m}_{x_k}$ and $c_k\gg0$ such that $$\text{Supp}(\mathcal{O}_X/\mathscr{I}_\text{GR}(u))=\{x_k\}$$ (in fact, $c_k\geq n$ is enough); here, the weak rationality of singularities of $X$ is necessary. Take a holomorphic line bundle $L$ on $X$ with $\mathcal{O}_X(L^{-1})=\omega_X$ and a singular Hermitian metric on $L$ of the form $h_L=h_0e^{-2(u+\chi\circ\psi)}$, where $h_0$ is a smooth metric on $L$ and $\chi$ is a convex increasing function of arbitrary fast growth at infinity such that $$\sqrt{-1}\Theta_{L,h_L}=\sqrt{-1}\left(\Theta_{L,h_0}+2\partial\overline\partial(u+\chi\circ\psi)\right)\geq\varepsilon\omega$$
for some positive continuous function $\varepsilon$ on $X_\text{reg}$. Then, it follows that $\mathscr{I}_\text{GR}(h_L)=\mathscr{I}_\text{GR}(u)$.

Consider the long exact sequence of cohomology associated to the short exact sequence $$0\rightarrow\mathscr{I}_\text{GR}(h_L)\rightarrow\mathcal{O}_X\rightarrow\mathcal{O}_X/\mathscr{I}_\text{GR}(h_L)\rightarrow0$$
twisted by $\omega_X\otimes\mathcal{O}_X(L)$. Applying Corollary \ref{Nadel_vanishing1G}, we obtain the surjectivity
$$H^0\big(X,\mathcal{O}_X\big)\twoheadrightarrow H^0\big(X,\mathcal{O}_X/\mathscr{I}_\text{GR}(u)\big)$$
by our choice of $L$, which implies that $X$ is holomorphically separable and convex, i.e., $X$ is a Stein space.
\end{proof}

\section{Proofs of main results}

\subsection{Proof of Theorem \ref{SkodaDivision}}

Firstly, we state a result on the existence of reductions for coherent (fractional) ideal sheaves on complex spaces.

\begin{lemma} \label{Exis_reduction}
Let $X$ be a complex space of dimension $n$ and $\mathfrak{a}\subset\widehat{\mathcal{O}}_X$ a non-zero ideal. Then
%locally $\mathfrak{a}$ has a reduction generated by $n$ elements. More precisely,
there exists an open covering $\{U_\alpha\}_{\alpha\in\mathbb{N}}$ of $X$ such that $\mathfrak{a}|_{U_\alpha}$ has a reduction $\mathfrak{b}_\alpha$ generated by at most $n$ elements.
\end{lemma}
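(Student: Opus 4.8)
The plan is to reduce the statement to a stalk‑level question, settle it with the classical theory of reductions recalled in Theorem~\ref{reduction}, then spread the construction to an open set by a coherence argument, and finally extract a countable subcovering from paracompactness of $X$.

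First I would fix $x\in X$ and analyse the stalk $\widehat{\mathcal O}_{X,x}$. Being module‑finite over the Noetherian ring $\mathcal O_{X,x}$ it is Noetherian, and since it is the integral closure of $\mathcal O_{X,x}$ in its total quotient ring it decomposes as a finite product $\widehat{\mathcal O}_{X,x}=\prod_{i=1}^{s}R_i$ of normal local domains $R_i=\mathcal O_{X_i^{\nu},x_i}$, one for each irreducible component $X_i$ of the germ $(X,x)$ (each irreducible component has a single preimage point in the normalization, because the integral closure of a domain in its fraction field is again a domain). Every residue field occurring is $\mathbb C$, in particular infinite, and $\dim R_i=\dim X_i\le n$. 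Writing $\mathfrak a_x=\prod_i\mathfrak a_i$ accordingly, Theorem~\ref{reduction} gives for each $i$ a reduction $\mathfrak b_i\subset\mathfrak a_i$ generated by $l_i\le\dim R_i\le n$ elements (take $\mathfrak b_i=0$ when $\mathfrak a_i=0$), and hence a single integer $m$ with $\mathfrak a_i^{m+1}=\mathfrak b_i\mathfrak a_i^{m}$ for all $i$ simultaneously. The bookkeeping point is then to merge these $s$ reductions into one ideal with at most $n$ generators rather than $\sum_i l_i$: with $N:=\max_i l_i\le n$ and $\mathfrak b_i=(b_{i,1},\dots,b_{i,N})R_i$ (padding with zeros), the elements $c_j:=(b_{1,j},\dots,b_{s,j})\in\widehat{\mathcal O}_{X,x}$, $1\le j\le N$, satisfy $(c_1,\dots,c_N)\widehat{\mathcal O}_{X,x}=\prod_i\mathfrak b_i$, which one checks (products of ideals in a finite product ring factor through the components) is a reduction of $\mathfrak a_x$.

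Next I would spread this out. Since $\widehat{\mathcal O}_X=\nu_*\mathcal O_{X^{\nu}}$ is a coherent sheaf of rings on $X$ (pushforward of the coherent $\mathcal O_{X^{\nu}}$ under the finite normalization map $\nu$), the ideal sheaf $\mathfrak a$, its powers, and products such as $\mathfrak b\,\mathfrak a^{m}$ with $\mathfrak b$ coherent are again coherent. Representing $c_1,\dots,c_N$ by sections of $\mathfrak a$ over a neighborhood $U$ of $x$ and setting $\mathfrak b:=(c_1,\dots,c_N)\widehat{\mathcal O}_U\subset\mathfrak a|_U$, one has $\mathfrak b\,\mathfrak a^{m}\subset\mathfrak a^{m+1}$ automatically, so $\mathfrak a^{m+1}/\mathfrak b\,\mathfrak a^{m}$ is a coherent sheaf with vanishing stalk at $x$; its support, being an analytic subset, therefore misses a smaller neighborhood of $x$, on which $\mathfrak a^{m+1}=\mathfrak b\,\mathfrak a^{m}$ and $\mathfrak b$ is the desired reduction with at most $n$ generators. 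Finally, the open sets thus produced cover $X$, and since the complex spaces of this paper are paracompact and hence (being locally compact and locally connected) Lindel\"of on each connected component, one passes to a countable subcovering $\{U_\alpha\}_{\alpha\in\mathbb N}$, on each of which $\mathfrak a|_{U_\alpha}$ has a reduction $\mathfrak b_\alpha$ generated by at most $n$ elements.

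The only steps that are not completely formal are (i) the semi‑local nature of $\widehat{\mathcal O}_{X,x}$, which forces the passage through the product decomposition and the merging of reductions — and here it is precisely the fact that every residue field is the infinite field $\mathbb C$ that makes Theorem~\ref{reduction} applicable to each factor; and (ii) transporting the algebraic identity $\mathfrak a_x^{m+1}=\mathfrak b_x\mathfrak a_x^{m}$ from the stalk to a full neighborhood, which rests on coherence of $\widehat{\mathcal O}_X$ as a sheaf of rings. I expect (i), namely the combinatorial packaging that keeps the generator count at $n$ rather than $\sum_i\dim R_i$, to be the part most easily mishandled, although it is not deep.
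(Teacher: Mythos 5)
Your proof is correct, but it takes a genuinely different route from the one the paper sketches. The paper's argument is geometric, modelled on Example 9.6.19 of \cite{La04}: one passes to the normalized blow-up $\mu:W\to X$ of $\mathfrak{a}$ (through the normalization of $X$), where $\mathfrak{a}\cdot\mathcal{O}_W=\mathcal{O}_W(-F)$ becomes invertible and is generated over a neighborhood of each fiber by the pullbacks of local generators; an analytic Bertini-type theorem then shows that $n$ general $\mathbb{C}$-linear combinations of these generators already generate $\mathcal{O}_W(-F)$ near $\mu^{-1}(x)$ (the fibers having dimension at most $n-1$), and Corollary \ref{IC_weakly} together with Remark \ref{IC_weaklyModi} converts the equality $\mathfrak{b}\cdot\mathcal{O}_W=\mathfrak{a}\cdot\mathcal{O}_W$ into $\overline{\mathfrak{b}}=\overline{\mathfrak{a}}$ on a neighborhood, i.e.\ into the reduction property. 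You instead argue purely local-algebraically: the decomposition $\widehat{\mathcal{O}}_{X,x}\cong\prod_i R_i$ into normal analytic local domains of dimension at most $n$, the bound $l\le\dim R_i$ for minimal reductions from Theorem \ref{reduction} (applicable because every residue field is $\mathbb{C}$), the componentwise merging that correctly keeps the generator count at $\max_i l_i\le n$ rather than $\sum_i l_i$, and a coherence argument (Oka's theorem that $\widehat{\mathcal{O}}_X=\nu_*\mathcal{O}_{X^{\nu}}$ is $\mathcal{O}_X$-coherent) to propagate the stalk identity $\mathfrak{a}_x^{m+1}=\mathfrak{b}_x\mathfrak{a}_x^{m}$ to a neighborhood. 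Both routes are sound; the paper's produces an explicit reduction (general linear combinations of any given generators, valid uniformly near a whole fiber) at the price of a Bertini statement on singular spaces, while yours avoids Bertini and the blow-up entirely but requires the semilocal bookkeeping and the spreading-out step, and yields generators that are a priori only germs at $x$. Two cosmetic caveats: the coherence step tacitly uses that $\mathfrak{a}$ is locally finitely generated (automatic in the paper's applications, where $\mathfrak{a}$ has $r$ local generators), and the extraction of a covering indexed by $\mathbb{N}$ needs $X$ to have at most countably many connected components, which is implicit in the statement itself.
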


Thanks to Corollary \ref{IC_weakly}, the proof is similar to the argument of Example 9.6.19 in \cite{La04} by considering the normalized blow-up of the normalization of $X$, where an analytic Bertini-type theorem is necessary.\\

\noindent{\textbf{Proof of Theorem \ref{SkodaDivision}.}} By the definition of Grauert-Riemenschneider multiplier ideals, we can deduce the inclusion
$$\mathfrak{a}\cdot\mathscr{I}_\text{GR}\big(\varphi+(k-1)\varphi_\mathfrak{a}\big)\subset\mathscr{I}_\text{GR}\big(\varphi+k\varphi_\mathfrak{a}\big).$$
Thus, in order to prove Theorem \ref{SkodaDivision}, it is sufficient to show the reverse inclusion.

As the question is local, without loss of generality, we may assume that $X$ is a (closed) complex subspace of some domain $\Omega$ in $\mathbb{C}^{N}$ and $Z\supset X$ is an $n$-dimensional (reduced) complete intersection.\\

\textbf{Case (i).} When $r\leq n$, i.e., $q=r$.

Let $g=(g_1,\dots,g_r)$ be a system of generators of $\mathfrak{a}$. Let $\pi:\widetilde X\to\widehat X\stackrel{\xi}{\to}X$ be a log resolution of $\frak{d}_{X,Z}$ and $\frak{a}$ such that $\frak{d}_{X,Z}\cdot\mathcal{O}_{\widetilde X}=\mathcal{O}_{\widetilde X}(-D_Z)$ and $\frak{a}\cdot\mathcal{O}_{\widetilde X}=\mathcal{O}_{\widetilde X}(-F)$ for some effective divisors $D_Z$ and $F$ on $\widetilde X$, where $\xi:\widehat X\to X$ is the normalization of $X$. Denote by
\begin{equation*}
\begin{split}
\mathscr{A}_m:=&\ \omega_{\widetilde X}\otimes\pi^*(\omega_Z^{-1}|_X)\otimes\mathcal{O}_{\widetilde X}(D_Z)\otimes\mathscr{I}(\varphi\circ\pi+m\varphi_\mathfrak{a}\circ\pi)\\
=&\ \omega_{\widetilde X}\otimes\pi^*(\omega_Z^{-1}|_X)\otimes\mathcal{O}_{\widetilde X}(D_Z)\otimes\mathscr{I}(\varphi\circ\pi)\otimes\mathcal{O}_{\widetilde X}(-mF)
\end{split}
\end{equation*}
for any $m\in\mathbb{N}$, and consider the Koszul complex determined by $g_1,\dots,g_r$:
$$0\to\Lambda^rV\otimes\mathcal{O}_{\widetilde X}(rF)\to\cdots\to\Lambda^2V\otimes\mathcal{O}_{\widetilde X}(2F)\to V\otimes\mathcal{O}_{\widetilde X}(F)\to\mathcal{O}_{\widetilde X}\to0,$$
where $V$ is the vector space spanned by $g_1,\dots,g_r$. Note that the Koszul complex is locally split and its syzygies are locally free, so twisting through by any coherent sheaf will preserve the exactness. Then, by twisting with $\mathscr{A}_k\ (k\geq r=q)$, we obtain the following long exact sequence
$$0\to\Lambda^rV\otimes\mathscr{A}_{k-r}\to\cdots\to\Lambda^2V\otimes\mathscr{A}_{k-2}\to V\otimes\mathscr{A}_{k-1}\to\mathscr{A}_k\to0.\eqno{(\star)}$$

On the other hand, for any $m\in\mathbb{N}$, we can derive the vanishing of higher direct images $R^j\pi_*\mathscr{A}_m=0\ (\forall j>0)$ by Theorem \ref{G-R_vanishing}. Note that
$$\mathscr{I}_\text{GR}\big(\varphi+m\varphi_\mathfrak{a}\big)=\pi_*\mathscr{A}_m$$
by $(\spadesuit)$, and then by taking direct
images of $(\star)$ we will deduce the following so-called exact Skoda complex (cf. \cite{La04}, p. 228):
$$0\to\Lambda^rV\otimes\mathscr{I}_\text{GR}\big(\varphi+(k-r)\varphi_\mathfrak{a}\big)\to\cdots\to V\otimes\mathscr{I}_\text{GR}\big(\varphi+(k-1)\varphi_\mathfrak{a}\big)\to\mathscr{I}_\text{GR}\big(\varphi+k\varphi_\mathfrak{a}\big)\to0.$$
In particular, the map $$V\otimes\mathscr{I}_\text{GR}\big(\varphi+(k-1)\varphi_\mathfrak{a}\big)\to\mathscr{I}_\text{GR}\big(\varphi+k\varphi_\mathfrak{a}\big)$$
is surjective, by which we can infer that $$\mathscr{I}_\text{GR}\big(\varphi+k\varphi_\mathfrak{a}\big)\subset\frak{a}\cdot\mathscr{I}_\text{GR}\big(\varphi+(k-1)\varphi_\mathfrak{a}\big).$$

\textbf{Case (ii).} When $r>n$, i.e., $q=n$.

By Lemma \ref{Exis_reduction}, we may assume that $\mathfrak{b}$ is a reduction of $\mathfrak{a}$ generated by $n$ elements $\widetilde g_1,...,\widetilde g_n$. Consider a log resolution $\pi:\widetilde X\to\widehat X\stackrel{\xi}{\to}X$ of $\frak{d}_{X,Z},\ \frak{a}$ and $\frak{b}$ such that $\frak{d}_{X,Z}\cdot\mathcal{O}_{\widetilde X}=\mathcal{O}_{\widetilde X}(-D_Z)$ and $\frak{a}\cdot\mathcal{O}_{\widetilde X}=\frak{b}\cdot\mathcal{O}_{\widetilde X}=\mathcal{O}_{\widetilde X}(-F)$ for some effective divisors $D_Z$ and $F$ on $\widetilde X$. Then, by the same argument as above, we can deduce the following exact Skoda complex:
$$0\to\Lambda^nV\otimes\mathscr{I}_\text{GR}\big(\varphi+(k-n)\varphi_\mathfrak{a}\big)\to\cdots\to V\otimes\mathscr{I}_\text{GR}\big(\varphi+(k-1)\varphi_\mathfrak{a}\big)\to\mathscr{I}_\text{GR}\big(\varphi+k\varphi_\mathfrak{a}\big)\to0.$$
for any $k\geq n=q$, where $V$ is the vector space spanned by $\widetilde g_1,...,\widetilde g_n$. Therefore, it follows that $$\mathscr{I}_\text{GR}\big(\varphi+k\varphi_\mathfrak{a}\big)\subset\frak{b}\cdot\mathscr{I}_\text{GR}\big(\varphi+(k-1)\varphi_\mathfrak{a}\big)\subset
\frak{a}\cdot\mathscr{I}_\text{GR}\big(\varphi+(k-1)\varphi_\mathfrak{a}\big);$$
the proof is concluded.
\hfill $\Box$

\subsection{Proof of Theorem \ref{BS_weakly}}

Firstly, by Theorem \ref{reduction}, it is sufficient to prove the desired result for the case $r\leq n$, i.e., $q=r$. In fact, we then can replace $\mathcal{I}$ by a minimal reduction $\mathcal{J}$ of $\mathcal{I}$, generated by at most $n$ elements, and in further obtain $$\overline{\mathcal{I}^{k+q-1}}=\overline{\mathcal{J}^{k+q-1}}\subset\mathcal{J}^{k}\subset\mathcal{I}^{k}.$$

Let $g=(g_1,\dots,g_r)$ be a system of generators of $\mathcal{I}\subset\widehat{\mathcal{O}}_{X,x}$. As the statement is local, we may assume that all functions $g_1,...,g_r\in\widehat{\mathcal{O}}(X)$ and put $\frak{a}=(g_1,...,g_r)\cdot\widehat{\mathcal{O}}_X$. For any $f\in\overline{\mathcal{I}^{k+q-1}}$, it follows from Corollary \ref{IC_weakly} that $|f|\leq C\cdot|g|^{k+q-1}$ for some constant $C\geq0$ near $x$ on $X$, and then by weak rationality of the point $x$, we deduce that $f\in\mathscr{I}_\text{GR}(\frak{a}^{k+q-1})_x$. Therefore, in order to prove Theorem \ref{BS_weakly}, by Theorem \ref{GR_weakly} it is sufficient to show that $$\mathscr{I}_\text{GR}(\frak{a}^{p})=\frak{a}\cdot\mathscr{I}_\text{GR}(\frak{a}^{p-1})$$
for any integer $p\geq r$ on $X$, which is an immediate consequence of Theorem \ref{SkodaDivision}.
\hfill $\Box$

\subsection{Proof of Theorem \ref{BS_1D}}

Relying on the reduction argument as above, we may consider a principal ideal $\mathcal{J}=(g)\cdot\widehat{\mathcal{O}}_{X,x}\subset\widehat{\mathcal{O}}_{X,x}$ and $f\in\overline{\mathcal{J}}$. Then, it follows from Corollary \ref{IC_weakly} that $f/g\in\widehat{\mathcal{O}}_{X,x}$, which implies that $\overline{\mathcal{J}}\subset\mathcal{J}$, i.e., the original Brian\c{c}on-Skoda theorem holds for the Noetherian ring $\widehat{\mathcal{O}}_{X,x}$.\\

Note that each reduced complex space of dimension one is Cohen-Macaulay, and then it follows that $x\in X$ is a weakly rational singularity if and only if $x\in X$ is a rational singularity if and only if $x\in X$ is a regular point.
\medskip

``$(2)\Longrightarrow(3)$''. If $x\in X$ is a regular point, then $\mathcal{O}_{X,x}$ is a principal ideal domain. Then, by Proposition 1.5.2 in \cite{SH06} (or the original result of Brian\c{c}on and Skoda), it follows that $\overline{\mathcal{I}^k}=\mathcal{I}^k$ for any $k\in\mathbb{N}$.
\medskip

``$(3)\Longrightarrow(2)$''. Suppose that $x\in X$ is a singularity. As the question is local, we may assume that $X$ is a (closed) complex subspace of some domain $\Omega$ in $\mathbb{C}^{N}$, where $N$ is the embedding dimension of $X$ at $x$ (= the origin $o$ of $\mathbb{C}^{N}$). In particular, we have
$$\text{ord}\mathscr{I}_{X,o}:=\min\{\,\text{ord}_{o}(f)\,|\,f\in\mathscr{I}_{X,o}\}\geq2;$$
otherwise, the embedding dimension of $X$ at $o$ is at most $N-1$.

By the Weierstrass Preparation Theorem, in some local coordinates $(z';z'')=(z_1;z_{2},...,z_N)$ near $o$, there exist Weierstrass polynomials
$$P_k(z)=z_k^{m_k}+a_{1k}z_k^{m_k-1}+\cdots+a_{m_kk}\in\mathcal{O}_{k-1}[z_k]\cap\mathscr{I}_{X,o},\ k=2,...,N, \eqno{(*)}$$
with $m_k=\mbox{ord}_oP_k$. Hence, we have
$$a_{jk}(z_1,...,z_{k-1})\in\mathfrak{m}_{k-1}^j,\ 2\leq k\leq N,\ 1\leq j\leq m_k. \eqno{(**)}$$

Denote by $\mathcal{I}$ the ideal in $\mathcal{O}_{X,o}$ generated by the germ of holomorphic function $\widehat{z}_1\in\mathcal{O}_{X,o}$, where $\widehat{z}_k$ are the residue classes of $z_k$ in $\mathcal{O}_{X,o}$. Then, combining $(*)$ with $(**)$, we obtain that the integral closure $\overline{\mathcal{I}}$ of $\mathcal{I}$ in $\mathcal{O}_{X,o}$ is $\mathfrak{m}_{X,o}=(\widehat{z}_1,...,\widehat{z}_{N})\cdot\mathcal{O}_{X,o}$, the maximal ideal of $\mathcal{O}_{X,o}$. Moreover, since ord$\mathscr{I}_{X,o}\geq2$, we have $\widehat{z}_k\not\in\mathcal{I},\ 2\leq k\leq n$. Then, we have $\overline{\mathcal{I}}\not\subset\mathcal{I}$, which contradicts to validity of the original Brian\c{c}on-Skoda theorem for $\mathcal{O}_{X,o}$; i.e., $x\in X$ is a regular point.
\hfill $\Box$

\subsection{Proof of Theorem \ref{BS_2D}}

\emph{Necessity.} It is a direct application of Theorem \ref{BS_weakly}.
\smallskip

\emph{Sufficiency.} Firstly, we show that the desired result holds for the case when $x\in X$ is a normal point. As the question is local, by Artin's algebraization theorem, we may assume that $X$ is a normal, affine algebraic variety over $\mathbb{C}$.

Note the Cohen-Macaulayness of normal complex surfaces, and then we can deduce from Theorem 1.1 in \cite{Shibata17} that if the original Brian\c{c}on-Skoda theorem holds for $\mathcal{O}_{X,x}$, essentially of finite type over $\mathbb{C}$, then $x\in X$ is an algebraic rational singularity, which is actually equivalent to the fact that $x\in X$ is an analytic rational singularity (cf. \cite{K-M98}, Corollary 5.11).

In general, we consider the normalization $\xi:\widehat X\to X$ of $X$. By Proposition \ref{invariance_weakly}, it is enough to present that $\hat x\in\widehat X$ is a weakly rational singularity for any $\hat x\in\xi^{-1}(x)$. Let $\hat{\mathcal{I}}\subset\mathcal{O}_{\widehat X,\hat x}$ be a nonzero ideal with $r$ generators $\hat g_1,...,\hat g_r$. Note that $\widehat{\mathcal{O}}_{X,x}\cong\bigoplus_{j=1}^m\mathcal{O}_{\widehat X,\hat x_j}$, where $\hat x_1,...,\hat x_m$ are the distinct points of a fiber $\xi^{-1}(x)$ and we take $\hat x=\hat x_1$. Then, there exist $g_1,...,g_r\in\widehat{\mathcal{O}}_{X,x}$ such that for any $1\leq j\leq m$, $g_j\circ\xi$ equals to $\hat g_j$ near $\hat x_1$ and equals to one near $\hat x_2,...,\hat x_m$. Denote by $\mathcal{I}$ the ideal in $\widehat{\mathcal{O}}_{X,x}$ generated by $g_1,...,g_r$. Then, by the assumption, we have $\overline{\mathcal{I}^{k+q-1}}\subset\mathcal{I}^k$ for any $k\in\mathbb{N}$. Thanks to Corollary \ref{IC_weakly}, it follows that $\overline{\hat{\mathcal{I}}^{k+q-1}}\subset\hat{\mathcal{I}}^k$, which implies that $\hat x\in X$ is a rational singularity by the initial argument; the proof is complete.
\hfill $\Box$

\appendix
   \renewcommand{\appendixname}{Appendix~\Alph{section}}

\section{Some remarks on Skoda's $L^2$ division theorem} \label{A}

\subsection{Skoda's $L^2$ division theorem: a trivial generalization} \label{Non_division}

For the readers' convenience, here we restate some facts on a trivial generalization of Skoda's $L^2$ division theorem (for holomorphic functions) on singular complex spaces in \cite{Li_BS}. The approach in the arguments is the same as in the author's joint paper \cite{G-L18} (see also \cite{Li_BSOT, Ohsawa_book18}).

\begin{theorem} \emph{(\cite{Li_BS}, Theorem 4.1).} \label{Skoda_negative}
Let $\Omega\subset\mathbb{C}^n\ (n\geq2)$ be a domain, $A\subset\Omega$ an analytic set of pure dimension $d$ through the origin $o$. Then, for small enough ball $B_{r}(o)\subset\Omega$, the $L^2$ division theorem holds on $B_{r}(o)\cap A$ if and only if $o$ is a regular point of $A$.
\end{theorem}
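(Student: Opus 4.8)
The plan is to prove both directions by reducing to the known smooth case and exploiting the explicit local structure near a singular point. For the ``if'' direction: if $o$ is a regular point of $A$, then after shrinking $\Omega$ we may take $B_r(o)\cap A$ biholomorphic to a polydisc in $\mathbb{C}^d$, and the classical Skoda $L^2$ division theorem for holomorphic functions on pseudoconvex domains in $\mathbb{C}^d$ (cf. \cite{De10}, Theorem 11.13) applies directly; the only point to check is that the biholomorphism transports the relevant $L^2$ norms with bounded distortion, which holds on a relatively compact neighborhood.

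For the ``only if'' direction, the strategy is to assume $o$ is a singular point of $A$ and produce a specific ideal and a specific function witnessing the failure of $L^2$ division. First I would embed $A$ minimally, so that $\operatorname{ord}\mathscr{I}_{A,o}\geq 2$, and apply the Weierstrass Preparation Theorem exactly as in the proof of Theorem \ref{BS_1D}: in suitable coordinates $(z_1;z_2,\dots,z_n)$ the ideal $\mathscr{I}_{A,o}$ contains Weierstrass polynomials $P_k(z)=z_k^{m_k}+a_{1k}z_k^{m_k-1}+\cdots+a_{m_k k}$ with $a_{jk}\in\mathfrak{m}_{k-1}^j$. This gives a normalization-type estimate showing that on $B_r(o)\cap A$ one has $|z_k|\leq C|z_1|^{1/m_k}$, so that, after composing with the normalized blow-up (or a resolution) $\pi:\widetilde A\to A$, the function $z_1$ generates an ideal whose integral closure strictly contains $z_1\cdot\mathcal{O}_{A,o}$ while all $z_k$ are integrally dependent on it. The heart of the argument is then an $L^2$-version of this failure: one builds a holomorphic $n$-form (section of $\omega_A$) and uses the Grauert-Riemenschneider comparison together with Theorem \ref{Skoda_negative}'s hypothesis $n\geq 2$ — via Remark \ref{rational}(3), $R^{n-1}\pi_*\mathcal{O}_{\widetilde A}\neq 0$ at $o$ precisely when $o$ fails to be weakly rational — to exhibit a holomorphic $g$ on $B_r(o)\cap A$ such that the $L^2$-divisibility conclusion $f = g h$ with $|h|^2e^{-2\varphi}$ locally integrable fails for some admissible $f$.

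The main obstacle I expect is this last step: cleanly encoding ``failure of $L^2$ division'' so that it is detected by the non-vanishing of the top higher direct image sheaf. The clean route is to dualize: $L^2$ division on $B_r(o)\cap A$ (for the weight $\varphi_\mathfrak{a}$ attached to the generators) is equivalent, via the exact Skoda complex of the proof of Theorem \ref{SkodaDivision}, to the surjectivity of a certain map of direct image sheaves, and the obstruction to that surjectivity lives in $R^j\pi_*$ of a sheaf twisted by $\omega_{\widetilde A}$ and a negative enough line bundle; by Serre–Grothendieck duality this pairs against $H^{n-d}$ or, in the hypersurface-type reduction, against a nonzero local cohomology group exactly when $o$ is singular. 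So the argument reduces to: (1) the Weierstrass normal form above; (2) identifying the relevant obstruction cohomology; (3) invoking $\operatorname{ord}\mathscr{I}_{A,o}\geq 2$ together with $n\geq 2$ to see it is nonzero. Steps (1) and (3) are routine given the machinery already assembled in the paper; step (2) — making the duality bookkeeping precise for a possibly non-normal, non-Cohen–Macaulay $A$ — is where the real work lies, and I would handle it by first treating the complete-intersection case $Z\supset A$ using the free dualizing sheaf $\omega_Z$ as in $(\spadesuit)$, then descending to $A$ via the conductor ideal $\mathfrak{d}_{A,Z}$.
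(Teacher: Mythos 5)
Your ``if'' direction is fine and matches the paper. The ``only if'' direction, however, rests on an obstruction that cannot work. You propose to detect the failure of $L^2$ division through the non-vanishing of $R^{n-1}\pi_*\mathcal{O}_{\widetilde A}$ at $o$, i.e.\ through the failure of weak rationality. But the theorem asserts that division fails at \emph{every} singular point, and there are plenty of singular points that are (weakly) rational --- indeed the whole thrust of the paper is that the Brian\c{c}on--Skoda theorem \emph{does} hold at weakly rational singularities even though the naive $L^2$ division theorem fails there (this is exactly why the Grauert--Riemenschneider machinery is needed). So the cohomological obstruction you want to pair against by duality vanishes for, say, an ordinary double point, while the statement you are trying to prove says division still fails at such a point. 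Step (2) of your plan is therefore not merely ``where the real work lies''; it targets the wrong invariant. A secondary issue: your Weierstrass reduction tests division against the single element $z_1$, which is adapted to the curve case of Theorem \ref{BS_1D}; for $\dim A=d$ one must divide by $d$ elements, since a single generic linear form never integrally generates the maximal ideal when $d\geq 2$.

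The paper's actual argument is far more elementary and purely measure-theoretic. By the local parametrization theorem one chooses coordinates $(z';z'')=(z_1,\dots,z_d;z_{d+1},\dots,z_n)$ with $|z''|\leq C|z'|$ on $A$ near $o$, and lets $\mathcal{I}=(\widehat z_1,\dots,\widehat z_d)\subset\mathcal{O}_{A,o}$. Singularity of $o$ forces the embedding dimension $\dim_{\mathbb{C}}(\mathfrak m_{A,o}/\mathfrak m_{A,o}^2)\geq d+1$, so some $\widehat z_{k_0}$ ($k_0>d$) lies outside $\mathcal{I}$. The volume estimate of Lemma \ref{finiteness} then shows
$$\int_{U\cap A}\frac{|z_{k_0}|^2}{|z'|^{2(d+\varepsilon)}}\,dV_A<+\infty,$$
so the integrability hypothesis of the $L^2$ division theorem (with $\varphi=0$, divisors $\widehat z_1,\dots,\widehat z_d$) is satisfied; its conclusion would put $\widehat z_{k_0}$ in $\mathcal{I}$, a contradiction. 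No resolution, no direct images, no duality. If you want to salvage your approach, you would have to replace ``weakly rational'' by an invariant that genuinely characterizes regularity, at which point you are back to the embedding-dimension count the paper already uses.
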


\begin{remark}
In addition, via an argument on the integral closure of ideals as in the proof of Theorem 1.3 in \cite{G-L18}, we are able to deduce the following:

Let $\Omega\subset\mathbb{C}^n\ (n\geq2)$ be a domain and $A\subset\Omega$ an analytic set of pure dimension $d$. Then, the $L^2$ division of bounded holomorphic sections will imply that
$$\text{ord}\mathscr{I}_{A,x}:=\min\{\,\mbox{\text{ord}}_x(f)\,|\,f\in\mathscr{I}_{A,x}\,\}\leq d,$$
for every point $x\in A$.
\end{remark}

Firstly we state a basic estimate of the volume of an analytic subset as follows.

\begin{lemma} \emph{(\cite{G-L18}, Lemma 2.3).} \label{finiteness}
Let $\Omega\subset\mathbb{C}^n\ (n\geq2)$ be a domain and $A\subset\Omega$ an analytic set of pure dimension $d$ through the origin $o$. Then, there exists a neighborhood $U$ of $o$ such that, for any $0\leq\varepsilon<1$,
$$\int_{U{\cap}A}\frac{1}{(|z_1|^2+\cdots+|z_n|^2)^{d+\varepsilon-1}}dV_A<+\infty,$$
where $dV_A=\frac{1}{d!}\omega^d|_{A_\emph{reg}}\ \text{and}\ \omega=\frac{\sqrt{-1}}{2}\sum\limits_{k=1}^{n}dz_k{\wedge}d\bar{z}_k$.
\end{lemma}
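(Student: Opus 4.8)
The final statement to prove is Lemma~\ref{finiteness}, the volume estimate
\[
\int_{U\cap A}\frac{1}{(|z_1|^2+\cdots+|z_n|^2)^{d+\varepsilon-1}}\,dV_A<+\infty
\]
for an analytic set $A$ of pure dimension $d$ through the origin. The plan is to reduce to the case where $A$ is given by a branched covering over a $d$-dimensional polydisc, then compare the integrand with a pullback of a known-integrable weight on $\mathbb{C}^d$, and finally estimate using the local boundedness of the fibers of the projection.

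First I would invoke the local structure theory of analytic sets: after a generic linear change of coordinates, for a sufficiently small neighborhood $U=\Delta'\times\Delta''\subset\mathbb{C}^d\times\mathbb{C}^{n-d}$ of $o$, the projection $p:A\cap U\to\Delta'$ onto the first $d$ coordinates $z'=(z_1,\dots,z_d)$ is a proper, finite, surjective branched covering of some sheet number $s$, with the branch locus a proper analytic subset of $\Delta'$. Shrinking further, we may assume $|z''|\le C|z'|$ on $A\cap U$ (the coordinates $z_{d+1},\dots,z_n$ vanish at $o$ along $A$, and after the generic choice their vanishing is controlled by $|z'|$; this is where the Weierstrass polynomial bound $a_{jk}\in\mathfrak{m}^j$ enters). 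Consequently $|z_1|^2+\cdots+|z_n|^2$ is comparable, on $A\cap U$, to $|z'|^2=|z_1|^2+\cdots+|z_d|^2$, so it suffices to bound
\[
\int_{U\cap A}\frac{1}{|z'|^{2(d+\varepsilon-1)}}\,dV_A.
\]

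Next I would push the integral down to $\Delta'$. Since $p$ restricted to $A_{\text{reg}}$ (away from the branch locus) is a local biholomorphism onto its image and the branch locus has measure zero, the coarea/change-of-variables formula gives that $dV_A$ pushes forward to a measure on $\Delta'$ that is pointwise at most $(1+C')^{d}\,s\cdot dV_{\Delta'}$ for a constant depending on the Lipschitz bound above, because the graphing functions $z''=h(z')$ have bounded derivatives away from the branch locus — in fact $|z'|$-Lipschitz bounds on the $h$'s suffice to control the Jacobian of the graph map. Thus
\[
\int_{U\cap A}\frac{dV_A}{|z'|^{2(d+\varepsilon-1)}}\le C''\int_{\Delta'}\frac{dV_{\Delta'}}{|z'|^{2(d+\varepsilon-1)}},
\]
and the right-hand integral is finite precisely because, writing $|z'|^2$ in $\mathbb{C}^d$, the singularity $|z'|^{-2(d-1+\varepsilon)}$ has exponent strictly less than $2d$ when $\varepsilon<1$; this is the classical integrability of $|z'|^{-2\lambda}$ on a ball in $\mathbb{C}^d$ for $\lambda<d$, i.e. the Skoda-type criterion recalled in Remark~\ref{Skoda}.

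The main obstacle is the second step: justifying that the projection $p$ can be chosen so that $dV_A$ has bounded density with respect to $dV_{\Delta'}$, i.e. controlling the tilt of the tangent planes of $A$ relative to the horizontal $\mathbb{C}^d$-direction uniformly near $o$. Away from the regular locus this is automatic, but one must check that the bad (branch) set contributes nothing — which follows since $A_{\text{sing}}$ together with the ramification locus is a proper analytic subset, hence of $2d$-dimensional Hausdorff measure zero in $A$, so it is negligible for the integral. A clean way to organize this is to use the standard fact that $A\cap U$ carries a finite $2d$-dimensional volume (Lelong), that $p$ is volume-nonincreasing up to a fixed constant after the generic coordinate choice, and then apply Fubini on $\Delta'$. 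I would assemble these ingredients and cite \cite{G-L18}, \cite{GR84}, or \cite{Chirka} for the branched-covering normal form and the volume comparison, keeping the explicit estimates to a minimum.
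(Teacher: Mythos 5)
Your reduction to the weight $|z'|^{-2(d+\varepsilon-1)}$ via $|z''|\le C|z'|$ is fine, and the exponent count at the end is correct, but the pivotal step --- that after a generic linear change of coordinates the pushforward $p_*(dV_A)$ under the branched covering $p:A\cap U\to\Delta'$ has density bounded by a constant times $dV_{\Delta'}$ --- is false in general, and the measure-zero remark about the ramification locus does not repair it. Writing the sheets locally as graphs $z''=h(z')$, one has $dV_A=\frac{1}{d!}(\omega'+h^*\omega'')^d$, so the density involves the derivatives of $h$, and these blow up along the branch locus whenever the covering is genuinely branched (which it always is when $o$ is a singular point of $A$). Concretely, for the quadric cone $A=\{w^2=xy\}\subset\mathbb{C}^3$ with $d=2$ and $p(x,y,w)=(x,y)$, the density of $p_*(dV_A)$ behaves like $1+\tfrac14\left(|y/x|+|x/y|\right)$, which is unbounded near the coordinate axes; it is locally integrable, but that is exactly what you may no longer assume once you multiply by the singular weight $|z'|^{-2(d+\varepsilon-1)}$, whose pole set meets the branch locus at $o$. (The lemma is still true in this example --- a direct computation in quasi-polar coordinates gives $\int\rho^{1-2\varepsilon}\,d\rho<\infty$ --- but your argument does not prove it.) Likewise, ``$A\cap U$ has finite $2d$-volume'' is too weak: finiteness of $\int dV_A$ says nothing against a weight blowing up like $|z|^{-2(d+\varepsilon-1)}$ at a point of $A$.

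The paper itself gives no proof here; it states that the lemma is proved exactly as Lemma 2.3 of \cite{G-L18}. The standard route, which closes your gap without ever controlling the tilt of tangent planes, is the quantitative volume bound coming from the Lelong--Thie monotonicity theorem: for a pure $d$-dimensional analytic set through $o$, the ratio $r\mapsto\mathrm{vol}_{2d}(A\cap B_r)/r^{2d}$ is nondecreasing, hence $\mathrm{vol}_{2d}(A\cap B_r)\le Cr^{2d}$ for $r\le r_0$. Decomposing $U\cap A$ into dyadic annuli $A\cap(B_{2^{-k}r_0}\setminus B_{2^{-k-1}r_0})$, on which the integrand is at most $(2^{-k-1}r_0)^{-2(d+\varepsilon-1)}$, one gets
\begin{equation*}
\int_{U\cap A}\frac{dV_A}{|z|^{2(d+\varepsilon-1)}}\le C'\sum_{k\ge0}2^{-2k(1-\varepsilon)}r_0^{2-2\varepsilon}<+\infty
\end{equation*}
precisely when $\varepsilon<1$. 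I would replace your coarea/density step with this estimate (or with a Crofton-type average over projections, which achieves the same effect); the rest of your outline can then be discarded, since the reduction to $|z'|$ is no longer needed.
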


The above statement is in fact slightly stronger than Lemma 2.3 in \cite{G-L18}, but it is proved in the same way. As a result, with such a minor modification, it turns out that it is not necessary to use the strong openness of multiplier ideal sheaves associated to plurisubharmonic functions in the proof of the main result, Theorem 1.2, in \cite{G-L18}.\\

\noindent{\textbf{\emph{Proof of Theorem} \ref{Skoda_negative}.}} The sufficiency is a straightforward application of Skoda's $L^2$ division theorem (see \cite{De10}, Theorem 11.13) on $d$-dimensional balls centered at the origin in $\mathbb{C}^d$. Thus, we only need to prove the necessity.\\

Suppose that $o$ is a singular point of $A$ with $1\leq\dim_oA=d\leq n-1$. Thanks to the local parametrization theorem for analytic sets, we can find a local coordinate system $$(z';z'')=(z_1,...,z_d;z_{d+1},...,z_n)$$ near $o$ such that for some constant $C>0$, we have $|z''|\leq C\cdot|z'|$ for any $z\in A$ near $o$.

Let $\mathcal{I}\subset\mathcal{O}_{A,o}$ be the ideal generated by holomorphic functions $\widehat{z}_1,...,\widehat{z}_{d}\in\mathcal{O}_{A,o}$, where $\mathcal{O}_A=\mathcal{O}_{\Omega}/\mathscr{I}_A\big|_A$ and $\widehat{z}_k$ are the residue classes of $z_k$ in $\mathcal{O}_{A,o}$. From the non-smoothness of $A$ at the origin $o$, we deduce that the embedding dimension $\dim_{\mathbb{C}}(\mathfrak{m}_{A,o}/\mathfrak{m}^2_{A,o})\geq d+1$ of $A$ at $o$, which implies that there exists $d+1\leq k_0\leq n$ such that $\widehat{z}_{k_0}\not\in\mathcal{I}$.\\

On the other hand, since $|z''|\leq C\cdot|z'|$ for any $z\in A$ near $o$, we infer that
\begin{center}
$|z_{k_0}|^2\leq C^2\cdot|z'|^2$ and $\frac{|z|^2}{1+C^2}\leq|z'|^2$
\end{center}
on $U\cap A$ for some neighborhood $U$ of $o$. Thus, by Lemma \ref{finiteness}, for some smaller neighborhood $U$ of $o$ and any $0<\varepsilon<1$, it follows that
$$\int_{U{\cap}A}\frac{|{z}_{k_0}|^2}{|z'|^{2(d+\varepsilon)}}dV_A\leq C^2(1+C^2)^{d+\varepsilon-1}\cdot\int_{U{\cap}A}|z|^{-2(d+\varepsilon-1)}dV_A<+\infty.$$

Take a small ball $B_r(o)\subset U$ and apply the $L^2$ division theorem with $\varphi=0$ on $B_r(o)\cap A$. Then, there exist holomorphic functions $h_k\in\mathcal{O}(B_r(o)\cap A)$ such that $$\widehat{z}_{k_0}=h_1{\cdot}\widehat{z}_1+\cdots+h_d{\cdot}\widehat{z}_d$$ on $B_r(o)\cap A$, which contradicts that $\widehat{z}_{k_0}$ is not in $\mathcal{I}$. Thus the proof is concluded.
\hfill $\Box$

\subsection{An alternative argument on the proof of Theorem \ref{BS_weakly}}

Analogous to the arguments as in \cite{Li_BS}, we can also give an alternative proof of Theorem \ref{BS_weakly} for $1$-Gorenstein case, based on a version of Skoda's $L^2$ division theorem for holomorphic sections on complete K\"ahler manifolds (see \cite{De10}, Theorem 11.8), i.e.,

\begin{theorem} \label{De-Skoda}
Let $X$ be a complete K\"ahler manifold equipped with a K\"ahler metric $\omega$ on $X$, $\texttt{g}:E\to Q$ a surjective morphism of hermitian vector bundles and $L\to X$ a hermitian line bundle. Set $$r=\emph{rk}\ E,\ q=\emph{rk}\ Q,\ m=\min\{n-k,r-q\}$$ and suppose $E\geq_m0$,
$$\sqrt{-1}\Theta(L)-(m+\varepsilon)\sqrt{-1}\Theta(\det\ Q)\geq0$$
for some $\varepsilon>0$. Then for every $D''$-closed form $f$ of type $(n,k)$ with values in $Q\otimes L$ such that
$$I=\int_X\langle\widetilde{\texttt{gg}^\star}f,f\rangle(\det\texttt{gg}^\star)^{-m-1-\varepsilon}dV_\omega<+\infty,$$
there exists a $D''$-closed form $h$ of type $(n,k)$ with values in $E\otimes L$ such that $f=\texttt{g}\cdot h$ and
$$\int_X|h|^2(\det\texttt{gg}^\star)^{-m-\varepsilon}dV_\omega\leq(1+m/\varepsilon)I.$$
\end{theorem}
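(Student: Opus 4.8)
The plan is to reduce the division problem $f=\texttt{g}\cdot h$ to a $\bar\partial$-equation on the holomorphic subbundle $S:=\ker\texttt{g}\subset E$. Since $\texttt{gg}^\star$ is an invertible positive endomorphism of $Q$, the $C^\infty$ section $\sigma:=\texttt{g}^\star(\texttt{gg}^\star)^{-1}f$ of $\Lambda^{n,k}T_X^\star\otimes E\otimes L$ satisfies $\texttt{g}\sigma=f$ and takes values in the pointwise orthogonal complement $S^\perp=\mathrm{Im}(\texttt{g}^\star)$; moreover, as $f$ is $D''$-closed, $\texttt{g}(D''\sigma)=D''f=0$, so $w:=D''\sigma$ is a $D''$-closed $(n,k+1)$-form with values in $S\otimes L$. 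If one can find an $(n,k)$-form $u$ valued in $S\otimes L$ solving $D''u=-w$, then $h:=\sigma+u$ is $D''$-closed with $\texttt{g}h=f$; since $\sigma$ and $u$ are pointwise orthogonal (values in $S^\perp$ resp. $S$), Pythagoras gives $\int_X|h|^2(\det\texttt{gg}^\star)^{-m-\varepsilon}dV_\omega=I+\int_X|u|^2(\det\texttt{gg}^\star)^{-m-\varepsilon}dV_\omega$, using the adjugate identity $\widetilde{\texttt{gg}^\star}=(\det\texttt{gg}^\star)(\texttt{gg}^\star)^{-1}$, which yields $\langle\widetilde{\texttt{gg}^\star}f,f\rangle(\det\texttt{gg}^\star)^{-m-1-\varepsilon}=|\sigma|^2(\det\texttt{gg}^\star)^{-m-\varepsilon}$ and hence $I=\int_X|\sigma|^2(\det\texttt{gg}^\star)^{-m-\varepsilon}dV_\omega$. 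Thus it suffices to produce such a $u$ with $\int_X|u|^2(\det\texttt{gg}^\star)^{-m-\varepsilon}dV_\omega\le(m/\varepsilon)\,I$.

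For this I would invoke the Andreotti--Vesentini/H\"ormander $L^2$-existence theorem for $\bar\partial$ on the complete K\"ahler manifold $(X,\omega)$: equip $S\otimes L$ with the Hermitian metric twisted by the weight $(\det\texttt{gg}^\star)^{-m-\varepsilon}$, let $B=[\sqrt{-1}\Theta,\Lambda_\omega]$ be the associated curvature operator on $(n,k+1)$-forms, and solve $D''u=-w$ with the estimate $\int_X|u|^2(\det\texttt{gg}^\star)^{-m-\varepsilon}dV_\omega\le\int_X\langle B^{-1}w,w\rangle(\det\texttt{gg}^\star)^{-m-\varepsilon}dV_\omega$, once one knows $B>0$ on such forms and the right-hand integral is finite. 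Completeness of $\omega$ is used in the standard way to reduce to compactly supported forms and to kill the boundary term. The curvature of $S\otimes L$ with this metric is assembled from three contributions: $\sqrt{-1}\Theta(L)$; the term $(m+\varepsilon)\sqrt{-1}\partial\bar\partial\log\det\texttt{gg}^\star$ coming from the weight, which (up to the second fundamental form of $\texttt{g}$) equals $-(m+\varepsilon)\sqrt{-1}\Theta(\det Q)$ plus curvature-of-$E$ terms; and the curvature of the subbundle $S\hookrightarrow E$, given by Gauss--Codazzi as $\sqrt{-1}\Theta(E)|_S$ minus a semipositive second-fundamental-form contribution. The hypotheses $E\ge_m0$ and $\sqrt{-1}\Theta(L)-(m+\varepsilon)\sqrt{-1}\Theta(\det Q)\ge0$ are exactly what make this combination Nakano-positive when contracted against $(n,k+1)$-forms; the normalization $m=\min\{n-k,r-q\}$ enters because the $m$-semipositivity of $E$ only controls sums of at most $m$ Nakano contractions, and $n-k$, $r-q$ are the two ways in which the number of contracted terms is bounded.

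The technical heart is then the pointwise \emph{division inequality}
$$\langle B^{-1}w,w\rangle\le\frac{m}{\varepsilon}\,\big\langle(\texttt{gg}^\star)^{-1}f,f\big\rangle,$$
which after multiplication by $(\det\texttt{gg}^\star)^{-m-\varepsilon}$ and integration gives $\int_X\langle B^{-1}w,w\rangle(\det\texttt{gg}^\star)^{-m-\varepsilon}dV_\omega\le(m/\varepsilon)\,I<+\infty$. To prove it I would express $w=D''\sigma$ explicitly through the second fundamental form $\beta$ of $S\subset E$, so that $w$, as an $S\otimes L$-valued form, is essentially $\bar\beta^\star$ applied to $f$, and then estimate $\langle B^{-1}w,w\rangle$ against the curvature lower bound obtained above; the $m$-semipositivity of $E$ is used precisely to dominate the "bad" cross-terms involving $\beta$ by the "good" curvature terms, and the sharp constant $m/\varepsilon$ drops out of this linear-algebra estimate. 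With the inequality established, the $L^2$-theorem produces $u$, and $h=\sigma+u$ satisfies $D''h=0$, $\texttt{g}h=f$, and $\int_X|h|^2(\det\texttt{gg}^\star)^{-m-\varepsilon}dV_\omega=I+\|u\|^2\le(1+m/\varepsilon)\,I$, as required.

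The step I expect to be the main obstacle is this curvature/linear-algebra core: verifying that the twisted curvature operator $B$ is positive on $(n,k+1)$-forms valued in $S\otimes L$ and establishing the division inequality with the optimal constant $m/\varepsilon$, where the interaction of the second fundamental form of $\texttt{g}$, the $m$-semipositivity of $E$, and the cutoff $\min\{n-k,r-q\}$ must all be tracked simultaneously. Everything else---the Hilbert-space reduction, the appeal to completeness, and the bookkeeping of the weight $(\det\texttt{gg}^\star)^{-m-\varepsilon}$---is routine once this computation is in place.
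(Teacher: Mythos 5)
Your outline is the standard Skoda argument, and it matches the source the paper itself relies on: the paper does not prove this theorem but quotes it verbatim from Demailly's book (Theorem 11.8 there), whose proof proceeds exactly as you describe --- minimal lifting $\sigma=\texttt{g}^\star(\texttt{gg}^\star)^{-1}f$, reduction to $D''u=-D''\sigma$ with values in $\ker\texttt{g}\otimes L$, and the pointwise curvature/second-fundamental-form inequality giving the constant $m/\varepsilon$. The technical core you flag (Nakano positivity of the twisted operator and the division inequality) is precisely Demailly's Lemmas 11.5--11.7, so your sketch is correct and essentially the same route.
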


\begin{theorem}
Let $X$ be a complex space of pure dimension $n$ with $x\in X$ a singularity, and $\mathcal{I}\subset\widehat{\mathcal{O}}_{X,x}$ a nonzero ideal with $r$ generators. If $x\in X$ is a weakly rational singularity and $X$ is $1$-Gorenstein near $x$, then the original Brian\c{c}on-Skoda theorem holds for the Noetherian ring $\widehat{\mathcal{O}}_{X,x}$.
\end{theorem}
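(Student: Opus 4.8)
The plan is to reduce, exactly as in the proof of Theorem \ref{BS_weakly}, to the principal-part estimate and then run a Skoda $L^2$-division argument on a resolution, replacing the coherent-sheaf Skoda complex of Theorem \ref{SkodaDivision} by the analytic division Theorem \ref{De-Skoda}. First I would invoke Theorem \ref{reduction} to pass to a minimal reduction $\mathcal{J}\subset\mathcal{I}$ generated by at most $n$ elements, so that it suffices to treat the case $r\leq n$, i.e. $q=r$, and to prove $\overline{\mathcal{I}^{k+q-1}}\subset\mathcal{I}^k$. Picking generators $g=(g_1,\dots,g_r)$ of $\mathcal{I}$, defined on a neighborhood of $x$, and writing $\mathfrak{a}=(g_1,\dots,g_r)\cdot\widehat{\mathcal{O}}_X$, for $f\in\overline{\mathcal{I}^{k+q-1}}$ Corollary \ref{IC_weakly} gives $|\boldsymbol{\mathit{u}}\cdot f|\leq C|\boldsymbol{\mathit{u}}|\cdot|g|^{k+q-1}$ near $x$; since $x$ is weakly rational and $X$ is $1$-Gorenstein, a local generator $\sigma$ of $\omega_X$ pulls back to a holomorphic $n$-form on a resolution $\pi:\widetilde X\to X$, so $f\in\mathscr{I}_\text{GR}(\mathfrak{a}^{k+q-1})_x$ and $f\cdot\sigma$ defines a holomorphic section of $\omega_X$ near $x$. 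By Theorem \ref{GR_weakly} it remains to show $\mathscr{I}_\text{GR}(\mathfrak{a}^p)=\mathfrak{a}\cdot\mathscr{I}_\text{GR}(\mathfrak{a}^{p-1})$ for $p\geq r$; the nontrivial inclusion is ``$\subset$''.

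For that inclusion I would work on $\widetilde X$. Using the $1$-Gorenstein hypothesis, $(\spadesuit)$ simplifies to $\mathscr{I}_\text{GR}(\psi)=\pi_*\big(\omega_{\widetilde X}\otimes\pi^*\omega_X^{-1}\otimes\mathscr{I}(\psi\circ\pi)\big)$ for $\psi\in\text{Psh}$, and $\omega_X\cong\mathcal{O}_X$ locally, so a section of $\mathscr{I}_\text{GR}(\mathfrak{a}^p)$ near $x$ corresponds to a holomorphic $n$-form $F$ on $\pi^{-1}(U)$ with $|F|\leq C|g\circ\pi|^p$. Choosing $\pi$ to be a log resolution of $\mathfrak{a}$ so that $\mathfrak{a}\cdot\mathcal{O}_{\widetilde X}=\mathcal{O}_{\widetilde X}(-F_0)$ for an effective divisor $F_0$, and shrinking $U$ to a Stein (hence, after a standard modification of the metric, complete Kähler) neighborhood, I would set up Theorem \ref{De-Skoda} with $E=\mathcal{O}_{\widetilde X}^{\oplus r}\to Q=\mathcal{O}_{\widetilde X}$ the morphism given by $(g_k\circ\pi)$, $L$ carrying the weight $(p-1)\varphi_\mathfrak{a}\circ\pi$ plus a large convex function of the exhaustion to force the curvature positivity $\sqrt{-1}\Theta(L)-(m+\varepsilon)\sqrt{-1}\Theta(\det Q)\geq0$ (here $Q$ is a line bundle so $\det Q=Q$ is essentially trivial), and apply it to the $(n,0)$-form $F$ twisted appropriately. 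The hypothesis $|F|\leq C|g\circ\pi|^p$ is precisely what makes $I=\int\langle\widetilde{\texttt{gg}^\star}F,F\rangle(\det\texttt{gg}^\star)^{-m-1-\varepsilon}dV_\omega<+\infty$, with $m=\min\{n,r\}-1$, so the theorem produces holomorphic $n$-forms $h_k$ on $\pi^{-1}(U)$ with $F=\sum_k (g_k\circ\pi)\,h_k$ and $|h_k|\leq C'|g\circ\pi|^{p-1}$. Pushing down, the $h_k$ descend to sections of $\mathscr{I}_\text{GR}(\mathfrak{a}^{p-1})$ near $x$ by $(\spadesuit)$ and weak holomorphicity, whence $F\in\mathfrak{a}\cdot\mathscr{I}_\text{GR}(\mathfrak{a}^{p-1})_x$, as desired.

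The main obstacle is the analytic bookkeeping at the boundary of the construction of $U$: one must arrange simultaneously that $\pi^{-1}(U)$ admits a complete Kähler metric (to legitimately apply Theorem \ref{De-Skoda}, which needs completeness), that the plurisubharmonic weight $\varphi_\mathfrak{a}\circ\pi$ together with the added convex exhaustion term has the required $(m+\varepsilon)$-positivity relative to $\det Q$ while not destroying the finiteness of the integral $I$, and that the resulting $L^2$ solutions $h_k$ are genuinely \emph{holomorphic} $n$-forms (not merely $L^2$), which follows since they are $D''$-closed $(n,0)$-forms but still must be checked to extend/descend correctly across $\text{Ex}(\pi)$. A secondary subtlety is the passage from $\omega_X$ to $\mathcal{O}_X$ and back, i.e. keeping track of the isomorphism $\omega_X\cong\mathcal{O}_X$ near $x$ and the fact that $f\cdot\sigma$, rather than $f$, is the object to which the division theorem is applied; once one is careful that a holomorphic section of $\omega_X$ pulls back (by weak rationality) to an honest holomorphic $n$-form, this is routine. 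Modulo these points the argument is parallel to the proof of Theorem \ref{BS_weakly} with Theorem \ref{SkodaDivision} replaced by its pointwise $L^2$ incarnation.
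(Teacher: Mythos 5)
Your strategy coincides with the one the paper actually intends here: for this appendix theorem the paper records no proof beyond the remark that it follows, analogously to the arguments of \cite{Li_BS}, from the complete-K\"ahler version of Skoda's $L^2$ division theorem (Theorem \ref{De-Skoda}) applied on a resolution, with the $1$-Gorenstein hypothesis used to trivialize $\omega_X$ locally and weak rationality used to make $\pi^*\sigma$ an honest holomorphic $n$-form. So your outline is the intended argument, and the reduction steps (minimal reductions, Corollary \ref{IC_weakly}, Theorem \ref{GR_weakly}) are exactly those of the proof of Theorem \ref{BS_weakly}. Two points in your write-up are misstated, though neither is fatal. First, membership in $\mathscr{I}_{\text{GR}}(\mathfrak{a}^p)$ is a weighted $L^2$ condition, not the pointwise bound $|F|\le C|g\circ\pi|^p$; likewise Theorem \ref{De-Skoda} returns only the weighted $L^2$ estimate $\int|h|^2(\det\texttt{gg}^\star)^{-m-\varepsilon}\,dV_\omega<+\infty$, not $|h_k|\le C'|g\circ\pi|^{p-1}$. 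Fortunately the $L^2$ estimate is precisely what membership of the quotients in $\mathscr{I}_{\text{GR}}(\mathfrak{a}^{p-1})$ requires, so the descent survives once phrased correctly. Second, the hypothesis of Theorem \ref{De-Skoda} asks for finiteness of the integral with exponent $m+1+\varepsilon$, i.e.\ effectively $|g\circ\pi|^{-2(p+\varepsilon)}$ rather than $|g\circ\pi|^{-2p}$: for the first division this is covered by the pointwise bound $|f|\le C|g|^{k+q-1}$ from integral closure together with local integrability of $|g|^{-2\varepsilon}$, but to iterate you must either invoke strong openness (Proposition \ref{property}, applied upstairs to the plurisubharmonic function $\varphi_{\mathfrak{a}}\circ\pi$) or use the ideal-theoretic form of Skoda's theorem with the extra power $l$ built in, which lands in $\mathcal{I}^k$ in one stroke. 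With those repairs your argument is the one the paper has in mind.
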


\vspace{.1in} {\em Acknowledgements}. The author would like to sincerely thank Professor Xiangyu Zhou for guiding to the subject on Brian\c{c}on-Skoda theorem, as well as his generous support and encouragements.

\end{document}